\newcommand{\ai}{\text{\hglue 0.06 cm}
 {\mathchoice
  { \includegraphics[height = 1.2ex]{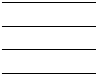}}
  { \includegraphics[height = 1.2ex]{li1}}
  { \includegraphics[height = 1.2ex]{li1}}
  { \includegraphics[height = 1.2ex]{li1}}
 }\text{\hglue 0.06 cm}}
\newcommand*{\Scale}[2][4]{\scalebox{#1}{\ensuremath{#2}}}%
\colorlet{mygray}{gray!60}
\def\BO#1{{\ov{\bom{#1}}}}
\def\oA{{\overline{A}}}
\def\oB{{\overline{B}}}
\def\oC{{\overline{C}}}
\def\Rev#1{{\,{-}#1}}
\def\sphere{{\mathbb S^{2}}}
\def\bom#1{{\boldsymbol{#1}}}
\def\b1{{\bom{1}}}
\def\b2{{\bom{2}}}
\def\perm#1{{\llbracket{#1}\rrbracket}}
\def\cycper#1{{\llparenthesis{\,}{#1}{\,}\rrparenthesis}}
\def\rotvU#1#2#3{{\text{\rm rot}_{#1}(#2,#3)}}
\def\rotvU#1#2#3{{\circlearrowright{\text{\hglue -0.05cm}}({#1},{#2};{#3})}}
\def\edges#1#2{{ {\boldsymbol{#1}}{\boldsymbol{:}}{\boldsymbol{#2}} }}
\def\edges#1#2{{#1}{\ai}{#2}}
\def\Edges#1{{\ai}{#1}{\ai}}
\def\Edges#1{{}{#1}{\ai}}
\def\ov#1{{\overline{#1}}}
\def\blueit#1{\textcolor{black}{#1}}
\def\hh{{\mathcal H}}
\def\b#1{{\mathbf{#1}}}
\def\outer{convex\xspace}
\def\Outer{Convex\xspace}
\def\outerity{convexity\xspace}
\def\outer{\blueit{1-page}\xspace}
\def\Outer{\blueit{1-page}\xspace}
\def\outerity{\blueit{1-pageness}\xspace}
\def\outerd{{\blueit{drawing}}}
\newtheorem{theorem}{Theorem} 
\newtheorem{theorem*}{Theorem} 
\newtheorem{proposition}[theorem]{Proposition}
\newtheorem{corollary}[theorem]{Corollary}
\newtheorem{lemma}[theorem]{Lemma}
\newtheorem{remark}[theorem]{Remark}
\theoremstyle{definition}
\newtheorem{observation}[theorem]{Observation}
\newtheorem{definition}{Definition}
\begin{document}


\title{The unavoidable drawings of complete multipartite graphs}
\author{J\'ozsef Balogh\thanks{University of Illinois Urbana-Champaign, 1409 W. Green Street, Urbana IL 61801, United States, and Extremal Combinatorics and Probability Group (ECOPRO), Institute for Basic Science (IBS), Daejeon, South Korea. {\em E-mail:} {\tt jobal@illinois.edu}}  \and Irene Parada\thanks{Polytechnic University of Catalonia. Barcelona, Spain. {\em E-mail:} {\tt irene.parada@upc.edu}} \and Gelasio Salazar\thanks{Instituto de F\'\i sica, Universidad Aut\'onoma de San Luis Potos\'{\i}. San Luis Potos\'{\i}, Mexico. {\em E-mail:} {\tt gelasio.salazar@uaslp.mx}}}


\maketitle

\begin{abstract}
In a simple drawing of a graph every pair of edges intersect each other in at most one point, which is either a common endvertex or a proper crossing. For each positive integer $n$, Negami identified a drawing $B_n$ of the complete bipartite graph $K_{n,n}$, and proved that if $N$ is sufficiently large, then every drawing of $K_{N,N}$ contains a drawing of $K_{n,n}$ weakly isomorphic to $B_n$. Thus $B_n$ is (up to weak isomorphism) the only {\em unavoidable} drawing of $K_{n,n}$. We extend this result to complete multipartite graphs, characterizing their unavoidable drawings. 
\end{abstract}

\section{Introduction}\label{sec:introduction}

In a {\em simple} drawing of a graph on the plane or on the sphere (i) vertices are represented by distinct points; (ii) each edge is represented by a Jordan curve whose endpoints are its corresponding endvertices; (iii) no vertex lies in the interior of an edge; (iv) every pair of edges intersect each other at most once, either at a common endvertex or at a crossing; and (v) no three edges have a common crossing. The study of simple drawings, also known as \emph{good drawings} and as \emph{simple topological graphs}, 
and their subdrawings, has attracted and continues to attract significant interest~\cite{shootingstars,twisted,fulekvargas,
kynclimproved,pst,tangled-thrackle,sukzeng}.

All drawings under consideration are implicitly assumed to be simple. Also, all drawings considered are hosted on the sphere $\sphere$. Working on the sphere instead of on the plane simplifies the arguments at certain points, and our main results clearly also hold for drawings on the plane.

Let $D$ be a drawing of a graph $G$. If $H$ is a subgraph of $G$, then $D$ naturally induces a drawing of $H$, simply by removing from $D$ all the vertices and edges not in $H$. This is a {\em subdrawing} of $D$. Subdrawings {\em induced} by sets of edges or vertices occur often throughout this work. If $E$ is a set of edges of $G$, then $D[E]$ denotes the subdrawing of $D$ that contains the edges in $E$ and their endvertices. \blueit{Similarly, if $U$ is a set of vertices of $G$, then $D[U]$ denotes the subdrawing of $D$ that contains $U$ and the edges that have both endvertices in $U$.}



We recall that two drawings $D_1,D_2$ of a graph are {\em weakly isomorphic} if there is an incidence-preserving bijection between the drawings such that two edges cross each other in $D_1$ if and only if their images in $D_2$ cross each other. This notion is indeed weaker than isomorphism: $D_1$ and $D_2$ are {\em isomorphic} if there is a self-homeomorphism of $\sphere$ that takes $D_1$ to $D_2$.

\subsection{Background: the unavoidable drawings of $K_n$ and $K_{n,n}$} Pach, Solymosi and T\'oth~\cite{pst} identified for each integer $n>0$ two drawings $C_n$ and $T_n$ of the complete graph $K_n$ (see Figure~\ref{fig:harborth}), and proved that $C_n$ and $T_n$ are the only ``unavoidable'' drawings of $K_n$, in the following sense. 

\begin{theorem}[The unavoidable drawings of $K_n$~\cite{pst}]\label{thm:pst}
Let $n >0$ be an integer. If $N$ is sufficiently large, then every drawing of $K_N$ contains a drawing of $K_n$ weakly isomorphic to $C_n$ or $T_n$.
\end{theorem}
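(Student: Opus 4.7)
My plan is to prove Theorem~\ref{thm:pst} via a Ramsey-theoretic argument applied to the local crossing structure of the drawing. The pivotal observation is that any simple drawing of $K_4$ has at most one crossing, so there are exactly four possible crossing patterns on a $K_4$-subdrawing: no crossing, or exactly one of the three pairs of disjoint edges crosses. Since weak isomorphism of drawings is determined entirely by which pairs of edges cross, any uniformization of this local data carries substantial global information.

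First I would fix an arbitrary simple drawing $D$ of $K_N$ on $\sphere$ and a reference vertex $v_0$. The cyclic rotation of edges at $v_0$ induces a cyclic ordering on $V(D) \setminus \{v_0\}$; cutting this cycle at an arbitrary place yields a linear order $w_1, w_2, \ldots, w_{N-1}$. I would then color each 4-subset $\{w_a, w_b, w_c, w_d\}$ with $a<b<c<d$ by the crossing pattern of its induced $K_4$-subdrawing (one of four colors). By Ramsey's theorem for 4-uniform hypergraphs, for $N$ sufficiently large in terms of $n$ there is a subset $W \subseteq \{w_1,\ldots,w_{N-1}\}$ of size $m$ (with $m \to \infty$ as $N \to \infty$) on which every 4-subset receives the same color.

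For $m \geq 5$ the ``no-crossing'' color is forbidden since $K_5$ is not planar. Among the remaining three colors, I would argue by case analysis---up to reversing the linear order on $W$ (which may swap two of the colors)---that the only weak-isomorphism classes that actually occur are those of $C_m$ and $T_m$: the ``interleaving pair crosses'' color exactly matches the crossing pattern of $C_m$, and the ``nested pair crosses'' color exactly matches that of $T_m$. The ``consecutive pair crosses'' color can be ruled out by inspecting a small subconfiguration and verifying that the forced crossings violate simplicity.

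To upgrade from $C_m$ or $T_m$ on $W$ to $C_n$ or $T_n$ on $\{v_0\} \cup W$, one must incorporate $v_0$. The crossings between edges $v_0w_i$ and edges $w_jw_k$ are \emph{not} controlled by the 4-tuple coloring, since these pairs of edges share an endpoint. To handle them I would apply a further Ramsey refinement: color triples $\{w_a, w_b, w_c\} \subseteq W$ by the joint crossing pattern of the star $\{v_0w_a, v_0w_b, v_0w_c\}$ against the triangle $\{w_aw_b, w_bw_c, w_aw_c\}$, and pass to a uniform subset of size $n-1$. The main obstacle is this last step: showing that the combined Ramsey uniformity on 4-tuples and on triples forces $D[\{v_0\} \cup W]$ to lie in the weak-isomorphism class of exactly $C_n$ or $T_n$, with no hybrid patterns surviving. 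This bookkeeping---ruling out every uniform local pattern except those of $C_n$ and $T_n$---is the technical heart of the argument.
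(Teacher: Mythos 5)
The paper cites Theorem~\ref{thm:pst} from~\cite{pst} without proof, so there is no in-paper argument to compare against; what follows is an assessment of your proposal on its own.

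Your core idea is correct and is essentially the Ramsey argument of~\cite{pst}: since a simple drawing of $K_4$ has at most one crossing, colour each $4$-subset by which (if any) of its three pairs of independent edges crosses, apply the $4$-uniform Ramsey theorem, and then examine the surviving colours. However, you have manufactured an obstacle that is not there, and as a result you have flagged the wrong step as the ``technical heart.'' The vertex $v_0$ plays no real role: you used it only to produce a linear order on the remaining vertices, but any fixed linear order on $V(D)$ does just as well, and the Ramsey step is oblivious to where the order came from. More importantly, once you have a monochromatic $W$ you are already finished. Weak isomorphism is determined solely by which pairs of edges cross, edges sharing a vertex never cross in a simple drawing, and the constant $4$-tuple colour determines every crossing between independent edges within $W$. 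If the colour is ``interleaving'' then $D[W]$ has exactly the crossing pattern of $C_{|W|}$; if it is ``nested,'' exactly that of $T_{|W|}$; restricting to any $n$ of these vertices gives the theorem. No second Ramsey pass over triples is needed, and there is nothing to ``upgrade to $\{v_0\}\cup W$''---your final paragraph solves a problem that does not exist.

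Two smaller corrections. Reversing the linear order is not a usable symmetry: under $a<b<c<d\mapsto d>c>b>a$ the unordered pair $\{w_aw_b,w_cw_d\}$ maps to itself, and likewise for the interleaving and nested pairs, so no two colours are swapped. And the exclusion of the ``consecutive'' colour---which you leave as a vague appeal to ``inspecting a small subconfiguration''---deserves to be spelled out, as it is the only genuinely nontrivial step: for $w_1<\cdots<w_5$ in $W$ that colour forces $w_1w_2$ to cross each of $w_3w_4$, $w_3w_5$, $w_4w_5$. These three edges pairwise share a vertex, hence pairwise do not cross, so their union is a Jordan curve avoiding $w_1$ and $w_2$; the edge $w_1w_2$ would then meet this closed curve exactly three times, an odd number, contradicting the Jordan curve theorem. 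Together with the nonplanarity of $K_5$ (which kills the ``no crossing'' colour), that is all that is required.
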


\begin{figure}[ht!]
\def\ta#1{{\Scale[1.6]{#1}}}
\centering
\scalebox{0.44}{\input{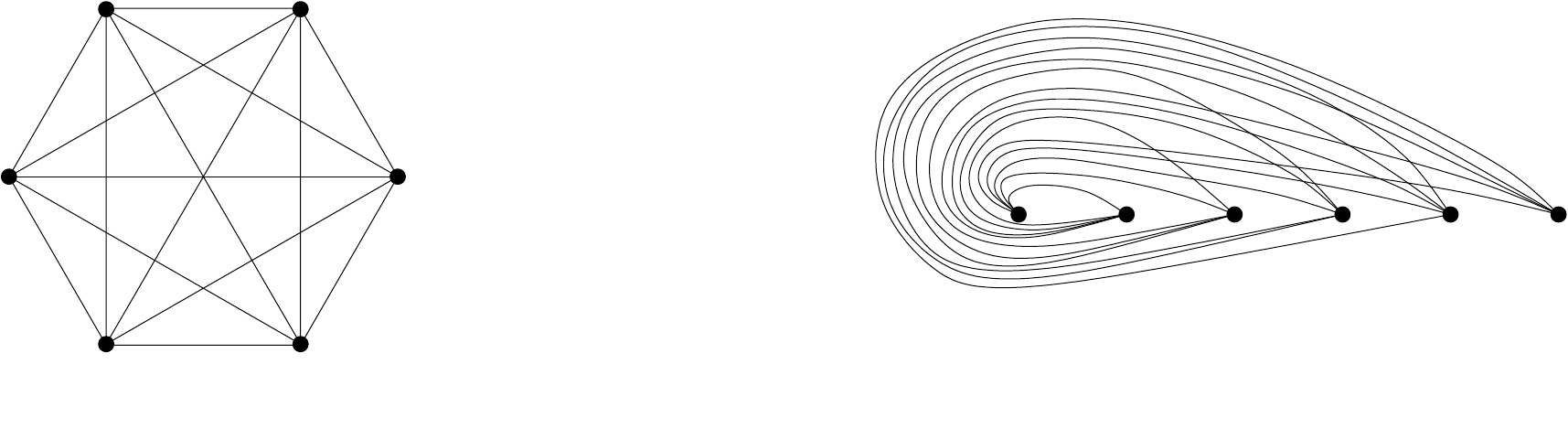_t}}
\caption{The drawings $C_6$ and $T_6$ of the complete graph $K_6$. It is straightforward to generalize these constructions to obtain drawings $C_n$ and $T_n$ for any positive integer $n$. These are the {unavoidable} drawings of $K_n$.}
\label{fig:harborth}
\end{figure}

For a recent refinement on the bounds in~\cite{pst} we refer the reader to~\cite{sukzeng}.
For straight-line drawings of $K_n$, it is a consequence of the 
celebrated Erd\H{o}s-Szekeres theorem~\cite{erdos1935combinatorial}
that $C_n$ is the only unavoidable drawing.

In a similar vein, Negami proved in~\cite{negami} that there is a unique (up to weak isomorphism) unavoidable drawing $B_n$ for complete bipartite graphs. See Figure~\ref{fig:165} for an illustration of $B_4$. 

\begin{theorem}[The unavoidable drawings of $K_{n,n}$~\cite{negami}]\label{thm:negami}
Let $n>0$ be an integer. If $N$ is sufficiently large, then every drawing of $K_{N,N}$ contains a drawing of $K_{n,n}$  weakly isomorphic to $B_n$.
\end{theorem}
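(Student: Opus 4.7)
The plan is to mimic the Ramsey-theoretic template used by Pach, Solymosi and T\'oth in the proof of Theorem~\ref{thm:pst}: extract combinatorial invariants (rotations, crossing patterns of small sub-$K_{p,q}$'s) from the drawing, apply iterated Ramsey-type reductions to stabilize these invariants on large sub-parts, and then argue that the only simple drawing compatible with the stabilized invariants is $B_n$.

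Let $D$ be a simple drawing of $K_{N,N}$ with parts $A$ and $B$ of size $N$. First I would fix a reference edge $a_0b_0$ with $a_0\in A$ and $b_0\in B$. The rotation of $D$ at $a_0$ gives a cyclic order of the edges incident to $a_0$, which, cut at $a_0b_0$, linearly orders $B\setminus\{b_0\}$; symmetrically, the rotation at $b_0$ linearly orders $A\setminus\{a_0\}$. These two orderings are the combinatorial skeleton that the proof will eventually identify with the vertex orderings appearing in $B_n$.

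Next, for every pair $(a,b)\in(A\setminus\{a_0\})\times(B\setminus\{b_0\})$ the subdrawing $D[\{a_0,a,b_0,b\}]$ is one of a constant number of weakly non-isomorphic simple drawings of $K_{2,2}$ with distinguished edge $a_0b_0$. Colour the pair $(a,b)$ by this type and apply the bipartite Ramsey theorem to obtain large $A_1\subseteq A\setminus\{a_0\}$ and $B_1\subseteq B\setminus\{b_0\}$ on which the colouring is monochromatic. I would then iterate: for ordered triples, then quadruples of vertices taken from the current $A_i\cup B_i$, the induced sub-$K_{k,l}$'s come in only a bounded number of weak isomorphism types, and each Ramsey step passes to large sub-parts $A_{i+1},B_{i+1}$ on which that new invariant is monochromatic as well. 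After a constant (depending only on $n$) number of such reductions, I arrive at $A'\subseteq A$ and $B'\subseteq B$, each of size $n$, for which the entire crossing pattern of $D[A'\cup B']$ is uniquely determined by the orderings of $A'$ and $B'$ inherited from the rotations at $a_0$ and $b_0$.

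The main obstacle is the final step: showing that this ``structured'' drawing of $K_{n,n}$ is in fact weakly isomorphic to $B_n$ rather than to some other ordered candidate. In principle, the Ramsey reduction could produce several different globally consistent crossing rules; the work is to rule out all but the one realized by $B_n$. My plan here is to use the simplicity of $D$ in an essential way: if a candidate uniform pattern other than that of $B_n$ were realised, one could locate four vertices whose six bipartite edges would force two edges to cross twice, to share a crossing with a third, or to violate planarity of the induced $K_{2,2}$ subdrawings that were frozen in Step~2. Each non-$B_n$ candidate should be eliminated by exhibiting such a forbidden configuration on a bounded number of vertices, so that the only surviving weakly isomorphic type is $B_n$ itself, completing the proof.
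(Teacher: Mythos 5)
First, a caveat about the benchmark: the paper does not prove Theorem~\ref{thm:negami} itself --- it is quoted from Negami --- and the paper only restates it in the language of \outer drawings (Theorem~\ref{thm:negami2}) and reuses Negami's key Lemma~\ref{lem:outerplanar}. So your proposal should be measured against Negami's argument together with the paper's own proofs of the analogous multipartite statements (Lemma~\ref{lem:big} and Proposition~\ref{pro:core2}), and your plan is indeed the same kind of argument: fix linear orders on the two classes, colour small induced configurations by their crossing pattern, apply Ramsey, and eliminate every monochromatic colour except those realized by $B_n$ by exhibiting forbidden configurations on a bounded number of vertices. This is precisely the technique the paper uses elsewhere (a $4$-uniform hypergraph coloured by the five crossing types $\eta_0,\dots,\eta_4$ of a $K_{2,2}$, followed by a case analysis of the possible drawings of $K_{2,3}$, cf.\ Observation~\ref{obs:k23two}).

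The gap is that the decisive final step --- which you yourself flag as ``the main obstacle'' --- is left entirely as a plan, and it is where all of the content lives. Two remarks on closing it. First, the only colouring you actually need is on quadruples $\{a,a',b,b'\}$ with $a<a'$ in $A$ and $b<b'$ in $B$ (any fixed linear orders will do; the reference edge $a_0b_0$ and the iteration over triples and quadruples are unnecessary): record which of the two pairs of independent edges of the induced $K_{2,2}$ crosses, if any. Second, after Ramsey the crossing-free colour must be excluded --- for instance because it would force a planar drawing of $K_{3,3}$ --- and this exclusion is a step your sketch never mentions. Once that colour is ruled out you are essentially done, and for a reason simpler than the one you propose: weak isomorphism records only which pairs of edges cross, and each of the surviving monochromatic colours reproduces verbatim the crossing rule of $B_n$ (namely, $a_ib_j$ crosses $a_kb_l$ if and only if $(i-k)(j-l)<0$) after possibly reversing one of the two linear orders. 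No further elimination of ``globally consistent crossing rules'' is required; the rotation data you propose to carry along becomes necessary only for the stronger ordered statement that the paper actually uses (Theorem~\ref{thm:negami2}), where one must additionally invoke Lemma~\ref{lem:outerplanar} to pass from the uniform crossing pattern to an actual \outer drawing.
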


\begin{figure}[ht!]
\def\ta#1{{\Scale[2.0]{#1}}}
\def\tb#1{{\Scale[0.8]{#1}}}
\centering
\scalebox{0.48}{\input{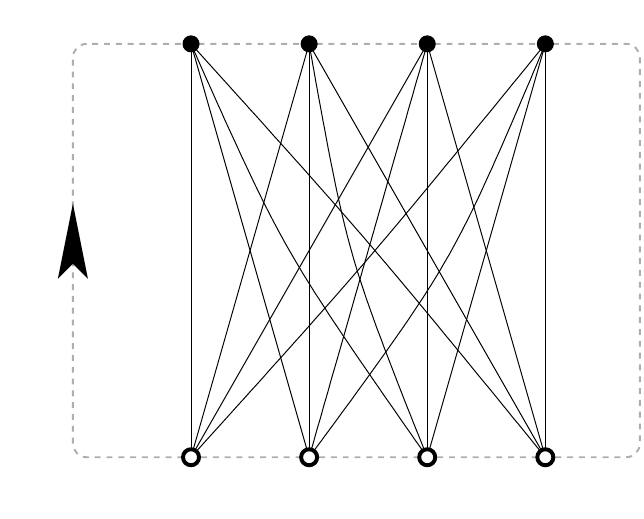_t}}
\caption{The drawing $B_4$ of $K_{4,4}$. It is straightforward to generalize this construction to obtain a drawing $B_n$ of $K_{n,n}$, for any positive integer $n$. These are the unavoidable drawings of $K_{n,n}$. The bounding curve $\omega$ witnesses that this is a \outer drawing: all vertices lie on $\omega$, and all edges lie on the same connected component of $\sphere\setminus\omega$. As we traverse $\omega$ in the direction shown (so that the edges stay to our right during this traversal) we encounter the vertices in the cyclic order $b_1,b_2,b_3,b_4,w_4,w_3,w_2,w_1$. Thus, this is a $\cycper{b_1,b_2,b_3,b_4,w_4,w_3,w_2,w_1}$-\outerd. If for brevity we let $B$ be the permutation $\perm{b_1,b_2,b_3,b_4}$ and let $W$ be $\perm{w_1,w_2,w_3,w_4}$, then we may equivalently say that this is a $\cycper{B\cdot W^{-1}}$-\outerd.}
\label{fig:165}
\end{figure}


\section{Our main result: the unavoidable drawings \\ of complete multipartite graphs}\label{sec:unavoidable}

Our main result is the generalization of Theorem~\ref{thm:negami} to complete multipartite graphs. Throughout this paper $K_n^m$ denotes the complete multipartite graph with $m$ classes, each of size $n$. For each pair $m,n$ of positive integers we find a collection of drawings of $K_n^m$ (we call them {\em canonical drawings}) with the following property: if $N$ is sufficiently large compared to $n$, then every drawing of $K_N^m$ contains a canonical drawing of $K_n^m$. Thus, in the spirit of Theorems~\ref{thm:pst} and~\ref{thm:negami}, the canonical drawings are precisely the unavoidable drawings of $K_n^m$. This is formally stated in Theorem~\ref{thm:main} at the end of this section.

In contrast to the canonical drawings of $K_n$ and $K_{n,n}$, the canonical drawings of $K_n^m$ form a much richer family.
Still, they are quite natural. 
To get a flavour of canonical drawings we entice the reader to take a sneak peek at Figure~\ref{fig:180}, where we illustrate a canonical drawing of $K_3^5$.

Before we can proceed to the formal definition of a canonical drawing we need to explain a crucial convention that we follow throughout this paper: partite classes of $K_n^m$ are regarded not only as sets of vertices, but as permutations (that is, ordered sets) of vertices. This is explained in Section~\ref{sub:permutations}. As we shall see, canonical drawings are described in terms of \outer drawings, and we devote Section~\ref{sub:outerplanar} to review this notion. 


Before we formally define what a canonical drawing is, in Section~\ref{sub:example} we thoroughly analyze the canonical drawing in Figure~\ref{fig:180}. Then we formally introduce canonical drawings in Section~\ref{sub:canonical}, and in Section~\ref{sub:main} we state the main result in this paper. 

We emphasize that even though after Section~\ref{sub:outerplanar} it is possible to skip Section~\ref{sub:example} and proceed right away to Section~\ref{sub:canonical}, we strongly encourage the reader to go through Section~\ref{sub:example}, where we discuss at leisure the main features of a canonical drawing.

\subsection{Partite classes of $K_n^m$ as permutations of vertices}\label{sub:permutations} 

In all complete multipartite graphs we consider {\em we assume that each partite class is not only a set of vertices, but a permutation (that is, an ordered set) of vertices.} Thus, we regard partite classes both as ordinary (unordered) sets and as permutations (ordered sets), depending on the discussion. 

Throughout this work we use $\perm{\, }$ to denote a permutation, and $\cycper{ \,}$ to denote a cyclic permutation.

Suppose for instance that we have a partite class labelled $\bom{1}$, with vertices $1(1),\ldots,$ $1(n)$. Regarding $\bom{1}$ as a permutation $\bom{1}=\perm{1(1),\ldots,1(n)}$ allows us to consider for instance the reverse permutation $\Rev{\bom{1}}=\perm{1(n),\ldots,1(1)}$. If $\bom{2}=\perm{2(1),\ldots,2(n)}$ is another partite class, then we can consider the concatenation of $\bom{1}$ and $\bom{2}$, for which we use the symbol $\cdot$, namely $\bom{1}\cdot {\bom{2}}=  \llbracket  1(1),\ldots,1(n),2(1),$ $\ldots,2(n)\rrbracket$, or the concatenation of $\bom{1}$ and $\Rev{\bom{2}}$, namely $\bom{1}\cdot \Rev{\bom{2}}=\llbracket{1(1),\ldots,1(n),}$  $2(n),\ldots,2(1)\rrbracket$. 

To see the convenience of this notation, we refer the reader to Figure~\ref{fig:225}, where we reproduce the subdrawing of the drawing in Figure~\ref{fig:180} induced by the edges incident with the vertex set $\{1(1),1(2),1(3)\}$. Let $\bom{1}=\perm{1(1),1(2),1(3)}$, let $\bom{2}=\perm{2(1),2(2),2(3)}$, and let $\bom{3}=\perm{3(1),3(2),3(3)}$. As we traverse the blue curve $\phi_1$ so that the edges stay at our right, we encounter the vertices in the cyclic order $\llparenthesis 1(1),1(2),1(3),3(1),$ $3(2),3(3)$, $2(3),2(2),2(1) \rrparenthesis$. With the notation we introduced this cyclic permutation can be written simply as $\cycper{\bom{1} \cdot \bom{3} \cdot \Rev{\bom{2}}}$.

A {\em subpermutation} $A'$ of a permutation $A$ is obtained by removing (maybe zero) elements from $A$. We write $A'\preceq A$ to denote that $A'$ is a subpermutation of $A$. For instance, if $\bom{1}$ is the permutation in the previous paragraph, then $\perm{1(1),1(3)}\preceq \bom{1}$. 

Finally, if $A$ is a permutation and $a$ precedes $a'$ in $A$, then we write $a <_{A} a'$. 

\subsection{\Outer drawings}\label{sub:outerplanar}

The description of canonical drawings relies crucially on the notion of a \outer drawing: as we will see, the canonical drawings of $K_n^m$ have {\outer} (sub)drawings as their building blocks.

We recall that in a {\em \outer} drawing of a graph (such as the drawing in Figure~\ref{fig:165}) there is a simple closed curve $\omega$ that goes through all the vertices, and has the property that all the edges are contained in one of the two connected components of $\sphere\setminus\omega$. We say that $\omega$ is a {\em bounding curve} for the drawing. 

If as we traverse $\omega$ so that the edges stay at our right during the traversal we encounter the vertices in the cyclic order $v_1,\ldots,v_n$, then we simply say that this is a $\cycper{v_1,\ldots,v_n}$-{\em\outerd}, and that the drawing has {\em bounding order} $\cycper{v_1,\ldots,v_n}$. 

Thus the drawing in Figure~\ref{fig:165} is a $\cycper{b_1,b_2,b_3,b_4,w_4,w_3,w_2,w_1}$-drawing. If $B=\perm{b_1,b_2,b_3,b_4}$ and $W=\perm{w_1,w_2,w_3,w_4}$, then we may equivalently say that this is a $\cycper{B\cdot W^{-1}}$-\outerd. This compact way to describe a \outer drawing will be heavily used for the rest of this paper. For an additional example we refer the reader to Figure~\ref{fig:445} and its caption.

\subsubsection{Negami's theorem in the notation and terminology of \outer drawings}\label{sec:negamit}
In order to get acquainted with the upcoming arguments it seems worth discussing Theorem~\ref{thm:negami} in the notation and terminology of \outer drawings. To simplify our discussions we use the following notation for the rest of this paper. 

\vglue 0.3 cm
\noindent{\bf Notation. }{\sl If $U$ is a set of vertices, then we use $\Edges{U}$ to denote the set of edges that are incident with a vertex in $U$. If $V$ is a set of vertices disjoint from $U$, then we use $\edges{U}{V}$ to denote the set of edges that have one endvertex in $U$ and one endvertex in $V$.}
\vglue 0.3 cm

A glance at the proof of~\cite[Theorem 5]{negami} (which is Theorem~\ref{thm:negami} above) reveals that Negami showed a result formally stronger than Theorem~\ref{thm:negami}. Indeed, his arguments actually prove the following:

\begin{theorem}[\cite{negami}]\label{thm:negami2}
Let $N\ge n>0$ be integers. Let $D$ be a drawing of $K_{N,N}$ with partite classes ${\BO{1}}$ and ${\BO{2}}$. If $N$ is sufficiently large compared to $n$, then there exist $\bom{1}\preceq \BO{1}$ and $\bom{2}\preceq \BO{2}$, with $|\bom{1}|=|\bom{2}|=n$, such that $D[\edges{\bom{1}}{\bom{2}}]$ is either:

\begin{itemize}

\item a $\cycper{\bom{1} \cdot \bom{2}}$-drawing; or 

\item a $\cycper{\Rev{\bom{1}} \cdot \bom{2}}$-drawing; or 

\item a $\cycper{\bom{1} \cdot \Rev{\bom{2}}}$-drawing; or 

\item a $\cycper{\Rev{\bom{1}} \cdot \Rev{\bom{2}}}$-drawing. 

\end{itemize}

\end{theorem}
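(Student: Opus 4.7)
The plan is to combine Theorem~\ref{thm:negami} with two applications of the Erdős-Szekeres theorem. First, I would choose $N$ large enough that Negami's theorem, applied with a parameter $M \ge (n-1)^4 + 1$, produces a subdrawing $D^*$ of $D$ weakly isomorphic to $B_M$. By the structure of $B_M$ recalled in Figure~\ref{fig:165}, $D^*$ is a \outer drawing whose bounding order has the form
\[
\cycper{s_1, s_2, \ldots, s_M, t_M, t_{M-1}, \ldots, t_1},
\]
where $\{s_1,\ldots,s_M\}\subseteq \BO{1}$ and $\{t_1,\ldots,t_M\}\subseteq \BO{2}$. The subscripts here merely index position along the bounding curve and need not respect the orderings imposed by the permutations $\BO{1}$ and $\BO{2}$.

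Next, I would apply the Erdős-Szekeres theorem to the sequence $s_1,\ldots,s_M$, regarded as a sequence in the linearly ordered set $\BO{1}$: this yields a set of indices $I_1\subseteq\{1,\ldots,M\}$ with $|I_1|\ge (n-1)^2+1$ along which $(s_i)_{i\in I_1}$ is monotone, either in the order of $\BO{1}$ or in its reverse. A second application of Erdős-Szekeres, this time to $(t_i)_{i\in I_1}$ inside the linearly ordered set $\BO{2}$, produces $I_2\subseteq I_1$ with $|I_2|=n$ along which $(t_i)_{i\in I_2}$ is also monotone in $\BO{2}$. Since monotonicity persists under passing to subsequences, $(s_i)_{i\in I_2}$ remains monotone in $\BO{1}$.

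Finally, let $\bom{1}\preceq \BO{1}$ and $\bom{2}\preceq \BO{2}$ be the subpermutations whose underlying sets are $\{s_i : i\in I_2\}$ and $\{t_i : i\in I_2\}$, respectively. Writing $I_2=\{i_1<\cdots<i_n\}$, the subdrawing $D[\edges{\bom{1}}{\bom{2}}]$ inherits \outerness from $D^*$ by restriction of the bounding curve, and its bounding order is just $\cycper{s_{i_1},\ldots,s_{i_n},t_{i_n},\ldots,t_{i_1}}$; note that the $t$-vertices thus appear in the bounding order in the reverse of their index order. A short case analysis on the four possible combinations of monotonicity directions then matches each case to exactly one of $\cycper{\bom{1}\cdot\Rev{\bom{2}}}$, $\cycper{\bom{1}\cdot\bom{2}}$, $\cycper{\Rev{\bom{1}}\cdot\Rev{\bom{2}}}$, $\cycper{\Rev{\bom{1}}\cdot\bom{2}}$, as required. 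The only real obstacle is this bookkeeping: one must track carefully how the built-in reversal of the $w$-side in $B_M$ interacts with each Erdős-Szekeres outcome to produce the expected bounding order on the nose.
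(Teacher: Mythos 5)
The Erd\H{o}s--Szekeres step and the closing case analysis are fine, but the opening step contains a genuine gap. You write that, because $D^*$ is weakly isomorphic to $B_M$, ``$D^*$ is a \outer drawing whose bounding order has the form $\cycper{s_1,\ldots,s_M,t_M,\ldots,t_1}$.'' Weak isomorphism, by definition, preserves only \emph{which} pairs of edges cross. Being a \outer drawing with a particular bounding order is a much stronger structural property: by Negami's Lemma~\ref{lem:outerplanar}, it is equivalent to naturality of the corresponding pair, and naturality constrains the \emph{rotation at each crossing}, which is data that weak isomorphism does not control. So you cannot read off \outerity of $D^*$ directly from the conclusion of Theorem~\ref{thm:negami}; in principle a drawing could have the same crossing pattern as $B_M$ while the rotations at its crossings are ``inconsistent'' with a single bounding curve. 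To repair this you would need an additional result asserting that, for $M$ large, any simple drawing of $K_{M,M}$ weakly isomorphic to $B_M$ has the same rotation system as $B_M$ up to reflection (such theorems exist in the literature on simple topological graphs, but you neither cite nor prove one).

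The paper does not take your route at all: it does not derive Theorem~\ref{thm:negami2} from Theorem~\ref{thm:negami}. It observes that Negami's original Ramsey-theoretic proof (which colours $K_{2,2}$ subconfigurations by their crossing type and rotation, and then applies Lemma~\ref{lem:outerplanar}) already produces a genuine \outer subdrawing with one of the four prescribed bounding orders, and moreover does so with the selected vertices appearing as a \emph{subpermutation} of each $\BO{i}$. In other words, Negami's argument hands you both the \outerity and the monotone ordering simultaneously, so no separate Erd\H{o}s--Szekeres extraction is required, and the issue of recovering \outerity from weak isomorphism never arises. If you want a self-contained proof along your lines, you should either re-run Negami's Ramsey argument directly (as the paper effectively does for Propositions~\ref{pro:core} and~\ref{pro:core2}), or supply the missing lemma that ties the weak isomorphism class of $B_M$ to a unique rotation system.
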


It is worth emphasizing why Theorem~\ref{thm:negami2} is indeed (slightly) stronger than Theorem~\ref{thm:negami}. Suppose for definiteness that $n=4$. Suppose that $\BO{1}$ is the permutation $\perm{1(1),\ldots,1(N)}$ and $\BO{2}$ is the permutation $\perm{2(1),\ldots,2(N)}$. 

Theorem~\ref{thm:negami2} then claims (if $N$ is sufficiently large) the existence of {\em subpermutations} (not only subsets) $\bom{1}=\perm{1(a_1),1(a_2),1(a_3),1(a_4)}$ of $\BO{1}$ (thus  $a_1< a_2 < a_3 < a_4$) and $\bom{2}=\perm{2(b_1),2(b_2),2(b_3),2(b_4)}$ of $\BO{2}$ (thus $b_1< b_2 < b_3 < b_4$) such that $D[\edges{\bom{1}}{\bom{2}}]$ is of one of the four types in Theorem~\ref{thm:negami2}. These four possible outcomes are illustrated in Figure~\ref{fig:445}(a), (b), (c), and (d), respectively.

\begin{figure}[ht!]
\def\ta#1{{\Scale[2.0]{#1}}}
\def\taaone{{\Scale[1.8]{1(a_1)}}}
\def\taatwo{{\Scale[1.8]{1(a_2)}}}
\def\taathree{{\Scale[1.8]{1(a_3)}}}
\def\taafour{{\Scale[1.8]{1(a_4)}}}
\def\tabone{{\Scale[1.8]{2(b_1)}}}
\def\tabtwo{{\Scale[1.8]{2(b_2)}}}
\def\tabthree{{\Scale[1.8]{2(b_3)}}}
\def\tabfour{{\Scale[1.8]{2(b_4)}}}
\def\tarm#1{{\Scale[2.0]{\text{\rm #1}}}}
\def\tb#1{{\Scale[0.8]{#1}}}
\centering
\scalebox{0.5}{\input{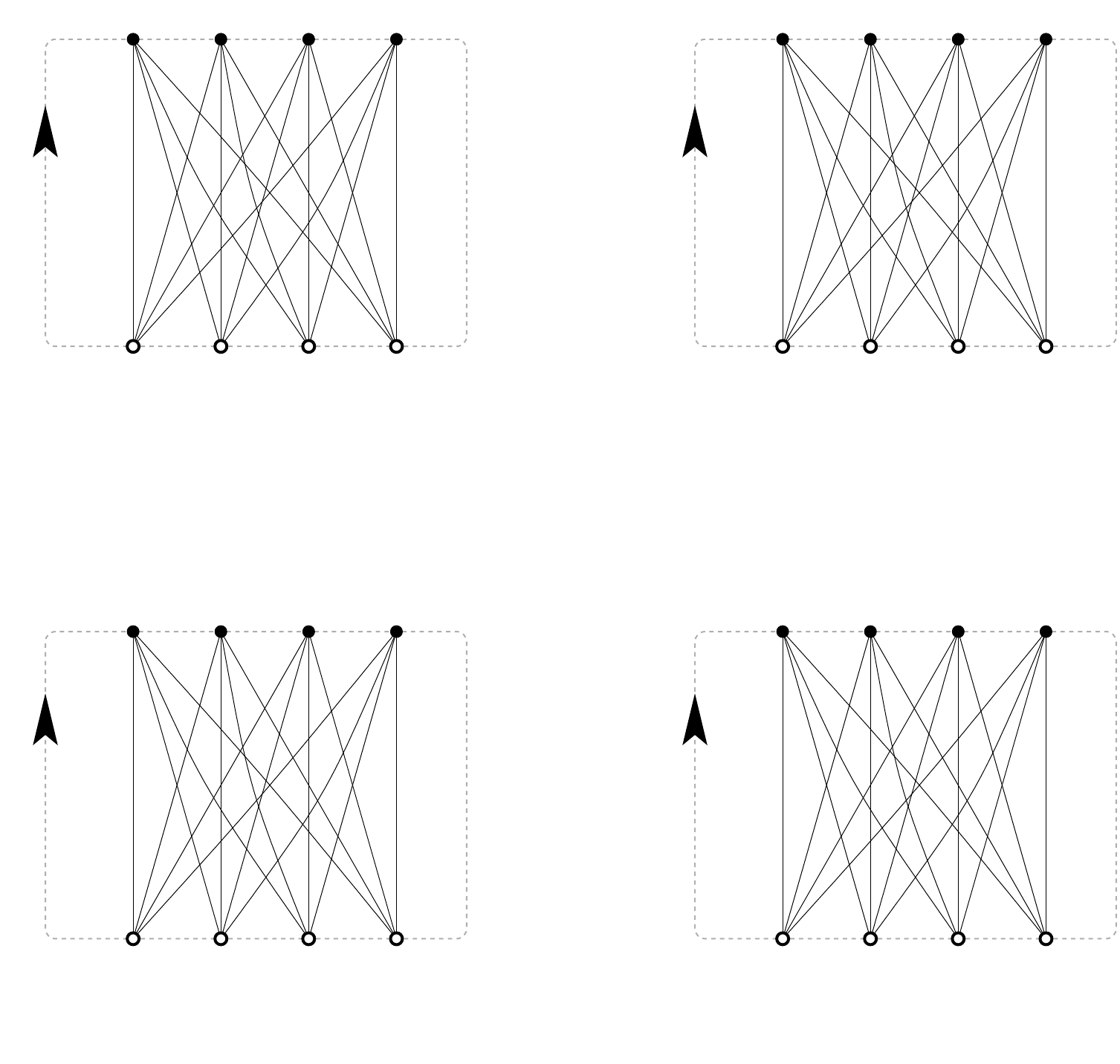_t}}
\caption{Let $\bom{1}=\perm{1(a_1),1(a_2),1(a_3),1(a_4)}$ and $\bom{2}=\perm{2(b_1),2(b_2),2(b_3),2(b_4))}$. Then the drawing in (a) is a $\cycper{\bom{1}\cdot\bom{2}}$-{\outerd}, the drawing in (b) is a $\cycper{\Rev{\bom{1}}\cdot\bom{2}}$-{\outerd}; the drawing in (c) is a $\cycper{\bom{1}\cdot\Rev{\bom{2}}}$-{\outerd}; and the drawing in (d) is a $\cycper{\Rev{\bom{1}}\cdot\Rev{\bom{2}}}$-{\outerd}.}
\label{fig:445}
\end{figure}

As illustrated in Figure~\ref{fig:445}, each of these outcomes in particular implies that \blueit{$D[\edges{\bom{1}}{\bom{2}}]$} is weakly isomorphic to the unavoidable drawing $B_4$ (see Figure~\ref{fig:165}), as stated in Theorem~\ref{thm:negami}. 

\subsubsection{Writing Theorem~\ref{thm:negami2} using a sign function $\sigma$}

We close this section by writing Theorem~\ref{thm:negami2} in a more compact way, in order to introduce the reader to the notion of a {sign function}. As customary, for any positive integer $m$ we use $[m]$ to denote the set $\{1,2,\ldots,m\}$.

As we shall see, if $m > 1$ is an integer then a {\em sign function over $[m]$} is simply a function $\sigma$ that assigns to each ordered pair $(i,j)$ of distinct integers in $[m]$ an integer $\sigma(i,j)$ in $\{-1,1\}$. For $m=2$, a sign function $\sigma$ simply assigns to $\sigma(1,2)$ either $-1$ or $1$, and it assigns to $\sigma(2,1)$ either $-1$ or $1$.

With this notion in hand, Theorem~\ref{thm:negami2} can be equivalently stated as follows.

\begin{theorem}[Theorem~\ref{thm:negami2} using a sign function]\label{thm:negami3}
Let $N\ge n>0$ be integers. Let $D$ be a drawing of $K_{N,N}$ with partite classes ${\BO{1}}$ and ${\BO{2}}$, where $|\BO{1}|=|\BO{2}|=N$. If $N$ is sufficiently large compared to $n$, then there exist $\bom{1}\preceq \BO{1}$ and $\bom{2}\preceq \BO{2}$, with $|\bom{1}|=|\bom{2}|=n$, and a sign function $\sigma$ over $[2]$, such that $D[\edges{\bom{1}}{\bom{2}}]$ is a $\cycper{\sigma(2,1)\,\,\bom{1}\cdot \sigma(1,2)\,\,\bom{2}}$-\outerd.
\end{theorem}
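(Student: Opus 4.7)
The plan is to observe that Theorem~\ref{thm:negami3} is not a new result but a notational repackaging of Theorem~\ref{thm:negami2}: the four bullet points in that theorem are parameterized by a single object, the sign function $\sigma$. Since $N$ being large enough compared to $n$ was already arranged by Negami, the only work is to (i) fix the convention for how the symbol $\sigma(j,i)\,\bom{i}$ is interpreted, and (ii) match the four possible values of $\sigma$ to the four bounding orders that can occur.

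First I would fix the convention that, for a permutation $A$ and a sign $s\in\{-1,+1\}$, the expression $s\,A$ denotes $A$ if $s=+1$ and $\Rev{A}$ if $s=-1$. A sign function $\sigma$ over $[2]$ is then determined by the pair $(\sigma(2,1),\sigma(1,2))\in\{-1,+1\}^2$, giving exactly four choices. These four choices correspond bijectively to the four bounding orders in Theorem~\ref{thm:negami2}: $(+1,+1)$ yields $\cycper{\bom{1}\cdot\bom{2}}$; $(-1,+1)$ yields $\cycper{\Rev{\bom{1}}\cdot\bom{2}}$; $(+1,-1)$ yields $\cycper{\bom{1}\cdot\Rev{\bom{2}}}$; and $(-1,-1)$ yields $\cycper{\Rev{\bom{1}}\cdot\Rev{\bom{2}}}$. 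With the convention in place, each of these is literally $\cycper{\sigma(2,1)\,\bom{1}\cdot\sigma(1,2)\,\bom{2}}$.

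Then I would simply apply Theorem~\ref{thm:negami2} to $D$: for $N$ sufficiently large compared to $n$ we obtain subpermutations $\bom{1}\preceq\BO{1}$ and $\bom{2}\preceq\BO{2}$ of size $n$ such that $D[\edges{\bom{1}}{\bom{2}}]$ has one of the four listed bounding orders; defining $\sigma$ to be the unique sign function over $[2]$ that encodes the bounding order obtained gives the conclusion. I do not expect any real obstacle here, since the content is entirely absorbed into Theorem~\ref{thm:negami2}. The only subtlety worth flagging is the indexing convention that the orientation attached to class $i$ is recorded by $\sigma(j,i)$ with $j\ne i$ rather than by some index depending only on $i$; this choice looks strange in the bipartite case, where $j$ is forced to equal the unique other class, but it is presumably motivated by the $m$-partite generalization to come, where the orientation of one class may reasonably be read off relative to a chosen second class.
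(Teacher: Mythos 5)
Your proposal is correct and matches the paper's treatment: the paper presents Theorem~\ref{thm:negami3} explicitly as an \emph{equivalent restatement} of Theorem~\ref{thm:negami2}, with no separate argument beyond the observation that a sign function $\sigma$ over $[2]$ encodes which of the four bounding orders occurs. Your assignment of $\sigma(2,1)$ to the orientation of $\bom{1}$ and $\sigma(1,2)$ to the orientation of $\bom{2}$ is exactly the intended convention, and your remark that the seemingly awkward indexing is chosen to anticipate the $m$-partite generalization is also accurate.
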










\subsection{An example: a canonical drawing exhaustively analyzed}\label{sub:example}

In this section we exhibit an example of a canonical drawing, and exhaustively analyze its main features. Our purpose is to motivate the properties that appear in the definition of a canonical drawing, which is given in Section~\ref{sub:canonical}. 

We emphasize that this section is entirely optional, as all the notions, notation, and terminology in the definition of a canonical drawing have been already laid out at this point. Nevertheless, we strongly believe that it is worth understanding at depth this particular example before getting to the formal definition of a canonical drawing.

We refer the reader to Figure~\ref{fig:180}, where we depict a canonical drawing $L$ of $K_3^5$. The partite classes are $\bom{1}=\perm{1(1),1(2),1(3)}$, $\bom{2}=\perm{2(1),2(2),2(3)}$, $\bom{3}=\perm{3(1),3(2),3(3)}$, $\bom{4}=\perm{4(1),4(2),4(3)}$, and $\bom{5}=\perm{5(1),5(2),3(3)}$. We emphasize that the gray \blueit{boxes and thick segments} are not part of the drawing, and are included for a later discussion.

\def\inca{{\Scale[3]{\text{\rm (a)}}}}
\def\incb{{\Scale[3]{\text{\rm (b)}}}}
\def\incc{{\Scale[3]{\text{\rm (b)}}}}
\def\incd{{\Scale[3]{\text{\rm (b)}}}}
\def\te#1{{\Scale[4.2]{#1}}}
\def\taf#1{{\Scale[12]{#1}}}
\def\tf#1{{\Scale[8]{#1}}}
\def\tg#1{{\Scale[5.5]{#1}}}
\def\otg#1{{\Scale[6.5]{#1}}}
\def\somea{{\Scale[1.5]{\text{\rm (a)}}}}
\def\someb{{\Scale[1.5]{\text{\rm (b)}}}}
\def\somec{{\Scale[1.5]{\text{\rm (c)}}}}
\def\somed{{\Scale[1.5]{\text{\rm (d)}}}}
\def\somee{{\Scale[1.5]{\text{\rm (e)}}}}
\def\Za{{\Scale[4.0]{E_{u\OL{v}}}}}
\def\Zb{{\Scale[4.0]{E_{\OL{u22}\OL{v}}}}}
\def\Zc{{\Scale[4.0]{E_{\OL{u}{v}}}}}
\def\Rec{{\Scale[6.0]{\rho}}}
\def\tq#1{{\Scale[0.8]{#1}}}
\begin{figure}[ht!]
\centering
\scalebox{0.38}{\input{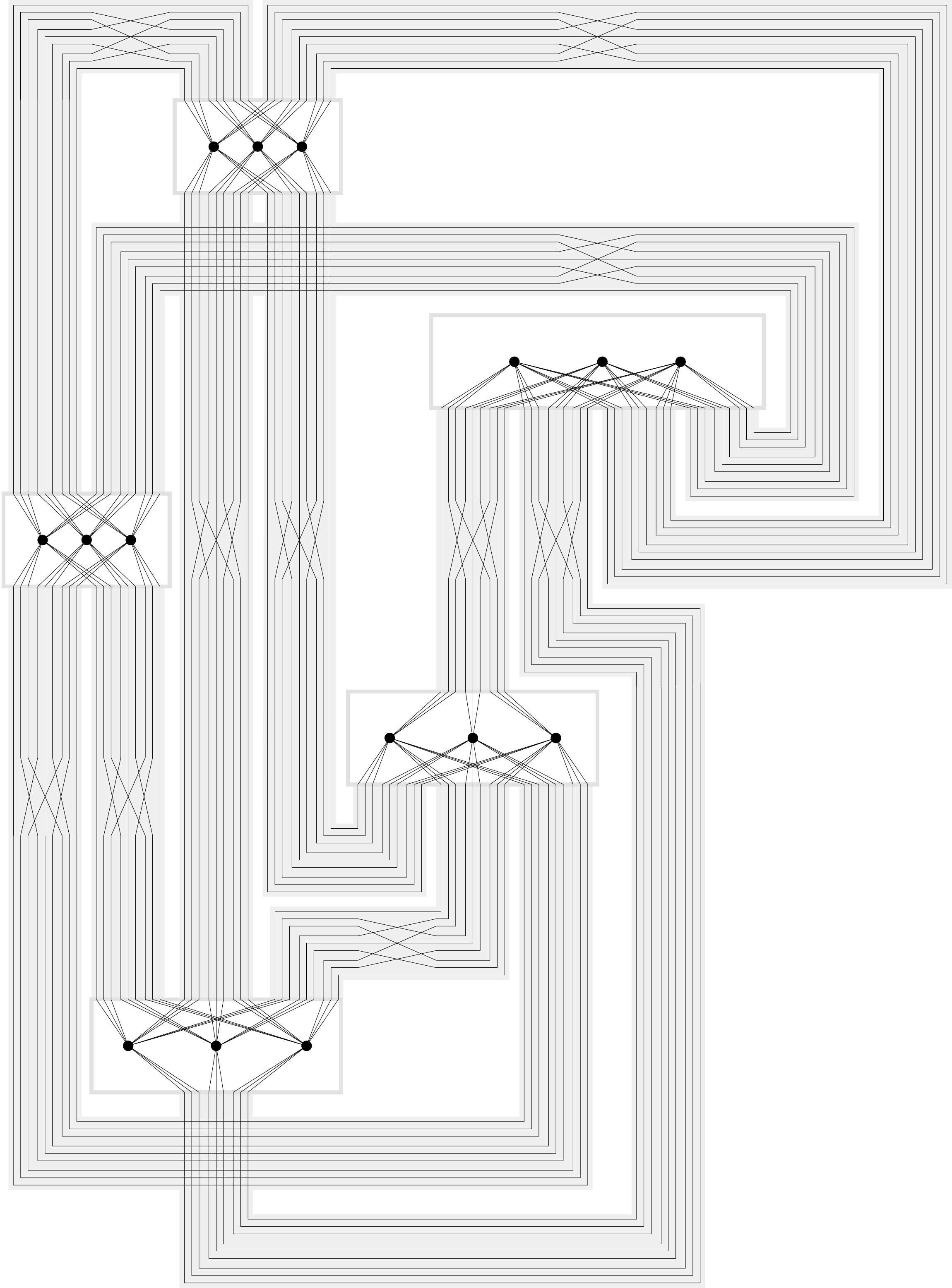_t}}
\caption{{A canonical drawing $L$ of $K_3^5$.}} 
\label{fig:180}
\end{figure}

\blueit{A central feature of the drawing in Figure~\ref{fig:180} is captured in Figure~\ref{fig:225}, where we reproduce the subdrawing induced by the edges incident with $\bom{1}$, that is, the subdrawing $L[\Edges{\bom{1}}]$ induced by $\Edges{\bom{1}}$.}


\def\inca{{\Scale[3]{\text{\rm (a)}}}}
\def\incb{{\Scale[3]{\text{\rm (b)}}}}
\def\te#1{{\Scale[4.2]{#1}}}
\def\taf#1{{\Scale[12]{#1}}}
\def\tf#1{{\Scale[8]{#1}}}
\def\tg#1{{\Scale[5.5]{#1}}}
\def\otg#1{{\Scale[6.5]{#1}}}
\def\somea{{\Scale[1.5]{\text{\rm (a)}}}}
\def\someb{{\Scale[1.5]{\text{\rm (b)}}}}
\def\somec{{\Scale[1.5]{\text{\rm (c)}}}}
\def\somed{{\Scale[1.5]{\text{\rm (d)}}}}
\def\somee{{\Scale[1.5]{\text{\rm (e)}}}}
\def\Za{{\Scale[4.0]{E_{u\OL{v}}}}}
\def\Zb{{\Scale[4.0]{E_{\OL{u22}\OL{v}}}}}
\def\Zc{{\Scale[4.0]{E_{\OL{u}{v}}}}}
\def\Rec{{\Scale[6.0]{\rho}}}
\def\tq#1{{\Scale[1.1]{#1}}}
\def\Rq#1{{\Scale[0.8]{#1}}}
\def\Qq#1{{\Scale[0.9]{#1}}}
\def\Tq#1{{\Scale[1.0]{#1}}}
\def\Aq#1{{\Scale[2.0]{#1}}}
\def\tarm#1{{\Scale[1.6]{\text{\rm (#1)}}}}
\begin{figure}[ht!]
\centering
\scalebox{0.34}{\input{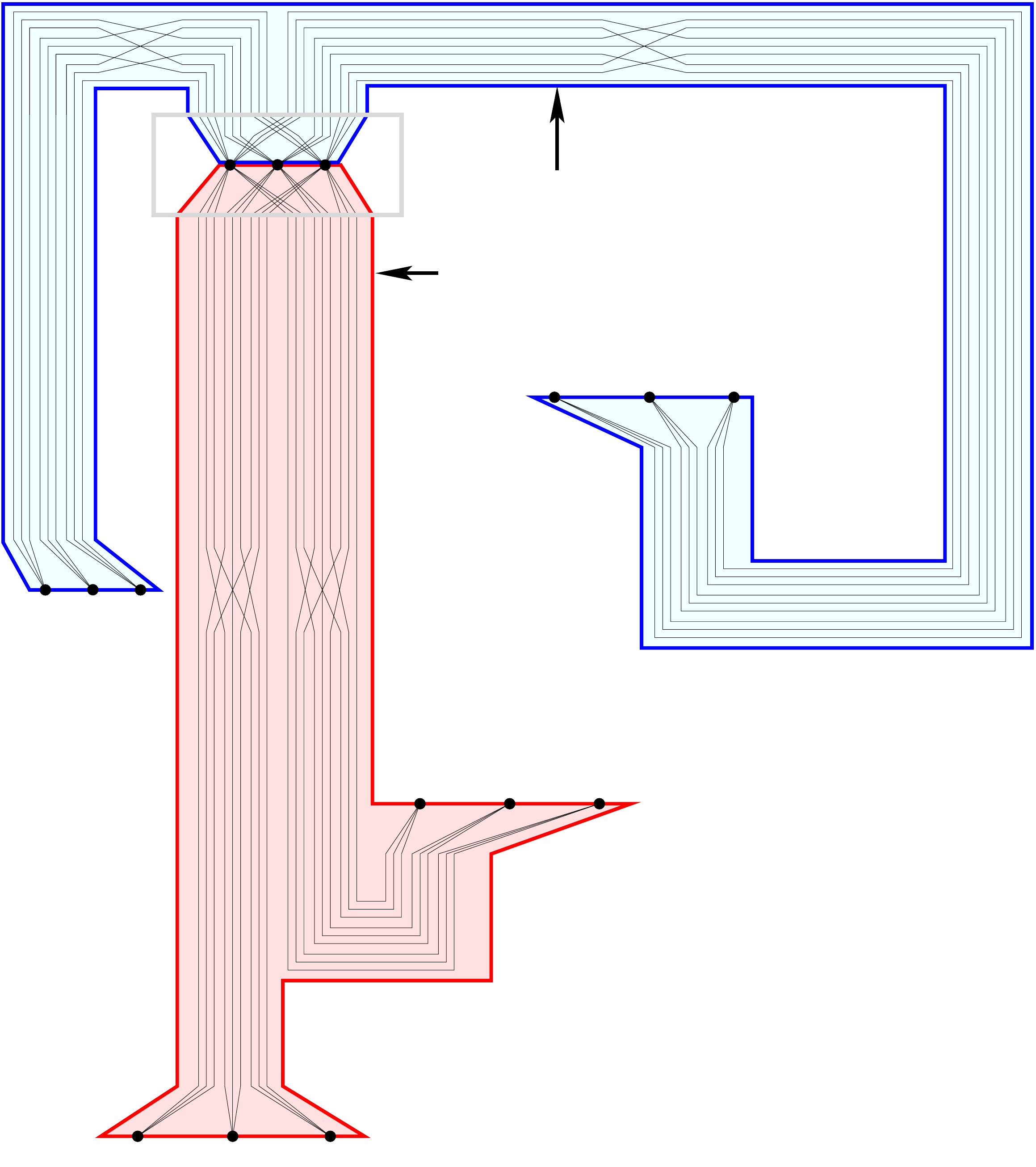_t}}
\caption{This is $L[\Edges{\bom{1}}]$, the subdrawing of the drawing $L$ in Figure~\ref{fig:180} induced by the edges incident with partite class $\bom{1}$. A key feature is that $L[\Edges{\bom{1}}]$ is the crossing-disjoint union of a $\cycper{\bom{1}\cdot \bom{3} \cdot \Rev{\bom{2}} }$-{\outerd} (the blue part) and a $\cycper{\Rev{\bom{1}}\cdot \Rev{\bom{4}}\cdot \bom{5} }$-{\outerd} (the red part).}
\label{fig:225}
\end{figure}

The key property illustrated in Figure~\ref{fig:225} is that $L[\Edges{\bom{1}}]$ is the union of two \outer drawings, the ``blue'' \outer drawing $L[\edges{\bom{1}}{(\bom{2}\cup\bom{3})}]$ (with blue bounding curve $\phi_1$), and the ``red'' \outer drawing $L[\edges{\bom{1}}{(\bom{4}\cup\bom{5})}]$ (with red bounding curve $\chi_1$). 

We note the following key features:

\begin{enumerate}

\item[(A)] \blueit{the blue \outer drawing is a $\cycper{\bom{1}\cdot \bom{3} \cdot \Rev{\bom{2}}}$-{\outerd}, as the vertices in $\bom{1}\cup \bom{2} \cup \bom{3}$ appear in $\phi_1$ in the cyclic order $1(1),1(2),1(3),\, 3(1),3(2),3(3),\, 2(3),2(2),2(1) = \bom{1} \cdot \bom{3} \cdot \Rev{\bom{2}}$;}

\item[(B)]\blueit{the red \outer drawing is a $\cycper{\Rev{\bom{1}} \cdot \Rev{\bom{4}} \cdot \bom{5} }$-\outerd, as the vertices in $\bom{1}\cup\bom {4}\cup \bom{5}$ appear in $\chi_1$ in the cyclic order $1(3),1(2),1(1),\, 4(3),4(2),4(1),\, 5(1),5(2),5(3) = \Rev{\bom{1}}\cdot \Rev{\bom{4}}\cdot \bom{5}$; and}

\item[(C)] the blue \outer drawing and the red \outer drawing are {\em crossing-disjoint}, that is, no edge in the blue drawing crosses an edge in the red drawing.

\end{enumerate}

Thus, the structure of $L[\Edges{\bom{1}}]$ admits a very simple description:

\begin{enumerate}

\item[(L1)] $L[\Edges{\bom{1}}]$ is the crossing-disjoint union of a $\cycper{\bom{1}\cdot \bom{3} \cdot \Rev{\bom{2}} }$-{\outerd} and a $\cycper{\Rev{\bom{1}}\cdot \Rev{\bom{4}}\cdot \bom{5} }$-{\outerd}.

\end{enumerate}

It is easy to verify from Figure~\ref{fig:180} that $L[\Edges{\bom{2}}]$, $L[\Edges{\bom{3}}]$, $L[\Edges{\bom{4}}]$, and $L[\Edges{\bom{5}}]$ also admit easy descriptions:

\begin{enumerate}

\item[(L2)] $L[\Edges{\bom{2}}]$ is the crossing-disjoint union of an empty {\outerd} and a $\cycper{ \Rev{\bom{2}} \cdot \bom{3} \cdot \bom{1} \cdot \Rev{\bom{5}} \cdot \bom{4}  }$-\outerd. \blueit{(We may of course simply say that $L[\Edges{\bom{2}}]$ is a $\cycper{ \Rev{\bom{2}} \cdot \bom{3} \cdot \bom{1} \cdot \Rev{\bom{5}} \cdot \bom{4}  }$-\outerd, but the given description is meant to match the descriptions in (L1), (L3), (L4), and (L5)).}

\item[(L3)] $L[\Edges{\bom{3}}]$ is the crossing-disjoint union of a $\cycper{\bom{3}\cdot \bom{1} \cdot \Rev{\bom{2}}}$-{\outerd} and a $\cycper{ \Rev{\bom{3}}\cdot \bom{5} \cdot \Rev{\bom{4}}}$-\outerd.

\item[(L4)] $L[\Edges{\bom{4}}]$ is the crossing-disjoint union of a $\cycper{\bom{4} \cdot \Rev{\bom{2}}}$-{\outerd} and a $\cycper{ \Rev{\bom{4}}\cdot \Rev{\bom{3}} \cdot \bom{5} \Rev{\bom{1}} }$-\outerd.

\item[(L5)] $L[\Edges{\bom{5}}]$ is the crossing-disjoint union of a 
$\cycper{{\bom{5}}{\cdot} {\Rev{\bom{3}}} {\cdot}{\Rev{\bom{1}}}{\cdot}{\Rev{\bom{4}}}}$-{\outerd} and a ${\cycper{ \Rev{\bom{5}}\cdot \Rev{\bom{2}}}}$-\outerd.

\end{enumerate}

In short, for each $i\in[5]$ we have that $L[\Edges{\bom{i}}]$ is the crossing-disjoint union of two \outer drawings. The bounding order of one of these \outer drawings contains $\bom{i}$, and the bounding order of the other \outer drawing contains $\Rev{\bom{i}}$. Moreover, for each $j\in [5]\setminus \{i\}$ either $\bom{j}$ or $\Rev{\bom{j}}$ appears in (exactly) one of these two bounding orders. 

As we shall see in Section~\ref{sub:canonical}, if we replace $[5]$ with $[m]$ in the previous paragraph we obtain the distinguishing features of {\em every} canonical drawing of a complete multipartite graph.

\subsubsection{Encoding the canonical drawing in Figure~\ref{fig:180}}\label{subsub:encoding}

Let us now describe a way to encode all the relevant information in the drawing in Figure~\ref{fig:180}, which is the way in which we shall encode all canonical drawings.

We start by noting that, informally speaking, for each pair of partite classes $\bom{i},\bom{j}$ there are two ways in which the edges in $\Edges{\bom{i}}{\bom{j}}$ can ``arrive'' as we follow them from $\bom{j}$ to $\bom{i}$. For instance, in Figure~\ref{fig:225} the edges from classes $\bom{2}$ and $\bom{3}$ arrive to the vertices in $\bom{1}$ ``from above'' (they cross the top side of the gray rectangle just before they arrive to the vertices in $\bom{1}$), and the edges from $\bom{4}$ and $\bom{5}$ arrive to $\bom{1}$ ``from below'' (they cross the bottom side of the gray rectangle just before they arrive to the vertices in $\bom{1}$). We also refer the reader to the top left part of Figure~\ref{fig:195}.

We capture this information with a {\em sign function} $\sigma$ by letting $\sigma(2,1)=\sigma(3,1)=1$ and letting $\sigma(4,1)=\sigma(5,1)=-1$. 

In general, keeping the tone informal, $\sigma(j,i)$ is $1$ if the edges coming from $\bom{j}$ arrive to $\bom{i}$ from above, and $\sigma(j,i)$ is $-1$ if the edges coming from $\bom{j}$ arrive to $\bom{i}$ from below. For the drawing in Figure~\ref{fig:180} we capture this basic information in Figure~\ref{fig:195}, illustrating how the edges from each class $\bom{j}$ arrive to each class $\bom{i}$. From this figure we obtain that $\sigma(1,2)=\sigma(3,2)=\sigma(4,2)=\sigma(5,2)=-1$, that $\sigma(1,3)=\sigma(2,3)=1$ and $\sigma(4,3)=\sigma(5,3)=-1$, that $\sigma(2,4)=1$ and $\sigma(1,4)=\sigma(3,4)=\sigma(5,4)=-1$, and that $\sigma(1,5)=\sigma(3,5)=\sigma(4,5)=1$ and $\sigma(2,5)=-1$.

\def\inca{{\Scale[3]{\text{\rm (a)}}}}
\def\incb{{\Scale[3]{\text{\rm (b)}}}}
\def\te#1{{\Scale[4.2]{#1}}}
\def\taf#1{{\Scale[12]{#1}}}
\def\tf#1{{\Scale[8]{#1}}}
\def\tq#1{{\Scale[0.7]{#1}}}
\def\tg#1{{\Scale[5.5]{#1}}}
\def\otg#1{{\Scale[6.5]{#1}}}
\def\somea{{\Scale[1.5]{\text{\rm (a)}}}}
\def\someb{{\Scale[1.5]{\text{\rm (b)}}}}
\def\somec{{\Scale[1.5]{\text{\rm (c)}}}}
\def\somed{{\Scale[1.5]{\text{\rm (d)}}}}
\def\somee{{\Scale[1.5]{\text{\rm (e)}}}}
\def\Za{{\Scale[4.0]{E_{u\OL{v}}}}}
\def\Zb{{\Scale[4.0]{E_{\OL{u22}\OL{v}}}}}
\def\Zc{{\Scale[4.0]{E_{\OL{u}{v}}}}}
\def\Rec{{\Scale[6.0]{\rho}}}
\def\tq#1{{\Scale[1.1]{#1}}}
\def\Rq#1{{\Scale[0.8]{#1}}}
\def\Qq#1{{\Scale[0.9]{#1}}}
\def\Tq#1{{\Scale[1.0]{#1}}}
\def\Aq#1{{\Scale[2.0]{#1}}}
\def\tarm#1{{\Scale[1.6]{\text{\rm (#1)}}}}
\def\tq#1{{\Scale[0.8]{#1}}}
\def\WW#1{{\Scale[1.1]{\bom{#1}}}}
\begin{figure}[ht!]
\centering
\scalebox{0.5}{\input{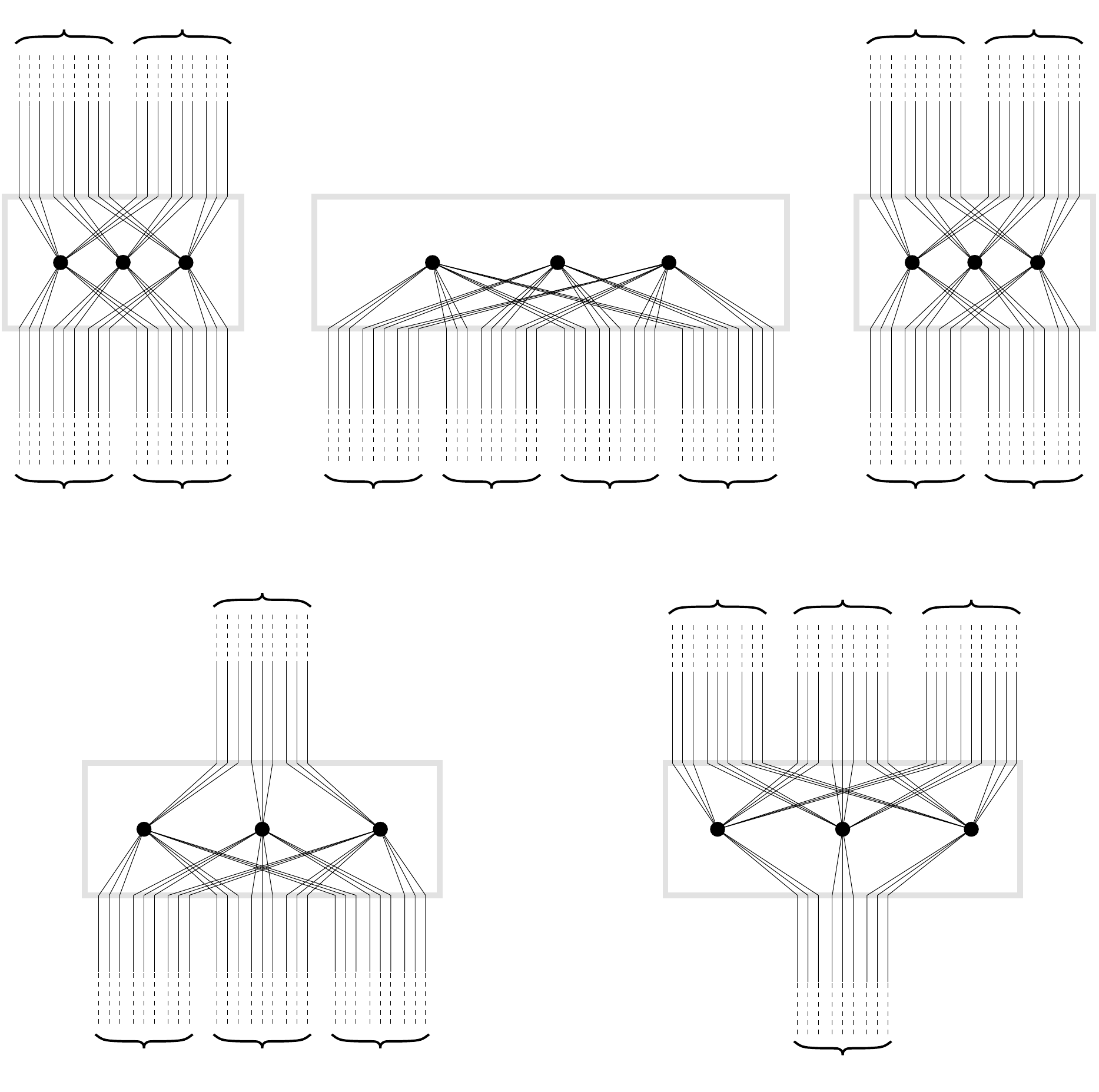_t}}
\caption{We illustrate, for each pair $\bom{i},\bom{j}$ of partite classes, whether the edges coming from class $\bom{j}$ arrive to class $\bom{i}$ from above or from below, in the drawing in Figure~\ref{fig:180}.}
\label{fig:195}
\end{figure}

We now note that for each $i\in[5]$ the function $\sigma$ naturally partitions $[5]\setminus\{i\}$ into two sets:
\begin{align*}
\sigma^+(i)&=\{j\in[5]\setminus\{i\} \, \bigl| \, \sigma(j,i)=1\},\\
\sigma_-(i)&=\{j\in[5]\setminus\{i\} \, \bigl| \, \sigma(j,i)=-1\}.
\end{align*}

Loosely speaking, $j$ is in $\sigma^+(i)$ if the edges from $\bom{j}$ arrive to $\bom{i}$ ``from above'', and $j$ is in $\sigma_-(i)$ if these edges arrive ``from below''.

We note that having $\sigma^+(i)$ and $\sigma_-(i)$ for each $i\in[5]$ we can recover the entire function $\sigma$, as
\[
\sigma(j,i) = 
\begin{cases}
\phantom{-}1, & \textsl{if } j\in \sigma^+(i), \\
-1, & \textsl{if } j\in \sigma_-(i).
\end{cases}
\]


In our running example from Figure~\ref{fig:180} we have the following:

\begin{enumerate}

\item[(i)] $\sigma^+(1)=\{2,3\}$ and $\sigma_-(1)=\{4,5\}$;

\item[(ii)] $\sigma^+(2)=\emptyset$ and $\sigma_-(2)=\{1,3,4,5\}$;

\item[(iii)] $\sigma^+(3)=\{1,2\}$ and $\sigma_-(3)=\{4,5\}$;

\item[(iv)] $\sigma^+(4)=\{2\}$ and $\sigma_-(4)=\{1,3,5\}$; and

\item[(v)] $\sigma^+(5)=\{1,3,4\}$ and $\sigma_-(5)=\{2\}$.

\end{enumerate}

Now having $\sigma^+(i)$ and $\sigma_-(i)$ for each $i\in[5]$ is not enough to recreate all the information in Figure~\ref{fig:180}. Take again for instance the case $i=1$. Besides knowing that classes $\bom{2}$ and $\bom{3}$ ``arrive at class $\bom{1}$ from above'', we need to specify in which order they do so. In order to have a convention we use the gray rectangle that encloses $\bom{1}$ in that figure (see also Figures~\ref{fig:225} and~\ref{fig:195} for better views). 

Regarding the classes that arrive from above, we record the order in which they hit the gray rectangle as we traverse the top side of the rectangle {\em from left to right}. In this case, we encounter the edges coming from $\bom{3}$ first, and then the edges coming from $\bom{2}$. Thus the set $\sigma^+(1)=\{2,3\}$ yields the permutation $\perm{3,2}$. We use $1^+$ to denote this permutation of $\sigma^+(1)$.

Regarding the classes that arrive from below, we record the order in which they hit the gray rectangle as we traverse the bottom side of the rectangle {\em from right to left}. In this case, we encounter the edges coming from $\bom{4}$ first, and then the edges coming from $\bom{5}$. Thus the set $\sigma_-(1)=\{4,5\}$ yields the permutation $\perm{4,5}$. We use $1_-$ to denote this permutation of $\sigma_-(1)$.

Thus in our running example we have $1^+=\perm{3,2}$ and $1_-=\perm{4,5}$. Performing the same procedure for $i=2,3,4,5$ we obtain the following refined information from the drawing in Figure~\ref{fig:180} (again, Figure~\ref{fig:195} is very helpful in this task):

\begin{enumerate}

\item[(1)] $1^+=\perm{3,2}$ and $1_-=\perm{4,5}$;

\item[(2)] $2^+=\perm{}$ and $2_-=\perm{3,1,5,4}$;

\item[(3)] $3^+=\perm{1,2}$ and $3_-=\perm{5,4}$;

\item[(4)] $4^+=\perm{2}$ and $4_-=\perm{3,5,1}$;

\item[(5)] $5^+=\perm{3,1,4}$ and $5_-=\perm{2}$.

\end{enumerate}

\blueit{In Section~\ref{sub:canonical} we shall call the collection $\bigl((1^+,1_-),(2^+,2_-),(3^+,3_-),(4^+,4_-),(5^+,5_-)\bigr)$ a {\em template} for this particular canonical drawing. We close this section by showing that this collection of permutations allows us to reconstruct the information given in (L1)--(L5).}



\blueit{For each $i\in[5]$ let $\perm{i^1,\ldots,i^{|\sigma^+(i)|}}$ be the permutation $i^+$. From this permutation we create a cyclic permutation starting with $\bom{i}$ and then including $\bom{i^1},\ldots,\bom{i^{|\sigma^+(i)|}}$ in this order, but placing the sign $\sigma(i,i^j)$ in front of $\bom{i^j}$ for $j=1,\ldots,|\sigma^+(i)|$. That is, from $i^+=\perm{i^1,\ldots,i^{|\sigma^+(i)|}}$ we get the cyclic permutation $\cycper{\bom{i}\cdot \sigma(i,i^1)\, \bom{i^1}\cdot \cdots \cdot \sigma(i,i^{|\sigma^+(i)|})\, \bom{i^{|\sigma^+(i)|}}}$.}

\blueit{For instance, the permutation $1^+=\perm{3,2}$ induces the cyclic permutation $\cycper{\bom{1}\cdot \sigma(1,3)\,\bom{3} \cdot \sigma(1,2)\, \bom{2}}$. Now $\sigma(1,3)=1$ (since $1\in 3^+$) and $\sigma(1,2)=-1$ (since $1\in 2_{-}$), and so this cyclic permutation is $\cycper{\bom{1}\cdot \bom{3} \cdot \Rev{\bom{2}}}$, as in (L1).}

\blueit{Similarly, let $\perm{i_1,\ldots,i_{|\sigma_{-}(i)|}}$ be the permutation $i_{-}$. From this permutation we create the cyclic permutation that starts with $\Rev{\bom{i}}$ and includes $\bom{i_1},\ldots,\bom{i_{|\sigma_{-}(i)|}}$ in this order, but placing the sign $\sigma(i,i_j)$ in front of $\bom{i_j}$ for $j=1,\ldots,|\sigma_{-}(i)|$. That is, from $i_{-}=\perm{i_1,\ldots,i_{|\sigma_{-}(i)|}}$ we get the cyclic permutation $\cycper{\Rev{\bom{i}}\cdot \sigma(i,i_1)\, \bom{i_1}\cdot \cdots \cdot \sigma(i,i_{|\sigma_{-}(i)|})\, \bom{i_{|\sigma_{-}(i)|}}}$.}

\blueit{For instance, $1_{-}=\perm{4,5}$ induces the cyclic permutation $\cycper{\Rev{\bom{1}}\cdot \sigma(1,4)\,\bom{4} \cdot \sigma(1,5)\, \bom{5}}$. Now $\sigma(1,4)=-1$ (since $1\in 4_{-}$) and $\sigma(1,5)=1$ (since $1\in 5^{+}$), and so this cyclic permutation is $\cycper{\Rev{\bom{1}}\cdot \Rev{\bom{4}} \cdot \bom{5}}$, as in (L1).}

\blueit{In totally analogous manner it is easily verified that $2_-=\perm{3,1,5,4}$ induces the cyclic permutation $\cycper{ \Rev{\bom{2}} \cdot \bom{3} \cdot \bom{1} \cdot \Rev{\bom{5}} \cdot \bom{4}  }$ in (L2) (the empty permutation $2^+=\perm{}$ does not induce any cyclic permutation); the permutations $3^+=\perm{1,2}$ and $3_-=\perm{5,4}$ induce the cyclic permutations $\cycper{\bom{3}\cdot \bom{1} \cdot \Rev{\bom{2}}}$ and $\cycper{ \Rev{\bom{3}}\cdot \bom{5} \cdot \Rev{\bom{4}}}$ in (L3), respectively; the permutations $4^+=\perm{2}$ and $4_-=\perm{3,5,1}$ induce the cyclic permutations $\cycper{\bom{4} \cdot \Rev{\bom{2}}}$ and $\cycper{ \Rev{\bom{4}}\cdot \Rev{\bom{3}} \cdot \bom{5} \Rev{\bom{1}} }$ in (L4), respectively; and $5^+=\perm{3,1,4}$ and $5_-=\perm{2}$ induce the cyclic permutations $\cycper{{\bom{5}}{\cdot} {\Rev{\bom{3}}} {\cdot}{\Rev{\bom{1}}}{\cdot}{\Rev{\bom{4}}}}$ and $\cycper{ \Rev{\bom{5}}\cdot \Rev{\bom{2}}}$, respectively.}

\blueit{Therefore, as claimed, with the {\em template} $\bigl((1^+,1_-),(2^+,2_-),(3^+,3_-),(4^+,4_-),(5^+,5_-)\bigr)$ we can reproduce Properties (L1)--(L5), which in turn provide a description of the drawing $L$ in Figure~\ref{fig:180}.}


\subsection{Canonical drawings of $K_n^m$}\label{sub:canonical}

As we did in Section~\ref{subsub:encoding} for the drawing in Figure~\ref{fig:180}, every canonical drawing is encoded by giving for each $i\in[m]$ two (linear) permutations $i^+$ and $i_-$ such that each $j\in[m]\setminus\{i\}$ appears once in exactly one of $i^+$ and $i_-$. In other words, $i^+\cdot i_-$ is a permutation of $[m]\setminus\{i\}$. This is captured in a single entity under the notion of a template, which in turn involves the concept of a sign function.

\begin{definition}[Sign functions]
{\sl Let $m$ be a positive integer. A {\em sign function over $[m]$} (or simply a {\em sign function}, if $m$ is clear in the context) is a function $\sigma$ that assigns to each ordered pair $(j,i)$ of distint $i,j\in[m]$ an integer $\sigma(j,i)$ in $\{-1,1\}$. For each $j\in[m]$ we let $\sigma^+(i):=\{j\in[m]\setminus\{i\}\, | \, \sigma(j,i)=1\}$ and $\sigma_-(i):=\{j\in[m]\setminus\{i\}\, | \, \sigma(j,i)=-1\}$.}
\end{definition}
 
\begin{definition}[Templates]
{\sl Let $m$ be a positive integer, and let $\sigma$ be a sign function over $[m]$. For each $i\in[m]$ let $i^+$ (respectively, $i_-$) be a permutation of $\sigma^+(i)$ (respectively, $\sigma_-(i)$). We say that $\Gamma=\bigl((1^+,1_-),\ldots,(m^+,m_-)\bigr)$ is a {\em template}, and that $\sigma$ is the sign function of the template $\Gamma$.} 
\end{definition}

We are finally ready to define what is a canonical drawing of $K_n^m$.

\begin{definition}[Canonical drawings of $K_n^m$] 
{\sl Let $C$ be a drawing of $K_n^m$ with the partite classes labelled $\bom{1},\ldots,\bom{m}$. We say that $C$ is {\em canonical} if there is a template $\Gamma=\bigl((1^+,1_{-}),(2^+,2_{-}),\ldots,(m^+,$ $m_{-})\bigr)$ with the following property. Let $\sigma$ be the  sign function of $\Gamma$. Then the following holds for each $i\in[m]$.}

{\sl Let $\perm{i^1,\ldots,i^{\sigma^+(i)}}$ be the permutation $i^+$. Then:}

\begin{enumerate}

\item[(C1)] {\sl $C[\edges{\bom{i}}{(\bom{i^1}\cup\cdots\cup\bom{i^{|\sigma^+(i)|}})}]$ is a $\cycper{\bom{i} \cdot \sigma(i,i^1)\, \bom{i^1}\cdot\,\,\,\cdots\,\,\,\cdot \sigma(i,i^{|\sigma^+(i)|})\, \bom{i^{|\sigma^+(i)|}}}$-\outerd.}

\end{enumerate}

{\sl Let $\perm{i_1,\ldots,i_{\sigma_-(i)}}$ be the permutation $i_{-}$. Then:}

\begin{enumerate}

\item[(C2)] {\sl $C[\edges{\bom{i}}{({\bom{i_1}}\cup\cdots\cup\bom{i_{|\sigma_{-}(i)|}})}]$ is a $\cycper{\Rev{\bom{i}} \cdot \sigma(i,i_1)\, \bom{i_1}\cdot\,\,\,\cdots\,\,\,\cdot \sigma(i,i_{|\sigma_{-}(i)|})\, \bom{i_{|\sigma_{-}(i)|}}}$-\outerd.}

\end{enumerate}

{\sl Finally,}

\begin{enumerate}

\item[(C3)] {\sl no edge in $C[\edges{\bom{i}}{(\bom{i^1}\cup\cdots\cup\bom{i^{|\sigma^+(i)|}})}]$ crosses an edge in $C[\edges{\bom{i}}{({\bom{i_1}}\cup\cdots\cup\bom{i_{|\sigma_{-}(i)|}})}]$}.

\end{enumerate}

We note that since $\sigma^+(i)$ is the set $\{i^1,\ldots,i^{|\sigma^+(i)|}\}$ and $\sigma_-(i)$ is the set $\{i_1,\ldots,i_{|\sigma_{-}(i)|}\}$, (C3) can be equivalently paraphrased as follows:

\begin{enumerate}

\item[(C3$'$)] {\sl for all distinct $i,j,k\in [m]$, if $j\in\sigma^+(i)$ and $k\in\sigma_-(i)$, then no edge in $C[\Edges{\bom{i}}{\bom{j}}]$ crosses an edge in $C[\Edges{\bom{i}}{\bom{k}}]$.} 

\end{enumerate}

\end{definition}

Before we state our main result let us capture an important feature of templates.

\begin{lemma}\label{lem:templates1}
A template determines a canonical drawing up to weak isomorphism. That is, if two canonical drawings have the same template then they are weakly isomorphic.
\end{lemma}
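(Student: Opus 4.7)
Let $C$ and $C'$ be canonical drawings of $K_n^m$ sharing the same template $\Gamma = \bigl((1^+, 1_-), \ldots, (m^+, m_-)\bigr)$ with sign function $\sigma$. Since the partite classes of $C$ and $C'$ are identically labelled with the same internal orderings, the natural candidate for a weak isomorphism is the identity map on vertices, which extends canonically to edges; the task reduces to verifying that, for every pair of edges $e, f$, the pair crosses in $C$ iff it crosses in $C'$. The plan is a case analysis on how $e, f$ interact with the partite structure.

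If $e$ and $f$ share an endpoint, simplicity forbids a crossing in either drawing. Suppose next that $e$ and $f$ share a partite class $\bom i$ but no vertex, say $e \in \edges{\bom i}{\bom j}$ and $f \in \edges{\bom i}{\bom k}$. If one of $j, k$ lies in $\sigma^+(i)$ and the other in $\sigma_-(i)$, then (C3) directly forbids a crossing in both drawings. Otherwise both $e$ and $f$ lie in a common \outer subdrawing (the one from (C1) if $j, k \in \sigma^+(i)$, or the one from (C2) if $j, k \in \sigma_-(i)$), whose bounding cyclic order is read off from $\Gamma$ directly; since in any \outer drawing two edges cross iff their endpoints interleave along the bounding curve, the crossing of $e$ and $f$ is a purely combinatorial condition depending only on $\Gamma$.

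The delicate case is when $e \in \edges{\bom i}{\bom j}$ and $f \in \edges{\bom k}{\bom l}$ with $i, j, k, l$ pairwise distinct, because (C1)--(C3) do not directly compare $e$ and $f$. The main observation I would exploit is that (C1)--(C3), taken together for every class, already pin down the rotation at every vertex of the drawing: at $u \in \bom i$ the edges to $\sigma^+(i)$-classes form one cyclic arc of the rotation, in the order determined by $i^+$ and the internal orderings of each destination class; the edges to $\sigma_-(i)$-classes form the complementary arc; and (C3) forces the two groups to be non-interleaved. Having pinned down the rotation system, I would conclude by induction on $m$: the base cases $m \le 3$ admit no pair of disjoint-class edges, and in the inductive step, after peeling off one class and applying the inductive hypothesis to the restricted canonical drawing, the reinsertion of that class is rigid by a Jordan-curve argument using (C1)--(C3) for the reinserted class.

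The main technical obstacle I anticipate is exactly this last rigidity step: showing that once the rotation system is fixed and all within-$\Edges{\bom i}$ crossings agree between $C$ and $C'$, the crossings between disjoint-class edge pairs are also forced. Intuitively this should follow because each \outer subdrawing from (C1) or (C2) imposes rigid spherical regions whose bounding curves separate the sphere into determined zones; Jordan-curve arguments applied across classes in tandem should then force on which side of each bounding curve every remaining vertex lies, pinning down the crossings in question. As an alternative, one could construct an explicit canonical drawing from $\Gamma$ up front and argue by spherical rigidity that every canonical drawing with template $\Gamma$ must agree with it on the crossing relation.
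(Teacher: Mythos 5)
Your Case 1 analysis (edges sharing a partite class $\bom{i}$) matches the paper's proof exactly: split on whether $j,\ell$ both lie in $i^+$, both lie in $i_-$, or one in each, using (C1)/(C2) plus the fact that a \outer drawing's crossings are determined by its bounding order for the first two cases, and (C3) for the mixed case. You also correctly identify the key observation for Case 2 (all four classes distinct): (C1)--(C3) together pin down the rotation at each vertex. This is precisely the paper's move. The paper then picks one representative vertex $v_p$ from each class $\bom{p}$, restricts $C$ and $D$ to the induced $K_m$ on $\{v_1,\ldots,v_m\}$, observes that both restrictions have the same rotation system $\rotvU{v_i}{\cdot}{\,\cdot\,}=\cycper{v_{i^1},\ldots,v_{i^{|\sigma^+(i)|}},v_{i_1},\ldots,v_{i_{|\sigma_-(i)|}}}$, and then invokes the known fact (due to Kyn\v{c}l, cited in the paper) that in a simple drawing of a complete graph the rotation system determines the crossing relation.

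The genuine gap in your proposal is that you do not have (or do not invoke) this last tool, and without it your argument is incomplete. You propose to close Case 2 by an induction on $m$ with a ``Jordan-curve rigidity'' step when a class is reinserted, but you explicitly flag that step as ``the main technical obstacle'' and do not carry it out. That is not a small detail: the whole difficulty of Case 2 is showing that the local constraints (C1)--(C3), which are about edges sharing an endclass, control crossings of edges whose four endclasses are pairwise distinct. The reinsertion step would need to prove exactly this, and nothing in your sketch shows it goes through. Your alternative suggestion (build an explicit drawing from $\Gamma$ and argue rigidity) is likewise only gestured at. The cleanest fix, and what the paper actually does, is: after establishing that the rotation systems of $C[\{v_1,\ldots,v_m\}]$ and $D[\{v_1,\ldots,v_m\}]$ agree, cite the rotation-system-determines-crossings theorem for complete graphs to conclude that $e$ and $e'$ cross in $C$ iff they cross in $D$. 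If you do not want to cite that result, you would need to prove it (or an ad hoc version of it) yourself, which is where your proposal currently stops short.
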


We defer the proof of this lemma to a later section in this paper (namely Section~\ref{sec:prooftemplates1}), so we can now proceed to state our main result.


\subsection{Our main result}\label{sub:main}

As we mentioned above, our main result is that canonical drawings are precisely the unavoidable drawings of complete multipartite graphs:

\begin{theorem}[The unavoidable drawings of $K_n^m$]\label{thm:main}
Let $m\ge 2$, $N > n\ge 3$ be  integers. If $N$ is sufficiently large compared to $n$, then every drawing of $K_N^m$ contains a drawing weakly isomorphic to a canonical drawing of $K_n^m$.
\end{theorem}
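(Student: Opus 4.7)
The plan is to iteratively apply Negami's theorem (Theorem~\ref{thm:negami2}) to pairs of partite classes, and then use a multidimensional Ramsey argument on triples to produce consistent orderings and the crossing-disjoint property. Starting with $\bom{i}^{(0)} := \BO{i}$ for each $i \in [m]$, process the $\binom{m}{2}$ pairs in sequence: for pair $(i,j)$ apply Theorem~\ref{thm:negami2} to $D[\edges{\bom{i}^{(t-1)}}{\bom{j}^{(t-1)}}]$ and keep subpermutations $\bom{i}^{(t)}, \bom{j}^{(t)}$ on which the restricted drawing is a $\cycper{\sigma(j,i)\,\bom{i}^{(t)} \cdot \sigma(i,j)\,\bom{j}^{(t)}}$-\outerd, which simultaneously defines the sign function values $\sigma(i,j),\sigma(j,i)$. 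Since a subpermutation of an outer drawing is again an outer drawing with the induced bounding order, the outer-drawing property for previously-processed pairs is preserved under subsequent restrictions. Provided $N$ is sufficiently large to absorb the $\binom{m}{2}$-fold iteration of Negami's bound, we are left with subpermutations $\bom{1}^*,\ldots,\bom{m}^*$ of size $N_1 \gg n$ and a well-defined sign function $\sigma$ over $[m]$ such that $D[\edges{\bom{i}^*}{\bom{j}^*}]$ is a $\cycper{\sigma(j,i)\,\bom{i}^* \cdot \sigma(i,j)\,\bom{j}^*}$-\outerd for every $i \neq j$.

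Next, for each $i$ and each pair $j, k$ of other classes on the \emph{same} side of $\sigma$ at $i$, one of the orderings $\perm{j,k}$ or $\perm{k,j}$ describes the relative position along the bounding curve at which edges of $\edges{\bom{i}^*}{\bom{j}^*}$ and $\edges{\bom{i}^*}{\bom{k}^*}$ reach $\bom{i}^*$. Zooming in on a triple $(v, u, w) \in \bom{i}^* \times \bom{j}^* \times \bom{k}^*$, the subdrawing on these three vertices belongs to a constant-size list of combinatorial types (encoding crossings and the local rotation at $v$). A 3-uniform multipartite hypergraph Ramsey argument, with colors indexed by triples of class labels together with the type of the induced triangle, produces further subpermutations of size $n$ on which, for each fixed triple of classes, every such triangle has the same type. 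The uniform type assembles, for each $i$, into a linear ordering $i^+$ on $\sigma^+(i)$ and a linear ordering $i_-$ on $\sigma_-(i)$, producing a template $\Gamma$ whose pairwise structure already witnesses conditions (C1) and (C2).

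The main obstacle is condition (C3). For a triple $i,j,k$ with $j \in \sigma^+(i)$ and $k \in \sigma_-(i)$, I would argue that the uniform triangle-type provided by the Ramsey step must be the crossing-disjoint one. Indeed, because $\sigma(j,i) = +1$ and $\sigma(k,i) = -1$, the outer bounding curves witnessing the two pairwise outer drawings from Step~1 place $\bom{j}^*$ on one side of $\bom{i}^*$ and $\bom{k}^*$ on the opposite side. A crossing between an edge $e \in \edges{\bom{i}^*}{\bom{j}^*}$ and an edge $f \in \edges{\bom{i}^*}{\bom{k}^*}$ would, by the uniformity of the type, be propagated to every such pair $(e,f)$; chasing how these parallel crossings interact with the pairwise outer bounding curve for $(j,k)$ then yields a $K_{2,3}$-style obstruction forced into one of the already-fixed pairwise outer subdrawings, a contradiction. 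Hence the uniform type is crossing-disjoint, (C3) holds globally, and combining (C1), (C2), (C3) the restricted subdrawing is a canonical drawing with template $\Gamma$; Lemma~\ref{lem:templates1} then yields weak isomorphism to the canonical drawing encoded by $\Gamma$, completing the proof.
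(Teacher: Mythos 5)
Your overall strategy matches the paper's: iterate Negami's theorem over pairs to extract a pairwise outer subdrawing (this is exactly Corollary~\ref{cor:negami}), then use a Ramsey-type argument to force a uniform local structure, read off a template, and finish via Lemma~\ref{lem:templates1}. However, the Ramsey step as you describe it has a concrete gap: a $3$-uniform hypergraph with one vertex per class cannot see the relevant information. Three vertices $(v,u,w)$ with one in each of $\bom{i},\bom{j},\bom{k}$ span a triangle, and in a simple drawing a triangle has no crossings, while the rotation at $v$ of a two-element set $\{u,w\}$ carries no information either (a cyclic order of two elements is trivial). The phenomenon you need to control --- whether an edge $ab \in \edges{\bom{i}}{\bom{j}}$ crosses a disjoint edge $a'c \in \edges{\bom{i}}{\bom{k}}$, and with which rotation at the crossing --- inherently involves \emph{two} vertices $a,a'$ in $\bom{i}$. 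The paper therefore works with a $4$-uniform auxiliary hypergraph on $\{a,a',b,c\}$ (one colour class per $K_{2,2}$ crossing type, plus a no-crossing colour $\eta_0$), and then draws its conclusions by classifying the small drawings of $K_{2,3}$ and $K_{3,3}$ that can occur after making the colour uniform (Lemmas~\ref{lem:big} and Propositions~\ref{pro:core},~\ref{pro:core2}). In particular, the derivation that the uniform colour is crossing-free when $j\in\sigma^+(i)$, $k\in\sigma_-(i)$ is exactly Proposition~\ref{pro:core2}, and it hinges on the $K_{2,3}$ classification under \emph{opposite} rotations at the two $2$-class vertices, not on a ``parallel crossing'' propagation argument through the $(j,k)$ bounding curve. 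Likewise, the fact that the uniformly coloured types assemble into a genuine linear order $i^+$ on $\sigma^+(i)$ (and $i_-$ on $\sigma_-(i)$) is not automatic: the paper must prove both that each pair is ordered (via a $K_{2,3}$ argument) and that the resulting relation is transitive (via a $K_{3,3}$ argument, Lemma~\ref{lem:big}(II)). Your sketch conflates all of this into a single ``uniform triangle type'' which, as explained, does not carry enough data; to repair the proof you would need to replace the $3$-uniform step by the paper's $4$-uniform one and carry out the $K_{2,3}$/$K_{3,3}$ case analyses.
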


Before we proceed to the proof of Theorem~\ref{thm:main} let us raise an important issue. The theorem identifies the unavoidable drawings of $K_n^m$, in the sense that it establishes that these are precisely the canonical drawings. Since each canonical drawing gets determined by a template, for a full identification one needs to address a crucial \blueit{question}: when is a template the template of a canonical drawing? \blueit{We close this section with a discussion on this question, which we fully answer in Section~\ref{sec:realizable}.}

\subsection{Realizable and non-realizable templates}\label{sub:realizable}

Let us recall that templates are defined in purely combinatorial terms. That is, if for each $i\in [m]$ we choose any permutations $i^+$ and $i_{-}$ such that $i^+ \cdot i_{-}$ is a permutation of $[m]\setminus\{i\}$, then a result we obtain a (combinatorially) valid template. On the other hand, there is no reason for such a template to be the template of a drawing of $K_n^m$.

\blueit{Let us say that a template is {\em realizable} if it is the template of some drawing of a complete multipartite graph $K_n^m$. The previous discussion revolves around the possible existence of non-realizable templates. Our next statement gives an explicit example of such a template.}

\begin{observation}\label{obs:nonrealizable}
\textsl{\blueit{The template $\Gamma=\bigl((1^+,1_{-}),(2^+,2_{-}),(3^+,3_{-}),(4^+,4_{-})\bigr)$, where \[1^+=\perm{2,3,4}, 1_{-}=\perm{ }, 2^+=\perm{3,4,1},2_{-}=\perm{ }, 3^+=\perm{4,1,2},3_{-}=\perm{ }, 4^+=\perm{1,3,2} \textrm{ and } 4_{-}=\perm{ }\] is not realizable. That is, it is not the template of any drawing of $K_n^4$ for any positive integer $n$.}}
\end{observation}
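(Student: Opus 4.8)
The plan is to assume, for contradiction, that there is some drawing $D$ of $K_n^4$ realizing $\Gamma$, and then derive an impossible configuration from the interaction of the four \outer subdrawings $D[\Edges{\bom{i}}]$ forced by (C1)--(C3). The key observation is that in $\Gamma$ every $i_{-}$ is empty, so by (C2) each $D[\Edges{\bom{i}}]$ is a single \outer drawing with bounding order beginning with $\bom{i}$; concretely $D[\Edges{\bom{1}}]$ is a $\cycper{\bom{1}\cdot\bom{2}\cdot\bom{3}\cdot\bom{4}}$-\outerd, $D[\Edges{\bom{2}}]$ is a $\cycper{\bom{2}\cdot\bom{3}\cdot\bom{4}\cdot\bom{1}}$-\outerd, $D[\Edges{\bom{3}}]$ is a $\cycper{\bom{3}\cdot\bom{4}\cdot\bom{1}\cdot\bom{2}}$-\outerd, but $D[\Edges{\bom{4}}]$ is a $\cycper{\bom{4}\cdot\bom{1}\cdot\bom{3}\cdot\bom{2}}$-\outerd. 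The first three are ``rotations'' of one cyclic pattern on the four classes, while the fourth swaps $\bom{2}$ and $\bom{3}$; the contradiction should come from the fact that these local orders cannot all be induced by one global drawing.

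First I would pin down what each \outer condition says about the rotation system (the cyclic order of edge-ends) at each vertex, or equivalently about the cyclic order in which the other classes are ``seen'' from a given class. From $D[\Edges{\bom{1}}]$ being a $\cycper{\bom{1}\cdot\bom{2}\cdot\bom{3}\cdot\bom{4}}$-\outerd, restricting to the subdrawing $D[\bom{1}\cup\bom{2}\cup\bom{3}]$, the edges among these three classes form a $\cycper{\bom{1}\cdot\bom{2}\cdot\bom{3}}$-\outerd; doing the same with $D[\Edges{\bom{2}}]$ gives that $D[\bom{1}\cup\bom{2}\cup\bom{3}]$ is also a $\cycper{\bom{2}\cdot\bom{3}\cdot\bom{1}}$-\outerd, which is consistent (same cyclic order). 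The point of contention is the triple $\{\bom{1},\bom{2},\bom{3}\}$ seen through $\bom{4}$: $D[\Edges{\bom{4}}]$ forces $D[\bom{4}\cup\bom{1}\cup\bom{2}]$, $D[\bom{4}\cup\bom{1}\cup\bom{3}]$, $D[\bom{4}\cup\bom{2}\cup\bom{3}]$ to have specific bounding orders, and I would compare these with what $D[\Edges{\bom{1}}]$, $D[\Edges{\bom{2}}]$, $D[\Edges{\bom{3}}]$ say about those same subdrawings. For instance $D[\Edges{\bom{1}}]$ gives that $D[\bom{1}\cup\bom{3}\cup\bom{4}]$ is a $\cycper{\bom{1}\cdot\bom{3}\cdot\bom{4}}$-\outerd, i.e.\ a $\cycper{\bom{3}\cdot\bom{4}\cdot\bom{1}}$-\outerd, whereas $D[\Edges{\bom{4}}]$ gives it is a $\cycper{\bom{4}\cdot\bom{1}\cdot\bom{3}}$-\outerd, i.e.\ a $\cycper{\bom{4}\cdot\bom{1}\cdot\bom{3}}=\cycper{\bom{1}\cdot\bom{3}\cdot\bom{4}}$-\outerd --- the same. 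So the two-class-plus-$\bom{4}$ restrictions are individually fine; the obstruction must be genuinely about combining all four, and I expect it to boil down to a parity/orientation argument on the cyclic orders, analogous to the classical fact that you cannot consistently orient the edges of $K_4$ so that every triangle is cyclically oriented (a ``no transitive tournament / no cyclic tournament'' type obstruction). Making this precise is the crux: I would encode, for each ordered pair $(i,j)$, whether $\bom{j}$ appears in ``clockwise'' or ``counterclockwise'' position relative to $\bom{i}$ within the relevant \outer drawing, extract a tournament (or a $\{\pm1\}$-labelling of the edges of $K_4$) from the template, and show that the template's labelling is not one of the labellings realizable by an actual \outer-compatible assignment.

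The cleanest route may be to use a small concrete sub-configuration: pick one vertex from each of the four classes, getting a drawing of $K_4$ (with one vertex per class, $K_n^4$ restricted is $K_4$), and argue that (C1)--(C3) force this $K_4$-subdrawing to simultaneously satisfy the four local \outer constraints, which no drawing of $K_4$ can. Since every drawing of $K_4$ is weakly isomorphic to one of a very short explicit list (there are only two: the ``convex'' one with no crossing pair forced and the one with a single crossing, up to relabelling, and their rotation systems are completely enumerable), I would enumerate these, read off for each the bounding orders of $D[\Edges{\bom{i}}]$ that it induces (here $\Edges{\bom{i}}$ is just the three edges at vertex $i$, so this is just the rotation at each vertex), and check that none of them matches the pattern $\bigl(\cycper{1234},\cycper{2341},\cycper{3412},\cycper{4132}\bigr)$ demanded by $\Gamma$. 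The main obstacle I anticipate is bookkeeping: being careful about the traversal-direction convention (``edges to the right'') so that the cyclic orders extracted from the four \outer drawings are all expressed in a common orientation, and making sure that passing from $D[\Edges{\bom{i}}]$ in $K_n^4$ to the rotation at a single chosen vertex is faithful. Once the conventions are fixed, the contradiction should be a finite check; the conceptual content is simply that the ``twist'' at class $\bom{4}$ (the transposition of $\bom{2}$ and $\bom{3}$ relative to the cyclic pattern at $\bom{1},\bom{2},\bom{3}$) is globally inconsistent.
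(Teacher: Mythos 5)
Your final route --- restrict to the $K_4$ obtained by taking one vertex from each class, read off the rotation at each vertex from the forced \outer bounding orders (giving $\cycper{2,3,4}$, $\cycper{3,4,1}$, $\cycper{4,1,2}$, $\cycper{1,3,2}$), and conclude by the finite check that no simple drawing of $K_4$ has this rotation system --- is exactly the paper's proof. The exploratory tournament/parity discussion in your middle paragraph is unnecessary, but the argument you settle on is correct and matches the paper's.
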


\begin{proof}
For a contradiction suppose that $\Gamma$ is the template of a drawing of $K_n^4$ for some integer $n$. In particular, this implies that $\Gamma$ is the template of a drawing $C$ of $K_2^4$.

For each $i\in [4]$ let partite class $\bom{i}$ be the permutation $\perm{i(1),i(2)}$. Note that for this template $\Gamma$ the sign function $\sigma$ is quite simple: since $1_-,2_-,3_-$, and $4_-$ are all empty permutations, it follows that $\sigma(j,i)=+1$ for each pair of distinct $i,j\in[4]$.

\blueit{We derive a contradiction by focusing on the restriction $C'$ of $C$ to the $K_4$ induced by the four vertices $1(1),2(1), 3(1)$, and $4(1)$ (we choose $i(1)$ for $i=1,2,3,4$ for definiteness, but we might have chosen any vertex in each partite class). We shall show that the assumption that $\Gamma$ is the template of $C$ implies that the rotation system of $C'$ is not realizable, that is, that this cannot be the rotation system of a simple drawing of $K_4$.}

\blueit{To see this let us start by noting that Property (C1) for $i=1$ means that $C[\edges{\bom{1}}{(\bom{2}\cup \bom{3}\cup \bom{4})} ]$ is a $\cycper{\bom{1}\cdot \bom{2} \cdot \bom{3}\cdot \bom{4}}$-\outerd, as illustrated in Figure~\ref{fig:705}. In particular, the rotation at vertex $1(1)$ in $C$ is $\cycper{2(1),2(2),3(1),3(2),4(1),4(2)}$. Therefore the rotation at vertex $1(1)$ in $C'$ is $\cycper{2(1),3(1),4(1)}$.}

\begin{figure}[ht!]
\def\ta#1{{\Scale[2.2]{#1}}}
\def\tb#1{{\Scale[0.8]{#1}}}
\centering
\scalebox{0.35}{\input{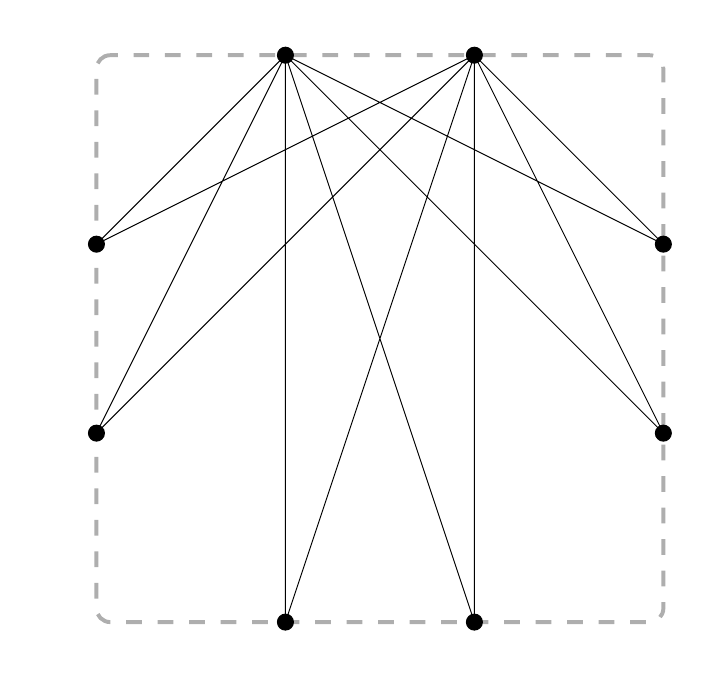_t}}
\caption{{Illustration of the proof of Observation~\ref{obs:nonrealizable}}}
\label{fig:705}
\end{figure}

\blueit{Analogous arguments applying Property (C1) for $i=2,3$, and $4$ imply that in $C'$ (a) the rotation at vertex $2(1)$ is $\cycper{3(1),4(1),1(1)}$; (b) the rotation at $3(1)$ is $\cycper{4(1),1(1),2(1)}$; and the rotation at $4(1)$ is $\cycper{1(1),3(1),2(1)}$. Thus the rotation system of $C'$ is $\{\cycper{2(1),3(1),4(1)}, \cycper{3(1),4(1),1(1)}$, $\cycper{4(1),1(1),2(1)}, \cycper{1(1),3(1),2(1)}\}$. This yields the required contradiction, as it is easy to see that no simple drawing of $K_4$ with its vertices labelled $1(1),2(1),3(1),4(1)$ has this rotation system.}
\end{proof}

\blueit{In Section~\ref{sec:realizable} we give necessary and sufficient conditions for a template to be realizable.}



\section{Reducing Theorem~\ref{thm:main} to two propositions}\label{sec:proofmain}

\blueit{In this section we put forward three statements (a corollary of Theorem~\ref{thm:negami2} and two propositions) and show that they imply Theorem~\ref{thm:main}. The proofs of the propositions will be deferred to later sections.}

We start with an easy consequence of Theorem~\ref{thm:negami2}. Loosely speaking, if $N \ge N_1$ are integers and $N$ is sufficiently large compared to $N_1$, then every drawing of $K_N^m$ contains a subdrawing of $K_{N_1}^m$ that is ``pairwise \outer'', that is, a drawing in which the restriction to any two partite classes is as illustrated in (one of the cases in) Figure~\ref{fig:445}. 

Formally, let $J$ be a drawing of $K_n^m$ with the partite classes labelled $\bom{1},\ldots,\bom{m}$. We say that $J$ is {\em pairwise \outer} if for all distinct $i,j\in[m]$ we have that $J[\Edges{\bom{i}}{\bom{j}}]$ is either a $\cycper{\bom{i}\,\cdot\,\bom{j}}$-drawing, or a $\cycper{\Rev{\bom{i}}\,\cdot\,\bom{j}}$-drawing, or a $\cycper{\bom{i}\,\cdot\,\Rev{\bom{j}}}$-drawing, or a $\cycper{\Rev{\bom{i}}\,\cdot\,\Rev{\bom{j}}}$-drawing.

An equivalent, more compact way to say this is that $J$ is \textit{pairwise \outer} if there is a sign function $\sigma$ over $[m]$ such that $J[\Edges{\bom{i}}{\bom{j}}]$ is a $\cycper{\sigma(j,i)\,\bom{i} \,\cdot\, \sigma(i,j)\,\bom{j}}$-drawing, for all distinct $i,j\in[m]$. We say that $\sigma$ is {\em the pairwise sign function} of $J$. 

An iterative application of Theorem~\ref{thm:negami2} (over each pair of distinct integers $i,j\in[m]$) yields the following.

\begin{corollary}[Follows from Theorem~\ref{thm:negami2}]\label{cor:negami}
Let $m\ge 2$, $Q > q \ge 1$ be integers. Let $J$ be a drawing of $K_Q^{m}$ with the partite classes labelled $\BO{1},\ldots,\BO{m}$. If $Q$ is sufficiently large compared to $q$, then there exist $\bom{1}\preceq\BO{1},\ldots,\bom{m}\preceq\BO{m}$ with  $|\bom{1}|=\cdots=|\bom{m}|=q$, such that $J[\bom{1}\cup\cdots\cup\bom{m}]$ is a pairwise \outer drawing.
\end{corollary}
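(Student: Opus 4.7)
The plan is to apply Theorem~\ref{thm:negami2} iteratively, once for each of the $T := \binom{m}{2}$ unordered pairs of partite classes. Enumerate those pairs as $\{i_1,j_1\},\ldots,\{i_T,j_T\}$ and choose a nondecreasing sequence of integers $q = s_T \le s_{T-1} \le \cdots \le s_0$ in which each $s_{k-1}$ is large enough that Theorem~\ref{thm:negami2} produces subpermutations of size $s_k$ from any drawing of $K_{s_{k-1},s_{k-1}}$. Since this is a finite chain of dependencies starting from $q$, the resulting bound $Q \ge s_0$ is a function of $q$ and $m$ alone, which is what ``$Q$ sufficiently large compared to $q$'' means.

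I will set $\bom{\ell}^{(0)} := \BO{\ell}$ for every $\ell \in [m]$ and process steps $k = 1,\ldots,T$ in order, maintaining the invariant that $|\bom{\ell}^{(k-1)}| \ge s_{k-1}$ for every $\ell$. This holds at $k=1$ because $Q \ge s_0$, and is preserved inductively: whenever class $\ell$ is refined at some step $k' < k$ its new size becomes $s_{k'} \ge s_{k-1}$. At step $k$, I truncate $\bom{i_k}^{(k-1)}$ and $\bom{j_k}^{(k-1)}$ down to $s_{k-1}$ elements each, apply Theorem~\ref{thm:negami2} to $J[\edges{\bom{i_k}^{(k-1)}}{\bom{j_k}^{(k-1)}}]$ to obtain $\bom{i_k}^{(k)} \preceq \bom{i_k}^{(k-1)}$ and $\bom{j_k}^{(k)} \preceq \bom{j_k}^{(k-1)}$ of size $s_k$ together with signs $\sigma(j_k,i_k),\sigma(i_k,j_k) \in \{-1,+1\}$ such that $J[\edges{\bom{i_k}^{(k)}}{\bom{j_k}^{(k)}}]$ is a $\cycper{\sigma(j_k,i_k)\,\bom{i_k}^{(k)} \cdot \sigma(i_k,j_k)\,\bom{j_k}^{(k)}}$-\outerd; the remaining classes are left unchanged, i.e., $\bom{\ell}^{(k)}:=\bom{\ell}^{(k-1)}$ for $\ell\notin\{i_k,j_k\}$.

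The one substantive point is that a $\cycper{\epsilon_a\,\bom{a} \cdot \epsilon_b\,\bom{b}}$-\outerd\ structure of a bipartite subdrawing is preserved under taking subpermutations of both classes: given the bounding curve $\omega$ for the larger subdrawing, one can deform it locally in small neighborhoods of the deleted vertices to produce a bounding curve for the smaller subdrawing, whose bounding order is the restriction of the original cyclic permutation to the surviving vertices and is therefore still of the form $\cycper{\epsilon_a\,\bom{a}' \cdot \epsilon_b\,\bom{b}'}$. Consequently, the outer structure imposed at each step $k$ survives all later refinements, so after step $T$ every pair of classes is simultaneously handled. Taking any size-$q$ subpermutation $\bom{\ell} \preceq \bom{\ell}^{(T)}$ for each $\ell \in [m]$ then produces the required pairwise \outer subdrawing $J[\bom{1}\cup\cdots\cup\bom{m}]$ with pairwise sign function $\sigma$.

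The argument is essentially bookkeeping on class sizes, so I expect no real obstacle; the only geometric ingredient, the subpermutation-invariance of outerness, is immediate.
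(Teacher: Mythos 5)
Your proof is correct and follows exactly the route the paper intends (the paper gives no details beyond ``an iterative application of Theorem~\ref{thm:negami2} over each pair''): iterate over the $\binom{m}{2}$ pairs with a decreasing chain of sizes, using the fact that \outerity of $J[\edges{\bom{a}}{\bom{b}}]$ with a given signed bounding order is inherited by subpermutations — a fact that also follows cleanly from the natural-pair characterization in Lemma~\ref{lem:outerplanar}, since naturality is manifestly hereditary.
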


We now state two propositions that are at the heart of the proof of Theorem~\ref{thm:main}. The proofs of these propositions are deferred to Sections~\ref{sec:core} and~\ref{sec:core2}, respectively. 

\begin{proposition}\label{pro:core}
Let $Q > q \ge 1$ be integers. Let $\oA,\oB_1,\ldots,\oB_r$ be pairwise disjoint permutations of vertices in a graph $G$, with $|\oA|=|\oB_1|=\cdots=|\oB_r|=Q$, such that every vertex in $\oA$ is adjacent to every vertex in $\oB_1\cup\cdots\cup \oB_r$. Suppose that $J$ is a drawing of $G$ such that $J[\edges{\oA}{\oB_i}]$ is a $\cycper{\oA \cdot \oB_i}$-{\outerd} for all $1\le i \le r$. If $Q$ is sufficiently large compared to $q$, then there exist subpermutations $A\preceq \oA$ and $B_i\preceq \oB_i$ for $i=1,\ldots,r$, with $|A|=|B_1|=\cdots=|B_{r}|=q$ and a permutation $\pi=\perm{\pi(1),\ldots,\pi(r)}$ of $[r]$ such that $J[\edges{A}{(B_1\cup \,\,\cdots\,\,\cup B_r)}]$ is a $\cycper{A\cdot B_{\pi(1)}\cdot \,\,\cdots\,\,\cdot B_{\pi(r)}}$-\outerd.
\end{proposition}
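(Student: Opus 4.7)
My plan is to prove the proposition by induction on $r$, combining Ramsey-type arguments with Erd\H os--Szekeres-style extractions. The base case $r=1$ is immediate: for any $q$-element subpermutations $A\preceq\oA$ and $B_1\preceq\oB_1$, the drawing $J[\edges{A}{B_1}]$ inherits the $\cycper{A\cdot B_1}$-outer structure from $J[\edges{\oA}{\oB_1}]$ by restriction, with $\pi=\perm{1}$.

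For the inductive step, assume the proposition holds for $r-1$ with some bound $f_{r-1}$, and choose an intermediate size $q_1\gg q$ so that $Q\ge f_{r-1}(q_1)$. Applying the induction to $(\oA,\oB_1,\ldots,\oB_{r-1})$ yields intermediate subpermutations $A^{\ast}\preceq\oA$ and $B_i^{\ast}\preceq\oB_i$ (for $i<r$), each of size $q_1$, and a permutation $\pi'$ of $[r-1]$, such that $J[\edges{A^{\ast}}{(B_1^{\ast}\cup\cdots\cup B_{r-1}^{\ast})}]$ is $\cycper{A^{\ast}\cdot B_{\pi'(1)}^{\ast}\cdot\cdots\cdot B_{\pi'(r-1)}^{\ast}}$-outer. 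Since the outer property is preserved under restriction, $J[\edges{A^{\ast}}{\oB_r}]$ is $\cycper{A^{\ast}\cdot\oB_r}$-outer. What remains is to merge $\oB_r$ into the established cyclic order of $B_i^{\ast}$-blocks.

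The merge step exploits the topological fact that, in a $\cycper{\oA'\cdot\oB'}$-outer drawing, the rotation at each $a\in\oA'$ restricted to $\oB'$ is a fixed cyclic sub-permutation determined by the bounding order. Consequently, at each $a\in A^{\ast}$ the rotation on the full edge set to $B_1^{\ast}\cup\cdots\cup B_{r-1}^{\ast}\cup\oB_r$ already has the $B_i^{\ast}$-edges arranged in blocks in the fixed cyclic order $B_{\pi'(1)}^{\ast},\ldots,B_{\pi'(r-1)}^{\ast}$, while the $\oB_r$-edges appear in a fixed sub-order and weave through these blocks in some $a$-dependent pattern. I would iterate an Erd\H os--Szekeres-style extraction once for each pair $(B_i^{\ast},\oB_r)$, relying on the key combinatorial fact that any cyclic sequence with at least $2q$ symbols of each of two types contains a sub-cyclic-sequence of shape $1^q 2^q$ (split the cycle into two complementary arcs and choose $q$ of each type from opposite arcs). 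A Ramsey-type version of this fact, applied simultaneously across all $a\in A^{\ast}$ via iterated pigeonholing, produces sub-subpermutations $A\preceq A^{\ast}$, $B_i\preceq B_i^{\ast}$ (for $i<r$), and $B_r\preceq\oB_r$, each of size $q$, such that at every $a\in A$ the chosen $B_r$-edges form a single block not interleaved with any $B_i$-block. A final pigeonhole over the $r-1$ possible cyclic positions of this $B_r$-block among the $B_i$-blocks then determines a uniform insertion position and yields the permutation $\pi$ of $[r]$.

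Finally, to conclude that this local block structure at every $a\in A$ promotes to the global $\cycper{A\cdot B_{\pi(1)}\cdot\cdots\cdot B_{\pi(r)}}$-outer property, I would invoke a topological gluing argument: splice the $B_r$-portion of the bounding curve of $J[\edges{A}{B_r}]$ into the chosen gap of the bounding curve of the $(r-1)$-class outer drawing, using the block structure at each $a$ to verify that no edges are crossed. The main obstacle will be the uniform Ramsey-style extraction in the merge step: lifting the single-vertex $1^q 2^q$ extraction to a statement that holds simultaneously across all $a\in A^{\ast}$, and iterating this for each of the $r-1$ pairs, requires careful bookkeeping and yields a bound $Q$ that grows polynomially in $q$ for each fixed $r$.
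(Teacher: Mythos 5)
Your inductive organization (build the \outer drawing on $r-1$ classes, then merge $\oB_r$) is a legitimate alternative to the paper's route, which instead establishes a pairwise order on the classes via Ramsey, proves that this order is transitive, and then concatenates. But your merge step has a genuine gap: everything you extract lives in the \emph{rotations at the vertices of $A$} (you arrange that at each $a\in A$ the edges to $B_r$ form a single uniformly positioned block among the blocks to $B_{\pi'(1)},\ldots,B_{\pi'(r-1)}$), and you then assert that a ``topological gluing'' upgrades this to the global \outer property. That last implication is false as stated. Being a $\cycper{A\cdot B_{\pi(1)}\cdot\cdots\cdot B_{\pi(r)}}$-\outerd is, by Negami's characterization (Lemma~\ref{lem:outerplanar}, i.e.\ \cite[Lemma~5]{negami}), equivalent to a condition on \emph{crossings between edges}: for all $a<_A a'$ and all $u$ preceding $v$ in $B_{\pi(1)}\cdot\cdots\cdot B_{\pi(r)}$, the edges $au$ and $a'v$ must cross, with rotation $\cycper{a,a',u,v}$ at the crossing. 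Rotations at vertices cannot certify this: already in a simple drawing of $K_{2,2}$ the rotation system carries no information about whether the one possible crossing occurs, so knowing the block structure at $a$ and at $a'$ does not decide whether $ab$ crosses $a'c$ for $b\in B_i$, $c\in B_r$. This crossing data is exactly what the paper's proof manufactures: it Ramsey-colours the $4$-sets $\{a,a',b,c\}$ by the crossing type of the induced $K_{2,2}$ (colours $\eta_0,\ldots,\eta_4$), rules out all but $\eta_1$ and $\eta_2$ via a $K_{2,3}$ case analysis, and only then applies Lemma~\ref{lem:kon} to convert uniform crossing behaviour into \outerity. Your proposal contains no step that produces this information, and no gluing argument can recover it from vertex rotations alone.

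A secondary weakness, which you partly flag yourself, is the ``simultaneous extraction across all $a\in A^\ast$.'' This is not an iterated pigeonhole: de-interleaving the rotation at different vertices $a$ would in general require deleting \emph{different} elements of $B_i^\ast$ and $\oB_r$, so one needs a Ramsey argument on tuples spanning two $A$-vertices (as in the paper), not a pigeonhole over positions; likewise the final pigeonhole over the $r-1$ insertion positions must be used to pass to a subset of $A$ on which the position is constant. These points are repairable, but once you repair them in the natural way you will have reconstructed the crossing-type colouring that is the actual content of Lemma~\ref{lem:big}, at which point the paper's Lemmas~\ref{lem:kon} and~\ref{lem:kon1} (not a generic gluing) are what finish the proof.
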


\begin{proposition}\label{pro:core2}
Let $Q > q \ge 1$ be integers. Let $\oA, \oB$, and $\oC$ be pairwise disjoint permutations of vertices in a graph $G$, with $|\oA|=|\oB|=|\oC|=Q$, such that every vertex in $\oA$ is adjacent to every vertex in $\oB\cup\oC$. Let $J$ be a drawing of $G$ such that $J[\Edges{\oA}{\oB}]$ is a $\cycper{\oA\cdot\oB}$-drawing and $J[\Edges{\oA}{\oC}]$ is a $\cycper{\Rev{\oA}\cdot\oC}$-drawing. If $Q$ is sufficiently large compared to $q$, then there exist subpermutations $A\preceq \oA, B\preceq \oB$, and $C\preceq\oC$ with $|A|=|B|=|C|=q$ such that no edge in $J[\Edges{A}{B}]$ crosses an edge in $J[\Edges{A}{C}]$.
\end{proposition}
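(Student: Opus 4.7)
My plan is to combine a multi-partite Ramsey reduction with a topological construction that exploits the reverse cyclic orderings of $\oA$ on the two bounding curves.

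First, for each $4$-tuple $(a_i, a_k, b_j, c_\ell) \in \oA \times \oA \times \oB \times \oC$ with $a_i \neq a_k$, I would define a $0$/$1$-coloring recording whether the edges $a_i b_j$ and $a_k c_\ell$ cross in $J$. By an iterated application of Ramsey's theorem for partite hypergraphs (applied separately to the two orientations of the pair $(a_i, a_k)$), for $Q$ sufficiently large compared to $q$ we obtain subpermutations $A \preceq \oA$, $B \preceq \oB$, $C \preceq \oC$, each of size $q$, on which this coloring is constant. If the constant value is ``no crossing'' in all cases, we are done.

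To rule out the case in which the constant value is ``crossing,'' I would argue topologically. Choose the bounding curve $\omega_1$ of the $\cycper{\oA\cdot\oB}$-outerdrawing to be tight around the $\oA$-$\oB$-subdrawing, so $\omega_1$ bounds a disk $D_1 \subset \sphere$ containing only that subdrawing; do the same for $\omega_2$. A further Ramsey reduction classifies each $c \in \oC$ according to which side of $\omega_1$ it lies in (and symmetrically for each $b \in \oB$ and $\omega_2$), so after passing to subpermutations we may assume that $C$ lies entirely in one component of $\sphere \setminus \omega_1$ and $B$ lies entirely in one component of $\sphere \setminus \omega_2$.

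The essential topological observation is that since the cyclic orders of $\oA$ on $\omega_1$ and $\omega_2$ are reverses of one another, the ``$A$-portions'' of $\omega_1$ and $\omega_2$ (the subarcs through the $A$-vertices) lie in locally complementary angular sectors at each $a \in A$, so the $\oA$-$\oB$- and $\oA$-$\oC$-edges emanate on opposite sides of $\oA$ locally. After the Ramsey reductions, we isotope the two $A$-portions in $\sphere \setminus (B \cup C)$ so that they coincide, producing a theta-graph configuration $\omega_1 \cup \omega_2$ that partitions $\sphere$ into three regions: a disk $D_1$ containing the $A$-$B$-edges, a disk $D_2$ containing the $A$-$C$-edges, and a third region free of edges. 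Since $D_1 \cap D_2 = \emptyset$, no $A$-$B$-edge crosses any $A$-$C$-edge, contradicting the ``crossing'' hypothesis.

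The main obstacle is justifying the isotopy of the $A$-portions of $\omega_1$ and $\omega_2$: the reverse orderings provide the correct combinatorial setup at each $a \in A$, but ensuring that no edges of $J$ obstruct the global isotopy will require additional Ramsey layers (for instance, to uniformize how $\oA$-$\oB$-edges wind around $C$-vertices and vice versa). This topological deformation step is the crux, and the reverse ordering is precisely what distinguishes Proposition~\ref{pro:core2} from the ``same orientation'' setting of Proposition~\ref{pro:core} (in which edges from $\oB$ and $\oC$ must merge into a single outerdrawing rather than separate).
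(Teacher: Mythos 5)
Your opening Ramsey step is in the same spirit as the paper's, but the second half --- ruling out the monochromatic ``crossing'' outcome --- is where the proof actually lives, and your topological argument for it does not go through. The claims you need, namely that at each $a\in A$ the edges to $B$ and the edges to $C$ emanate into complementary local sectors, and that the two $A$-arcs of $\omega_1$ and $\omega_2$ can be isotoped to coincide in $\sphere\setminus(B\cup C)$ without being obstructed by edges, are not consequences of the two outer-drawing hypotheses: they are essentially equivalent to the conclusion you are trying to prove. The hypotheses constrain the rotations at the $\oB$- and $\oC$-vertices (via Negami's naturality lemma), not how the $B$-edges and $C$-edges interleave around each vertex of $\oA$ or in the region between the two bounding curves; indeed a single edge $ab$ \emph{can} cross a single edge $a'c$ under the hypotheses, which is exactly why a Ramsey reduction is needed at all and why no global ``disjoint disks'' picture exists before that reduction. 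Deferring the obstruction to unspecified ``additional Ramsey layers'' leaves the crux unproven, as you yourself acknowledge.

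The paper's route is purely local and finite, and avoids all of this. It colours each $4$-set $\{a,a',b,c\}$ not only by whether the induced $K_{2,2}$ has a crossing but also by the rotation at that crossing (five colours $\eta_0,\dots,\eta_4$), Ramsey-reduces to a monochromatic $A\cup B\cup C$, and then derives the contradiction inside a single induced $K_{2,3}$ on $\{a,a',a''\}\cup\{b,c\}$: the $\cycper{\oA\cdot\oB}$ hypothesis forces the rotation $\cycper{a,a',a''}$ at $b$, the $\cycper{\Rev{\oA}\cdot\oC}$ hypothesis forces the reversed rotation at $c$, Observation~\ref{obs:k23one} pins down the unique drawing of $K_{2,3}$ with these reversed rotations and one crossing per $K_{2,2}$, and a check of the $2!\cdot 3!=12$ labellings shows that none of them makes all three $4$-sets the same colour. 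If you want to repair your write-up, replace the global isotopy with a local analysis of this kind; note also that your $0/1$ colouring is coarser than the paper's, so you would additionally have to verify that the crossing pattern alone, without the rotation data, still excludes every labelling of that $K_{2,3}$.
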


Theorem~\ref{thm:main} simply claims that every sufficiently large drawing of $K_N^m$ contains a subdrawing that is a canonical drawing of $K_n^m$. Before we prove the theorem let us restate it fully including the definition of a canonical drawing.

\vglue 0.4 cm
\noindent{\bf Theorem~\ref{thm:main} (Equivalent formulation). }{\em Let $m\ge 2$ and $N\ge n>1$ be integers. Let $I$ be a drawing of $K_N^m$ with the partite classes labelled $\BO{1},\ldots,\BO{m}$. If $N$ is sufficiently large compared to $n$, then there exist $\bom{1}\preceq\BO{1},\ldots,\bom{m}\preceq\BO{m}$ with  $|\bom{1}|=\cdots=|\bom{m}|=n$ and a template $\Gamma=\bigl((1^+,1_{-}),(2^+,2_{-}),\ldots,(m^+,$ $m_{-})\bigr)$ with sign function $\sigma$ such that the following holds for each $i\in[m]$.}

{\sl Let $\perm{i^1,\ldots,i^{\sigma^+(i)}}$ be the permutation $i^+$. Then:}

\begin{enumerate}

\item[(C1)] {\sl $I[\edges{\bom{i}}{(\bom{i^1}\cup\cdots\cup\bom{i^{|\sigma^+(i)|}})}]$ is a $\cycper{\bom{i} \cdot \sigma(i,i^1)\, \bom{i^1}\cdot\,\,\,\cdots\,\,\,\cdot \sigma(i,i^{|\sigma^+(i)|})\, \bom{i^{|\sigma^+(i)|}}}$-\outerd.}

\end{enumerate}

{\sl Let $\perm{i_1,\ldots,i_{\sigma_-(i)}}$ be the permutation $i_{-}$. Then:}

\begin{enumerate}

\item[(C2)] {\sl $I[\edges{\bom{i}}{({\bom{i_1}}\cup\cdots\cup\bom{i_{|\sigma_{-}(i)|}})}]$ is a $\cycper{\Rev{\bom{i}} \cdot \sigma(i,i_1)\, \bom{i_1}\cdot\,\,\,\cdots\,\,\,\cdot \sigma(i,i_{|\sigma_{-}(i)|})\, \bom{i_{|\sigma_{-}(i)|}}}$-\outerd.}

\end{enumerate}

{\sl Finally,}

\begin{enumerate}

\item[(C3$'$)] {\sl for all distinct $i,j,k\in [m]$, if $j\in\sigma^+(i)$ and $k\in\sigma_-(i)$, then no edge in $I[\Edges{\bom{i}}{\bom{j}}]$ crosses an edge in $I[\Edges{\bom{i}}{\bom{k}}]$.} 

\end{enumerate}

\begin{proof}[Proof of Theorem~\ref{thm:main}, assuming Propositions~\ref{pro:core} and~\ref{pro:core2}]
Let $m\ge 2$ and $N > n\ge 1$, be integers, and let $I$ be a drawing of $K_N^m$ with partite classes $\BO{1},\ldots,\BO{m}$.

We start by noting that in view of Corollary~\ref{cor:negami}, the freedom to assume that $N$ is arbitrarily large allows us to assume  that $I$ is a pairwise \outer drawing. 

As we argue below, the proof of the theorem easily follows from the next statement.

\vglue 0.3cm
\noindent\textbf{Claim. }\textsl{Let $m\ge 2$ and $Q > q\ge 1$, be integers, and let $J$ be a pairwise \outer drawing of $K_Q^m$ with partite classes $\BO{1},\ldots,\BO{m}$, and pairwise sign function $\sigma$. Then:}
\begin{enumerate}
\item[(I)] \textsl{Let $i$ be a fixed integer in $[m]$. If $Q$ is sufficiently large compared to $q$, then there exist $\bom{1}\preceq\BO{1},\ldots,\bom{m}\preceq\BO{m}$ with  $|\bom{1}|=\cdots=|\bom{m}|=q$, and a permutation $i^+:=\perm{i^1,\ldots,i^{\sigma^+(i)}}$ of $\sigma^+(i)$ such that}
\[
J[\edges{\bom{i}}{(\bom{i^1}\cup\cdots\cup\bom{i^{|\sigma^+(i)|}})}] 
\text{\sl \,\,is a\,\,}
\cycper{\bom{i} \cdot \sigma(i,i^1)\, \bom{i^1}\cdot\,\,\,\cdots\,\,\,\cdot \sigma(i,i^{|\sigma^+(i)|})\, \bom{i^{|\sigma^+(i)|}}}{\text{\sl -drawing}}.
\]

\item[(II)] \textsl{Let $i$ be a fixed integer in $[m]$. If $Q$ is sufficiently large compared to $q$, then there exist $\bom{1}\preceq\BO{1},\ldots,\bom{m}\preceq\BO{m}$ with  $|\bom{1}|=\cdots=|\bom{m}|=q$, and a permutation $i_-:=\perm{i_1,\ldots,i_{\sigma_-(i)}}$ of $\sigma_-(i)$ such that}
\[
J[\edges{\bom{i}}{(\bom{i_1}\cup\cdots\cup\bom{i_{|\sigma_-(i)|}})}] 
\text{\sl \,\,is a\,\,}
\cycper{\Rev{\bom{i}} \cdot \sigma(i,i_1)\, \bom{i_1}\cdot\,\,\,\cdots\,\,\,\cdot \sigma(i,i_{|\sigma_-(i)|})\, \bom{i_{|\sigma_-(i)|}}}{\text{\sl -drawing}}.
\]
\item[(III)] \textsl{Let $i,j,k$ be distinct fixed integers in $[m]$ such that $j\in\sigma^+(i)$ and $k\in\sigma_-(i)$. If $Q$ is sufficiently large compared to $q$, then there exist $\bom{1}\preceq\BO{1},\ldots,\bom{m}\preceq\BO{m}$ with  $|\bom{1}|=\cdots=|\bom{m}|=q$ such that no edge in $J[\Edges{\bom{i}}{\bom{j}}]$ crosses an edge in $J[\Edges{\bom{i}}{\bom{k}}]$.} 
\end{enumerate}

\vglue 0.3cm

To see that this Claim indeed implies the theorem we start by noting that if $N$ is sufficiently large then we can apply (I) to $i=1,\ldots,m$ successively, then (II) to $i=1,\ldots,m$ successively, and then (III) also successively to every triple $i,j,k$ of distinct integers in $[m]$ such that $j\in \sigma^+(i)$ and $k\in\sigma_-(i)$. (After each application (I), (II), or (III) we relabel back the resulting subpermutations $\bom{1},\ldots,\bom{m}$ with $\BO{1},\ldots,\BO{m}$, so that we can apply (I), (II), or (III) seamlessly once again).

As a final result we obtain subpermutations $\bom{1},\ldots,\bom{m}$ of the original partite classes $\BO{1},\ldots,\BO{m}$, with $|\bom{1}|=\cdots=|\bom{m}|$ and $1^+, 1_-, \ldots, m^+, m_-$ (that is, a template $\Gamma=\bigl((1^+,1_-),\ldots,(m^+,m_-)\bigr)$) such that (C1), (C2), and (C3$'$) hold, as claimed in Theorem~\ref{thm:main}.

Thus we complete the proof by showing the Claim.

\begin{proof}[Proof of the Claim]
We start with the proof of (I) by noting that for simplicity it suffices to prove it for the particular case $i=1$, as the proof for an arbitrary value of $i$ is identical. 

Thus it suffices to show that if $Q$ is sufficiently large compared to $q$, then there exist  $\bom{1}\preceq\BO{1},$ $\ldots,\bom{m}\preceq\BO{m}$ with  $|\bom{1}|=\cdots=|\bom{m}|=q$, and a permutation $1^+=\perm{1^1,\ldots,1^{\sigma^+(1)}}$ of $\sigma^+(1)$ such that
\begin{equation}\label{eq:ClaI}
J[\edges{\bom{1}}{(\bom{1^1}\cup\cdots\cup\bom{1^{|\sigma^+(1)|}})}] 
\text{\sl \,\,is a\,\,}
\cycper{\bom{1} \cdot \sigma(1,1^1)\, \bom{1^1}\cdot\,\,\,\cdots\,\,\,\cdot \sigma(1,1^{|\sigma^+(1)|})\, \bom{1^{|\sigma^+(1)|}}}{\text{\sl -drawing}}.
\end{equation}

But this is an easy consequence of Proposition~\ref{pro:core}: simply relabel $\BO{1}$ with $\oA$, and relabel the $|\sigma^+(1)|$ permutations in $\{\sigma(1,i)\,\BO{i}\,\bigl| \, i\in \sigma^+(1)\}$ with $\oB_1,\ldots,\oB_{|\sigma^+(1)|}$ arbitrarily. With these relabellings, the assumption that $J$ is pairwise \outer means that $J[\Edges{\oA}{\oB_i}]$ is a $\cycper{\oA\,\cdot\,\oB_i}$-drawing for all $1 \le i \le |\sigma^+(1)|$. 

Proposition~\ref{pro:core} then implies that if $Q$ is sufficiently large compared to $q$, then there exist subpermutations $A\preceq \oA$ and $B_i\preceq \oB_i$ for $i=1,\ldots,|\sigma^+(1)|$ with $|A|=|B_1|=\cdots=|B_{|\sigma^+(1)|}|$, and a permutation $\pi=\perm{\pi(1),\ldots,\pi(|\sigma^+(1)|)}$ of $\{1,\ldots,|\sigma^+(1)|\}$ such that 
\begin{equation}\label{eq:ClaI2}
J[\edges{A}{(B_1\cup \,\,\cdots\,\,\cup B_{|\sigma^+(1)|})}] \text{\sl \,\,is a\,\,}
\cycper{A\cdot B_{\pi(1)}\cdot \,\,\cdots\,\,\cdot B_{\pi(|\sigma^+(1))|}}\textsl{-\outerd.}
\end{equation}

Now if we relabel $A$ with $\bom{1}$ and $B_{\pi(j)}$ with $\sigma(1,1^j) \, \bom{1^j}$ for $j=1,\ldots,|\sigma^+(1)|$, \eqref{eq:ClaI2} becomes exactly~\eqref{eq:ClaI}, as required. This completes the proof of (I).

The proof of (I) is easily adapted to prove (II).

Turning our attention to (III), let $i,j,k$ be any fixed triple of distinct integers in $[m]$ such that $\sigma(j,i)=1$ and $\sigma(k,i)=-1$. For simplicity, by relabelling the partite classes if necessary we may assume that $i=1, j=2$, and $k=3$, so that $\sigma(2,1)=1$ and $\sigma(3,1)=-1$. 

Our aim is to show that if $Q$ is sufficiently large compared to $q$, then there exist $\bom{1}\preceq\BO{1}, \bom{2}\preceq\BO{2}$, and $\bom{3}\preceq\BO{3}$ with $|\bom{1}|=\cdots=|\bom{m}|=q$ such that no edge in $J[\Edges{\bom{1}}{\bom{2}}]$ crosses an edge in $J[\Edges{\bom{1}}{\bom{3}}]$. {Note that the partite classes $\BO{4},\ldots,\BO{m}$ are irrelevant for this purpose: $\bom{4},\ldots,\bom{m}$ can be chosen to be any subpermutations of size $n$ of $\BO{4},\ldots,\BO{m}$, respectively.}

The assumption that $J$ is a pairwise \outer drawing implies (since $\sigma(2,1)=1$) that $J[\edges{\BO{1}}{\BO{2}}]$ is either a $\cycper{\BO{1}\cdot\BO{2}}$-drawing or a $\cycper{\BO{1}\cdot\Rev{\BO{2}}}$-drawing. Similarly, since $\sigma(3,1)=-1$ we have that $J[\edges{\BO{1}}{\BO{3}}]$ is either a $\cycper{\Rev{\BO{1}}\cdot\BO{3}}$-drawing or a $\cycper{\Rev{\BO{1}}\cdot\Rev{\BO{3}}}$-drawing.

By reversing one or both of the permutations $\BO{2}$ and $\BO{3}$ if necessary, for simplicity we may assume that $J[\edges{\BO{1}}{\BO{2}}]$ is a $\cycper{\BO{1}\cdot\BO{2}}$-drawing and $J[\edges{\BO{1}}{\BO{3}}]$ is a $\cycper{\Rev{\BO{1}}\cdot\BO{3}}$-drawing.

We now relabel $\BO{1}$ with $\oA$, $\BO{2}$ with $\oB$, and $\BO{3}$ with $\oC$. Thus $J[\Edges{\oA}{\oB}]$ is a $\cycper{\oA\cdot\oB}$-drawing and $J[\Edges{\oA}{\oC}]$ is a $\cycper{\Rev{\oA}\cdot\oC}$-drawing. By Proposition~\ref{pro:core2}, if $Q$ is sufficiently large compared to $q$ then there exist subpermutations $A\preceq \oA, B\preceq \oB$, and $C\preceq\oC$ with $|A|=|B|=|C|={q}$ such that no edge in $J[\Edges{A}{B}]$ crosses an edge in $J[\Edges{A}{C}]$. 

Letting $\bom{1}=A, \bom{2}=B$, and $\bom{3}=C$, this means that $\bom{1},\bom{2}$, and $\bom{3}$ are subpermutations of $\BO{1},\BO{2}$, and $\BO{3}$, respectively, with $|\bom{1}|=|\bom{2}|=|\bom{3}|=q$ such that no edge in $J[\edges{\bom{1}}{\bom{2}}]$ crosses an edge in $J[\edges{\bom{1}}{\bom{3}}]$, as required.
\end{proof}

With the proof of the Claim the proof of Theorem~\ref{thm:main} is now complete.
\end{proof}

\section{Proof of Proposition~\ref{pro:core}}\label{sec:core}

In a nutshell, Proposition~\ref{pro:core} claims that if we have permutations $\oA,\oB_1,\ldots,\oB_r$ of vertices of a graph, and a drawing whose restriction to $\Edges{\oA}{\oB_i}$ is a $\cycper{\oA\,\cdot\oB_i}$-drawing for each $i$, then there are subpermutations $A\preceq \oA,B_1\preceq\oB_1,\ldots,B_r\preceq\oB_r$ and a permutation $\pi$ or $[r]$ such that the restriction of the drawing to $\Edges{A}{B_1\cup\cdots\cup B_r}$ is a $\cycper{A\cdot B_{\pi(1)}\cdot \cdots \cdot B_{\pi(r)}}$-drawing. 

As a first step towards the proof of the proposition, let us start with two lemmas (namely Lemmas~\ref{lem:kon1} and~\ref{lem:kon}) that give conditions that guarantee outcomes of this character. 

\subsection{\outer drawings that combine into a single \outer drawing}

We start with the following lemma, which guarantees an outcome that closely resembles the conclusion of Proposition~\ref{pro:core}.

\begin{lemma}\label{lem:kon1}
{Let $A,C_1,\ldots,C_r$ be pairwise disjoint permutations of vertices in a graph $G$, such that every vertex in $A$ is adjacent to every vertex in $C_1\cup\cdots\cup C_r$. Let $I$ be a drawing of $G$. If $I[\edges{A}{(C_i \cup C_j)}]$ is a $\cycper{A\cdot C_i\cdot C_j}$-{\outerd} for all $1\le i < j \le r$, then $I[\edges{A}{(C_1\cup \,\,\cdots\,\,\cup C_r)}]$ is a $\cycper{A\cdot C_1\cdot \,\,\cdots\,\,\cdot C_r}$-\outerd.}
\end{lemma}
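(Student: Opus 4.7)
I would prove Lemma~\ref{lem:kon1} by induction on $r$. The base case $r=2$ is immediate: it is exactly the hypothesis applied with $i=1, j=2$. For the inductive step, assume the statement holds for $r-1\ge 2$. Applying the induction hypothesis to $A, C_1, \ldots, C_{r-1}$ (whose pairwise outer conditions are a subset of the original hypothesis) yields that $D' := I[\edges{A}{(C_1\cup\cdots\cup C_{r-1})}]$ is a $\cycper{A\cdot C_1\cdot \cdots\cdot C_{r-1}}$-\outerd with some bounding curve $\omega'$.

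The objective is to produce a bounding curve $\omega$ for $D := I[\edges{A}{(C_1\cup\cdots\cup C_r)}]$ realizing the cyclic order $\cycper{A\cdot C_1\cdot \cdots\cdot C_r}$. The natural strategy is to splice $\omega'$ with a segment through $C_r$ supplied by the pairwise outer drawing $I[\edges{A}{(C_{r-1}\cup C_r)}]$ (of type $\cycper{A\cdot C_{r-1}\cdot C_r}$), whose bounding curve I denote $\omega_{r-1,r}$. Specifically: trace $\omega'$ starting at $a_1$, passing through $A$ and then $C_1, \ldots, C_{r-1}$ up to $c_{r-1}^{n_{r-1}}$; then pick up the portion of $\omega_{r-1,r}$ that threads $c_r^1, c_r^2, \ldots, c_r^{n_r}$ in order immediately after $c_{r-1}^{n_{r-1}}$; and finally close up back to $a_1$ through the empty region. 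The internal order of $C_r$ along this detour is precisely what $\omega_{r-1,r}$ supplies.

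For this splicing to produce a valid bounding curve for $D$, one must verify that: (a) every vertex of $C_r$ lies in the outer face of $D'$ (the face of $D'$ bounded by $\omega'$ on the empty side); (b) every edge in $\edges{A}{C_r}$ is contained, except for its $A$-endpoint on $\omega'$, in that outer face; and (c) the portion of $\omega_{r-1,r}$ through $C_r$ can be realized inside that outer face without crossing any edge of $D$. Items (a) and (b) are established by invoking, for each $j\in[r-1]$, the pairwise outer condition that $I[\edges{A}{(C_j\cup C_r)}]$ is a $\cycper{A\cdot C_j\cdot C_r}$-\outerd: each such structure constrains $C_r$ and its incident edges to one specific side of the $A\cdot C_j$ portion of the picture, and assembling these constraints across all $j\in[r-1]$ places $C_r$ together with $\edges{A}{C_r}$ uniformly on the side of $\omega'$ that is the outer face of $D'$. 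Item (c) then follows by extracting from $\omega_{r-1,r}$ the sub-arc that already passes through $c_r^1,\ldots,c_r^{n_r}$ in this order.

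The main obstacle will be the rigorous proof of (a) and (b): the various pairwise bounding curves $\omega_{jr}$ are a priori distinct Jordan curves in the sphere, and one must argue that they agree about which side of $\omega'$ hosts $C_r$ together with its incident edges. This is a Jordan-curve-theorem argument leveraging the fact that $A$ appears with the same internal orientation in every pairwise outer drawing, which anchors the ``outside'' of $\omega'$ consistently from the viewpoint of every $\omega_{jr}$. Once (a) and (b) are secured, the splicing produces the desired bounding curve $\omega$ and completes the induction.
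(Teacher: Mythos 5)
Your approach is genuinely different from the paper's, and unfortunately the gaps you flag are fatal ones, not routine technicalities. The paper proves Lemma~\ref{lem:kon1} in a few lines by invoking Negami's characterization (Lemma~\ref{lem:outerplanar}): a drawing $I[\edges{A}{B}]$ is a $\cycper{A\cdot B}$-\outerd if and only if the pair $[A,B]$ is \emph{natural}, meaning that for $a<_A a'$ and $b<_B b'$ the edges $ab$ and $a'b'$ cross with rotation $\cycper{a,a',b,b'}$. That equivalence converts ``is a $\cycper{\cdot}$-\outerd'' into a purely pairwise crossing condition. To show $[A, C_1\cdot\cdots\cdot C_r]$ is natural one then only needs to check, for each pair $u<v$ in $C_1\cdot\cdots\cdot C_r$, that the two relevant edges cross correctly; if $u,v$ lie in the same $C_i$ this follows from naturality of $[A,C_i]$, and if $u\in C_i$, $v\in C_j$ with $i<j$ it follows from naturality of $[A, C_i\cdot C_j]$. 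No induction, no curve surgery, no Jordan-curve bookkeeping.

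Your induction-plus-splicing route attempts to construct a bounding curve directly, and this is where the serious problems sit. First, items (a) and (b) are stated as things that ``will'' be established, but the Jordan-curve argument you gesture at is not carried out, and it is exactly the hard part: each pairwise hypothesis $I[\edges{A}{(C_j\cup C_r)}]$ comes with its own bounding curve $\omega_{jr}$, a priori unrelated to $\omega'$, and ``anchoring the outside consistently'' across all $j$ is a nontrivial claim that needs a proof, not an appeal. Second, item (c) does not follow as stated: the sub-arc of $\omega_{r-1,r}$ through $C_r$ is only guaranteed to avoid edges in $\edges{A}{(C_{r-1}\cup C_r)}$; nothing prevents it from crossing edges in $\edges{A}{(C_1\cup\cdots\cup C_{r-2})}$, which are also edges of $D$ but invisible to $\omega_{r-1,r}$. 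One would have to reroute an entirely new arc through $C_r$ inside the outer face of $D'$ avoiding all of $\edges{A}{C_r}$, and justify that such an arc exists with the correct order; you do not do this. In short, the topological construction you outline is substantially harder than what the lemma actually requires, and the decisive tool that makes the proof short --- the natural-pair equivalence, which the paper states immediately before this lemma --- goes unused.
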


As in Negami's proof of Theorem~\ref{thm:negami2} above (see~\cite{negami}), in order to prove Lemma~\ref{lem:kon1} we make essential use of a result that gives conditions that guarantee that a drawing is \outer. This is Lemma~\ref{lem:outerplanar} below. This statement, established in~\cite{negami}, involves the notion of what we call a ``natural pair'' of permutations of vertices in a drawing. 


The concept of a natural pair relies on the notion of the rotation at a crossing. Since all drawings under consideration are simple, no three edges cross at a common point. Thus, every crossing $x$ involves exactly two edges $e=u_1u_2$ and $f=v_1v_2$. The {\em rotation} at $x$  is the cyclic permutation that records the clockwise cyclic order in which the edges towards $u_1,u_2,v_1$, and $v_2$ leave $x$. 
For an example we refer the reader to Figure~\ref{fig:255} and its caption. 

\begin{figure}[ht!]
\def\ta#1{{\Scale[1.8]{#1}}}
\def\tb#1{{\Scale[1.4]{#1}}}
\centering
\scalebox{0.4}{\input{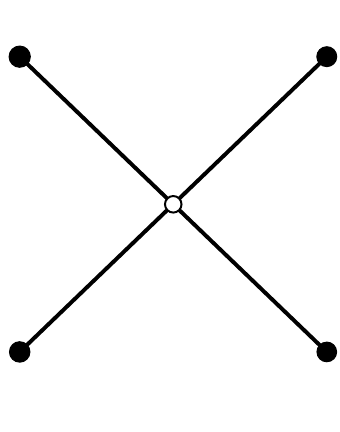_t}}
\caption{The rotation at the crossing $x$ is $\cycper{a,a',b,b'}$.}
\label{fig:255}
\end{figure}

Now let $A,B$ be disjoint permutations of vertices in a graph $G$, such that every vertex of $A$ is adjacent to every vertex in $B$. Let $I$ be a drawing of $G$. We say that the pair $[A,B]$ is {\em natural in $I$} if the following holds: if $a,a'$ (respectively, $b,b'$) are vertices in $A$ (respectively, $B$) such that $a <_A a'$ (respectively, $b <_B b'$), then $ab$ and $a' b'$ cross each other in $I$, and the rotation at this crossing is $\cycper{a,a',b,b'}$ (as in Figure~\ref{fig:255}).

We note that this definition is symmetric in $A$ and $B$: the pair $[A,B]$ is natural in $I$ if and only if the pair $[B,A]$ is natural in $I$.

For an illustration of this concept we refer the reader back to Figure~\ref{fig:445}. As in that figure, let $\bom{1}=\perm{1(a_1),1(a_2),1(a_3),1(a_4)}$ and $\bom{2}=\perm{2(b_1),2(b_2),2(b_3),2(b_4))}$. Then the pair $[\bom{1},\bom{2}]$ is natural in the drawing in Figure~\ref{fig:445}(a); the pair $[\Rev{\bom{1}},\bom{2}]$ is natural in the drawing in Figure~\ref{fig:445}(b); the pair $[\bom{1},\Rev{\bom{2}}]$ is natural in the drawing in Figure~\ref{fig:445}(c); and the pair $[\Rev{\bom{1}},\Rev{\bom{2}}]$ is natural in the drawing in Figure~\ref{fig:445}(d).

The four drawings in Figure~\ref{fig:445} hint at a close relationship between naturality and \outerity. This is captured in the following statement, established by Negami in~\cite{negami}:

\begin{lemma}[{\cite[Lemma 5]{negami}}]\label{lem:outerplanar}
Let $A,B$ be disjoint permutations of vertices in a graph $G$, such that every vertex in $A$ is adjacent to every vertex in $B$. Let $I$ be a drawing of $G$. Then the pair $[A,B]$ is natural in $I$ if and only if $I[\edges{A}{B}]$ is a $\cycper{A\cdot B}$-\outerd.
\end{lemma}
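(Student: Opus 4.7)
The proof splits into two implications.

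For the backward direction, suppose $I[\edges{A}{B}]$ is a $\cycper{A\cdot B}$-{\outerd}, with bounding curve $\omega$, and let $\Delta\subset\sphere$ be the closed disc bounded by $\omega$ that contains every edge of the drawing. Given $a<_A a'$ and $b<_B b'$, the cyclic order around $\omega$ lists $a,a',b,b'$ in this order, so the endpoints of the chords $ab$ and $a'b'$ alternate on $\omega$. An elementary Jordan curve argument inside $\Delta$ forces the two chords to meet, and simplicity forces a single crossing. The rotation at this crossing is read off by reproducing locally the picture of Figure~\ref{fig:255}: the paper's convention that edges lie to the right of $\omega$ during its $\cycper{A\cdot B}$-traversal fixes the orientation, so that the clockwise cyclic order of the four arms at the crossing is $\cycper{a,a',b,b'}$, as required.

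For the forward direction, the plan is to show that if $[A,B]$ is natural in $I$, then $I[\edges{A}{B}]$ is weakly isomorphic to the standard $\cycper{A\cdot B}$-{\outerd} $D$ of $K_{|A|,|B|}$; weak isomorphism will then transport the bounding curve of $D$ to one of $I[\edges{A}{B}]$. A simple drawing on $\sphere$ is determined up to weak isomorphism by its combinatorial data (the rotation at each vertex, the set of crossing pairs, and the rotation at each crossing), so it suffices to match this data between $I[\edges{A}{B}]$ and $D$. Naturality already provides the crossings of edges $a_i b_j$ and $a_{i'} b_{j'}$ with $i<i'$ and $j<j'$, together with their prescribed rotations, which all match $D$.

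What remains is (a) to determine the rotation at each vertex, and (b) to rule out crossings of anti-natural pairs $a_i b_{j'}, a_{i'} b_j$ with $i<i'$ and $j<j'$. For (a), I would fix $a_i$ and pick any $a_{i'}$ with $i'\ne i$ (using $|A|\ge 2$); as $j,j'$ vary, the natural crossings of $a_i b_j$ with $a_{i'} b_{j'}$ constrain the cyclic order of the arms at $a_i$, and a short combinatorial bookkeeping using the prescribed crossing rotations forces the clockwise rotation at $a_i$ to be $\cycper{b_1,\ldots,b_{|B|}}$ or its reverse, matching $D$; a symmetric argument handles each $b_j$. For (b), I would proceed by induction on $|A|+|B|$: in the base case $|A|=|B|=2$, the combined combinatorial data (trivial vertex rotations plus the prescribed rotation $\cycper{a_1,a_2,b_1,b_2}$ at the natural crossing) is shown by a finite case analysis to be incompatible on $\sphere$ with a second crossing between $a_1 b_2$ and $a_2 b_1$; in the inductive step, delete a vertex, invoke the inductive hypothesis on the smaller drawing, and reinsert the deleted vertex using the natural crossings in which it participates to place it consistently on the bounding curve.

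The main technical obstacle is step (b), the exclusion of anti-natural crossings; the critical point is the $K_{2,2}$ base case, where one verifies topologically that the prescribed natural rotation over-constrains the picture and rules out the second crossing. Once (a) and (b) are settled, the combinatorial data of $I[\edges{A}{B}]$ and $D$ agree, yielding the required weak isomorphism, and hence a $\cycper{A\cdot B}$-bounding curve for $I[\edges{A}{B}]$ by transport.
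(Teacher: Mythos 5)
First, note that the paper does not actually prove this lemma: it is quoted as \cite[Lemma 5]{negami} and used as a black box, so there is no in-paper proof to measure yours against. Your backward direction is essentially the standard argument and is sound: the endpoints of $ab$ and $a'b'$ alternate on the bounding curve, so a Jordan-curve parity argument inside the disc forces an odd number of crossings, simplicity caps this at one, and the four arms at the crossing cut the disc into regions each meeting one boundary arc, which pins down the rotation $\cycper{a,a',b,b'}$.

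The forward direction, however, has a genuine gap at its final step. Weak isomorphism, as defined in this paper, preserves only which pairs of edges cross; it does not transport bounding curves. (A crossing-free plane drawing of $K_{1,3}$ with rotation $\cycper{b_1,b_3,b_2}$ at the centre is weakly isomorphic to the $\cycper{a,b_1,b_2,b_3}$-\outerd of $K_{1,3}$, yet is not itself a $\cycper{a,b_1,b_2,b_3}$-\outerd.) You implicitly acknowledge this by also matching vertex rotations and crossing rotations, but the claim that these data determine a simple drawing up to homeomorphism of $\sphere$ is neither proved nor true in general: one needs, at minimum, also the order of the crossings along each edge, which you never control. So the passage from ``the combinatorial data of $I[\edges{A}{B}]$ agrees with that of $D$'' to ``$I[\edges{A}{B}]$ admits a bounding curve with order $\cycper{A\cdot B}$'' is essentially the content of the lemma and is left unproved; what is needed is a direct topological construction of $\omega$, for instance by induction, inserting one vertex at a time into the correct face of the smaller arrangement. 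Two smaller points: your step (b) needs no induction, since naturality together with the fact that a simple drawing of $K_{2,2}$ has at most one crossing (a fact the paper itself invokes) already forbids the anti-natural crossing within every quadruple $\{a_i,a_{i'},b_j,b_{j'}\}$; and in step (a) you must also exclude the possibility that different vertices of $A$ receive opposite orientations of the rotation $\cycper{b_1,\ldots,b_{|B|}}$, which your bookkeeping does not address.
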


With this tool in hand we are ready to prove Lemma~\ref{lem:kon1}.

\begin{proof}[Proof of Lemma~\ref{lem:kon1}]
In view of Lemma~\ref{lem:outerplanar}, the hypotheses of Lemma~\ref{lem:kon1} imply that ($*$) {\sl the pair $[A,C_i \cdot C_j]$ is natural in $I$, for all $1\le i < j \le r$}. Also in view of Lemma~\ref{lem:outerplanar} it suffices to show that $[A,C_1\cdot \,\,\cdots\,\,\cdot C_r]$ is natural in $I$.

We start by noting that since the pair $[A,C_i \cdot C_j]$ is natural in $I$ for all $1\le i < j \le r$, in particular it follows that ($**$) {\sl the pair $[A,C_i]$ is natural in $I$ for each $i\in[r]$}.

Let $a,a'$ be any vertices in $A$ such that $a<_A a'$, and let $u,v$ be any vertices in $C_1\cup \,\,\cdots \,\,\cup C_r$ such that $u<_{C_1\cdot \,\,\cdots \,\,\cdot C_r} v$. Our goal is to show that then ($\dag$) {\sl the edge $a u$ crosses the edge $a' v$ in $I$, and the rotation at this crossing in $I$ is $\cycper{a,a',u,v}$}.

Since $u<_{C_1\cdot \,\,\cdots \,\,\cdot C_r}v$ in $C_1\cdot \,\,\cdots \,\,\cdot C_r$, there are two cases to analyze: either (i) $u$ and $v$ are in the same $C_i$ for some $i\in[r]$; or (ii) $u\in C_i$ and $v\in C_j$, for some $1\le i < j \le r$. We conclude the proof by noting that if (i) holds then ($\dag$) follows from ($**$), and if (ii) holds then ($\dag$) follows from ($*$). 
\end{proof}


We close this subsection with a result whose proof also makes essential use of natural pairs. This crucial lemma gives conditions under which the union of two \outer drawings is a \outer drawing.

\begin{lemma}\label{lem:kon}
Let $A,B,C$ be pairwise disjoint permutations of vertices in a graph $G$, such that every vertex in $A$ is adjacent to every vertex in $B\cup C$. Let $I$ be a drawing of $G$. Suppose that $I[\edges{A}{B}]$ is a $\cycper{A\cdot B}$-{\outerd} and $I[\edges{A}{C}]$ is a $\cycper{A\cdot C}$-\outerd. Further suppose that if $a,a'$ are any vertices in $A$ such that $a <_A a'$, $b$ is any vertex in $B$, and $c$ is any vertex in $C$, then the edge $a b$ crosses the edge $a' c$ in $I$, and the rotation at this crossing is $\cycper{a,a',b,c}$. Then $I[\edges{A}{B\cup C}]$ is a $\cycper{A \cdot B \cdot C}$-\outerd.
\end{lemma}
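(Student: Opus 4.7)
The plan is to prove Lemma~\ref{lem:kon} by a direct reduction to Lemma~\ref{lem:outerplanar}: rather than constructing a bounding curve for $I[\edges{A}{B\cup C}]$ by hand, I would just verify that the pair $[A,\,B\cdot C]$ is natural in $I$, and then Lemma~\ref{lem:outerplanar} immediately gives the desired conclusion that $I[\edges{A}{B\cup C}]$ is a $\cycper{A\cdot B\cdot C}$-\outerd.

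To set things up, I would first translate the two \outerity hypotheses using Lemma~\ref{lem:outerplanar}: the assumption that $I[\edges{A}{B}]$ is a $\cycper{A\cdot B}$-\outerd means exactly that the pair $[A,B]$ is natural in $I$, and similarly $[A,C]$ is natural in $I$. Thus for any $a<_A a'$ and any $b<_B b'$ (resp.\ $c<_C c'$), the edges $ab, a'b'$ (resp.\ $ac, a'c'$) cross with rotation $\cycper{a,a',b,b'}$ (resp.\ $\cycper{a,a',c,c'}$).

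Next, to verify that $[A,\,B\cdot C]$ is natural in $I$, I would fix arbitrary vertices $a<_A a'$ in $A$ and $u<_{B\cdot C} v$ in $B\cdot C$, and show that $au$ crosses $a'v$ in $I$ with rotation $\cycper{a,a',u,v}$. Since elements of $B$ precede elements of $C$ in $B\cdot C$, the relation $u<_{B\cdot C} v$ falls into exactly three cases: (i) $u,v\in B$ with $u<_B v$, (ii) $u,v\in C$ with $u<_C v$, or (iii) $u\in B$ and $v\in C$. Cases (i) and (ii) follow from the naturality of $[A,B]$ and $[A,C]$, respectively, and case (iii) is exactly the extra crossing/rotation hypothesis in the statement of the lemma. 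This exhausts all cases, so $[A,\,B\cdot C]$ is natural in $I$, and Lemma~\ref{lem:outerplanar} finishes the proof.

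There is essentially no obstacle here: the lemma is really a repackaging of Lemma~\ref{lem:outerplanar}, together with the observation that the three hypotheses cover precisely the three ways two vertices of $B\cdot C$ can be comparable. The only point requiring minor care is to make sure the ordering convention in $B\cdot C$ is respected, so that the case $u\in C,\,v\in B$ does not arise — which is automatic from the definition of concatenation.
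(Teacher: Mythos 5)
Your proposal is correct and follows essentially the same route as the paper: translate both \outerity hypotheses into naturality of $[A,B]$ and $[A,C]$ via Lemma~\ref{lem:outerplanar}, verify naturality of $[A,\,B\cdot C]$ by the same three-case split on $u<_{B\cdot C}v$, and conclude with a second application of Lemma~\ref{lem:outerplanar}. No gaps.
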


\begin{proof}
In view of Lemma~\ref{lem:outerplanar}, the hypotheses imply that both $[A,B]$ and $[A,C]$ are natural in $I$. Also in view of Lemma~\ref{lem:outerplanar} it suffices to show that the pair $[A,B\cdot C]$ is natural in $I$.

Let $a,a'$ be any vertices in $A$ such that $a<_A a'$, and let $u,v$ be any vertices in $B\cdot C$ such that $u<_{B\cdot C}v$. Our goal is to show that then ($\dag$) {\sl the edge $a u$ crosses the edge $a' v$ in $I$, and the rotation at this crossing in $I$ is $\cycper{a,a',u,v}$}.

Since $u <_{B\cdot C}v$, there are three cases to analyze: either (i) both $u$ and $v$ are in $B$; (ii) both $u$ and $v$ are in $C$; or (iii) $u$ is in $B$ and $v$ is in $C$.

Suppose first that (i) holds. In this case the assumption that the pair $[A,B]$ is natural in $I$ implies ($\dag$). Similarly, if (ii) holds, then the assumption that the pair $[A,C]$ is natural in $I$ implies ($\dag$). Finally, if (iii) holds then by performing the relabellings $u\mapsto b$ and $v\mapsto c$ we see that ($\dag$) is an explicit assumption in the statement of the lemma.
\end{proof}


\subsection{Proof of Proposition~\ref{pro:core}}

As we are about to see, Proposition~\ref{pro:core} is an easy consequence of the following.

\begin{lemma}\label{lem:big}
Let $Q > q \ge 1$ be integers. Let $\oA,\oB_1,\ldots,\oB_r$ be pairwise disjoint permutations of vertices in a graph $G$, with $|\oA|=|\oB_1|=\cdots=|\oB_r|=Q$, such that every vertex in $\oA$ is adjacent to every vertex in $\oB_1\cup\cdots\cup \oB_r$. Suppose that $J$ is a drawing of $G$ such that $J[\edges{\oA}{\oB_i}]$ is a $\cycper{\oA \cdot \oB_i}$-{\outerd} for all $1\le i \le r$. If $Q$ is sufficiently large compared to $q$, then there exist subpermutations $A\preceq \oA$ and $B_i\preceq \oB_i$ for $i=1,\ldots,r$, with $|A|=|B_1|=\cdots=|B_{r}|=q$ such that the following hold for any distinct integers $i,j,k\in[r]$. 
\begin{enumerate}
\item[(I)] $J[\Edges{A}{B_i\cup B_j}]$ is either a  $\cycper{A\cdot B_i\cdot B_j}$-drawing or a $\cycper{A\cdot B_j\cdot B_i}$-\outerd.
\item[(II)] (Transitivity). If $J[\Edges{A}{B_i\cup B_j}]$ is a $\cycper{A\cdot B_i\cdot B_j}$-drawing and $J[\edges{A}{B_j\cup B_k}]$ is a $\cycper{A\cdot B_j\cdot B_k}$-drawing, then $J[\edges{A}{B_i\cup B_k}]$ is a $\cycper{A\cdot B_i\cdot B_k}$-drawing.
\end{enumerate}
\end{lemma}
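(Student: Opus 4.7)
The plan is to prove Lemma~\ref{lem:big} in two stages: first a Ramsey-type reduction to establish (I), then a further Ramsey-style refinement combined with a local geometric argument to establish (II). Throughout, the main translation tool will be Lemma~\ref{lem:outerplanar}, allowing us to move freely between \outer conditions and naturality of pairs of permutations.

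For (I), I fix any pair of indices $i\neq j$ in $[r]$ and consider the 4-tuples $(a,a',b,c)$ with $a<_{\oA} a'$, $b\in\oB_i$, and $c\in\oB_j$. Each such 4-tuple is classified by the crossing configuration of the induced 4-cycle $a\,b\,a'\,c$: which of the non-adjacent edge pairs $\{ab,a'c\}$ and $\{ac,a'b\}$ cross, and the rotation at any crossing. Since the drawing is simple, only finitely many types arise. I would then iterate the standard Ramsey theorem (successively on $\oA$, $\oB_i$, $\oB_j$, treating the 4-tuple coloring as a product/multipartite hypergraph coloring) to obtain subpermutations $A\preceq\oA$, $B_i\preceq\oB_i$, $B_j\preceq\oB_j$ of large size on which every such 4-tuple has the same type. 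The naturality of $[\oA,\oB_i]$ and $[\oA,\oB_j]$, which is inherited by subpermutations, then forces the uniform type either to have $ab$ cross $a'c$ with rotation $\cycper{a,a',b,c}$ for every 4-tuple (making $[A, B_i\cdot B_j]$ natural in $J$) or to have $ac$ cross $a'b$ with rotation $\cycper{a,a',c,b}$ for every 4-tuple (making $[A, B_j\cdot B_i]$ natural). Invoking Lemma~\ref{lem:outerplanar} a second time then yields the \outer structure of (I) for the pair $(i,j)$. I iterate this reduction nestedly over the $\binom{r}{2}$ pairs; taking $Q$ as a sufficiently large tower-type function of $q$ and $r$ ensures the final subpermutations have size at least $q$ and (I) holds simultaneously for every pair.

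For (II), once (I) is in force I would handle transitivity by another Ramsey reduction, now on 5-tuples $(a,a',b,c,d)$ indexed by ordered triples $(i,j,k)$ of distinct indices, with $a<_A a'$, $b\in B_i$, $c\in B_j$, $d\in B_k$. Classifying the induced sub-drawing on $\{a,a',b,c,d\}$ by its crossings and rotations among the edges $ab,ac,ad,a'b,a'c,a'd$ yields finitely many types, and a monochromatic reduction forces a uniform type on all such 5-tuples. Combining this uniform type with the three pairwise naturality conditions supplied by (I) on $\{i,j\}$, $\{j,k\}$, and $\{i,k\}$, a local planarity argument (examining the rotations at $a$ and $a'$ restricted to $\{b,c,d\}$ together with the forced crossings) shows that a 3-cycle tournament on $\{i,j,k\}$ produces rotational constraints that cannot simultaneously be realised in the simple drawing $J$. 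Hence only transitive tournaments survive, establishing (II).

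The main obstacle will be the classification step in the first stage: one must enumerate all possible crossing types of the 4-cycle $a\,b\,a'\,c$ and, in each case, verify that the naturality constraints passed down from $[\oA,\oB_i]$ and $[\oA,\oB_j]$ restrict the monochromatic Ramsey outcome to precisely the two allowed cases. Once (I) is in hand, the transitivity argument of (II) reduces to a comparatively compact local check, but still requires careful handling of rotations at the crossings to rule out the cyclic tournament scenario and conclude $\cycper{A\cdot B_i\cdot B_k}$-\outerdness of $J[\edges{A}{B_i\cup B_k}]$.
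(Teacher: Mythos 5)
Your overall architecture for (I) is the same as the paper's: colour the $4$-tuples $\{a,a',b,c\}$ by the crossing type of the induced $K_{2,2}$, extract a monochromatic configuration by Ramsey's theorem, argue that only two colours can survive, and then assemble the \outer structure via naturality (the paper does this last step through Lemma~\ref{lem:kon}, which is exactly the ``naturality of $[A,B_i\cdot B_j]$'' statement you describe). The gap is in the step you yourself flag as the main obstacle, and your stated mechanism for it is not right: naturality of $[\oA,\oB_i]$ and of $[\oA,\oB_j]$ places \emph{no} constraint on an individual $4$-tuple, since it says nothing about crossings between an edge going to $\oB_i$ and an edge going to $\oB_j$. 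In particular the crossing-free type and the two ``bad'' crossing types are each perfectly compatible, on a single $4$-tuple, with both pairs being natural. The exclusion only works after the Ramsey step, by comparing monochromatic $4$-tuples that share three vertices: one takes $a<_{A}a'<_{A}a''$ and examines the $K_{2,3}$ on $\{a,a',a''\}$ versus $\{b,c\}$, where what naturality actually supplies is that $b$ and $c$ have the same rotation $\cycper{a,a',a''}$. A crossing-free such $K_{2,3}$ is impossible (a planar $K_{2,3}$ forces opposite rotations at the two vertices of the $2$-class), and the classification of the drawings with equal rotations and one crossing per $K_{2,2}$ (the paper's Observation~\ref{obs:k23two}) is what eliminates the remaining two colours. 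Without this passage to triples in $\oA$, your plan for (I) does not close.

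For (II) your extra Ramsey step on $5$-tuples is unnecessary: once (I) holds for all pairs, transitivity is a direct consequence of the structure already in hand, because a cyclic triple would force, for \emph{every} choice of $a<_{A}a'$, $b\in B_i$, $c\in B_j$, $d\in B_k$, the three $K_{2,2}$'s to have their unique crossings in the cyclically inconsistent pattern $ab\times a'c$, $ac\times a'd$, $ad\times a'b$, and no simple drawing realizes this. The paper derives the contradiction from an unrealizable $K_{3,3}$ on three $\oA$-vertices (Figure~\ref{fig:1765}); a face-tracing argument on the $K_{2,3}$ spanned by $a,a',b,c,d$ also works, so your two-vertex local argument is salvageable. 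But as with (I), the entire mathematical content of the lemma lives in these small case analyses, and the proposal does not carry any of them out.
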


Deferring the proof of this lemma for a moment, let us show that indeed it easily implies Proposition~\ref{pro:core}. 

\begin{proof}[Proof of Proposition~\ref{pro:core}, assuming Lemma~\ref{lem:big}]
The hypotheses of Proposition~\ref{pro:core} and Lemma~\ref{lem:big} are identical. Working under these hypotheses, Lemma~\ref{lem:big} guarantees that there exist subpermutations $A\preceq \oA$ and $B_i\preceq \oB_i$ for $i=1,\ldots,r$ with $|A|=|B_1|=\cdots=|B_{r}|=q$ such that (I) and (II) hold for any distinct integers $i,j,k\in[r]$. Put together, (I) and (II) imply the existence of a permutation $\perm{\pi(1),\ldots,\pi(r)}$ of $[r]$ such that $J[\edges{A}{\bigl(B_{\pi(i)} \cup B_{\pi(j)}\bigr)}]$ is a $\cycper{A \cdot B_{\pi(i)}\cdot B_{\pi(j)}}$-{\outerd} for all $1\le i < j \le r$. Thus the hypotheses of Lemma~\ref{lem:kon1} hold if we let $C_i=B_{\pi(i)}$ for $i=1,\ldots,r$, and so applying that lemma we obtain that $J[\edges{A}{\bigl( B_{1}\cup \,\,\cdots\,\,\cup B_{r}\bigr)}]=J[\edges{A}{\bigl( B_{\pi(1)}\cup \,\,\cdots\,\,\cup B_{\pi(r)}\bigr)}]$ is a $\cycper{A\cdot B_{\pi(1)}\cdot \,\,\cdots\,\,\cdot B_{\pi(r)}}$-\outerd.
\end{proof}

We devote the rest of the section to the proof of Lemma~\ref{lem:big}. 

\subsection{Proof of Lemma~\ref{lem:big}}

In the proof we use the notion of rotation at a vertex. Let $v$ be a vertex and $U$ a set of vertices in a graph, such that every vertex in $U$ is adjacent to $v$. If $D$ is a drawing of $G$, the {\em rotation} $\rotvU{v}{U}{D}$ {\em of $U$ at $v$ in $D$} is the cyclic permutation that records the clockwise cyclic order in which the edges incident with the vertices of $U$ leave $v$. For instance, in the drawing $D$ in Figure~\ref{fig:445}(a) (or (b)) for each $i=1,2,3,4$ we have that $\rotvU{1(a_i)}{\{2(b_1),2(b_2),2(b_4)\}}{D}=\cycper{2(b_1),2(b_2),2(b_4)}$. On the other hand, if $D$ is the drawing in Figure~\ref{fig:445}(c) (or (d)) then $\rotvU{1(a_i)}{\{2(b_1),2(b_2),2(b_4)\}}{D}=\cycper{2(b_4),2(b_2),2(b_1)}$, for $i=1,2,3,4$. 

\begin{proof}[Proof of Lemma~\ref{lem:big}]
We start by noting that in order to prove (I) it suffices to prove it for an arbitrary pair of distinct integers $i,j$, as then an iterative application of this yields the result for all pairs of distinct integers. Moreover, for notation simplicity instead of working with a pair $\oB_i,\oB_j$ of permutations we simply relabel $\oB_i$ with $\oB$ and $\oB_j$ with $\oC$.

Thus we assume that $\oA,\oB$, and $\oC$ are pairwise disjoint permutations of vertices in a graph $G$ with $|\oA|=|\oB|=|\oC|=Q$, such that every vertex in $\oA$ is adjacent to every vertex in $\oB\cup \oC$. We suppose that $J$ is a drawing of $G$ such that $J[\edges{\oA}{\oB}]$ is a $\cycper{\oA \cdot \oB}$-{\outerd} and $J[\edges{\oA}{\oC}]$ is a $\cycper{\oA \cdot \oC}$-{\outerd}. In order to prove (I) we must show the following.

\vglue 0.4 cm
\noindent{($\dag$) }{\sl If $Q$ is sufficiently large compared to $q$, then there exist subpermutations $A\preceq \oA,B\preceq \oB$, and $C\preceq\oC$ with $|A|=|B|=|C|=q$ such that $J[\Edges{A}{B\cup C}]$ is either a  $\cycper{A\cdot B\cdot C}$-drawing or a $\cycper{A\cdot C\cdot B}$-\outerd.}
\vglue 0.4 cm 

To prove ($\dag$) we start by constructing an auxiliary $4$-uniform hypergraph $\hh$, where each $4$-edge consists of two vertices in $\oA$, one vertex in $\oB$, and one vertex in $\oC$. We colour each $4$-edge in $\hh$ with one of five colours $\eta_0,\eta_1,\eta_2,\eta_3,\eta_4$, according to the following rules, illustrated in Figure~\ref{fig:355}. Let $\{a,a',b,c\}$ be a $4$-edge in $\hh$, where $a$ and $a'$ are in $\oA$ and $a<_{\oA}a'$, $b\in\oB$ and $c\in\oC$. Then:

\begin{itemize}

\item if $J[\edges{\{a,a'\}}{\{b,c\}}]$ has no crossings in $J$, then we colour $\{a,a',b,c\}$  with $\eta_0$;

\item for $i=1,2,3,4$, if $J[\edges{\{a,a'\}}{\{b,c\}}]$ has a crossing $x$ in $J$, and the rotation at $x$ is as in $\eta_i$ in Figure~\ref{fig:355}, then we colour $\{a,a',b,c\}$  with $\eta_i$.

\end{itemize}

\begin{figure}[ht!]
\def\ta#1{{\Scale[1.6]{#1}}}
\def\tta#1{{\Scale[1.8]{#1}}}
\def\tb#1{{\Scale[1.4]{#1}}}
\centering
\scalebox{0.4}{\input{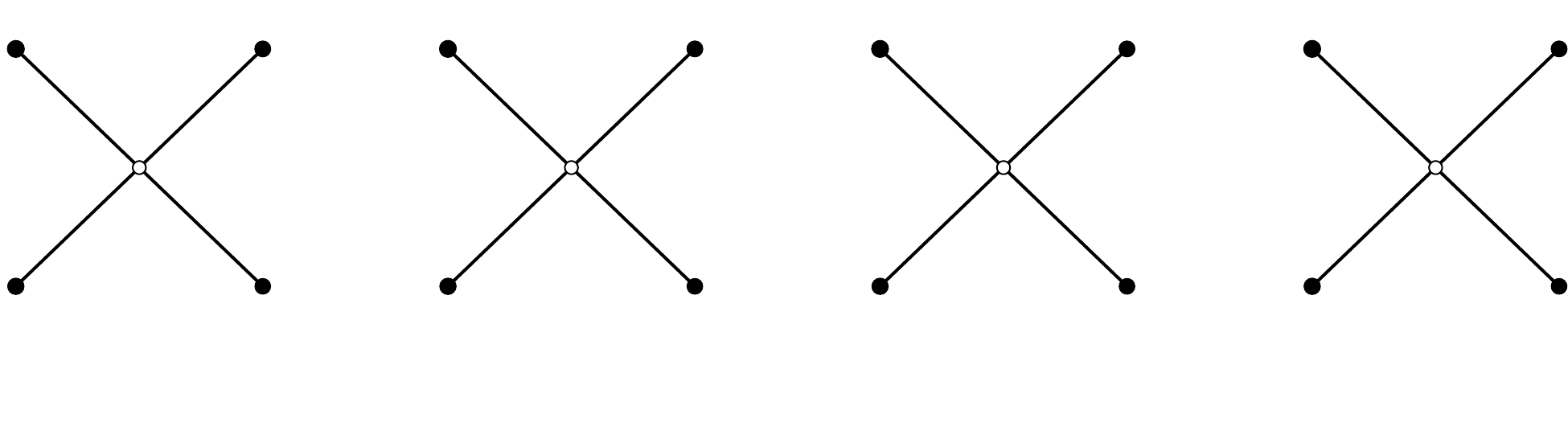_t}}
\caption{Illustration of the rules for colouring each $4$-edge in $\hh$.}
\label{fig:355}
\end{figure}

Clearly, $\{a,a',b,c\}$ receives (exactly) one colour in $\{\eta_0,\eta_1,\eta_2,\eta_3,\eta_4\}$, as $J[\edges{\{a,a'\}}{\{b,c\}}]$ is a drawing of a $K_{2,2}$ and every simple drawing of $K_{2,2}$ has exactly zero or one crossings. If this drawing of $K_{2,2}$ has no crossings, it receives colour $\eta_0$. If it has one crossing, then the colour depends on the rotation at the crossing, according to the given rules.

By Ramsey's theorem (we can use for instance~\cite[Theorem 5]{ramseybook}) if $Q$ is sufficiently large compared to $q$ then there exist $A\preceq \oA, B\preceq\oB$, and $C\preceq \oC$ with $|A|=|B|=|C|=q$ such that all $4$-edges contained in $A\cup B\cup C$ have the same colour $\chi$. 

Note that the assumptions that $J[\edges{\oA}{\oB}]$ is a $\cycper{\oA\cdot \oB}$-{\outerd} and $J[\edges{\oA}{\oC}]$ is a $\cycper{\oA\cdot \oC}$-{\outerd} imply that $J[\edges{A}{B}]$ is a $\cycper{A\cdot B}$-{\outerd} and $J[\edges{A}{C}]$ is a $\cycper{A\cdot C}$-{\outerd}. 

We claim that $\chi$ is either $\eta_1$ or $\eta_2$, and that in either case we are done. To prove the latter assertion, suppose that $\chi$ is $\eta_1$. Let $a$ and $a'$ be any vertices in $A$ such that $a<_{A} a'$, let $b$ be any vertex in $B$, and let $c$ be any vertex in $C$. By assumption all $4$-edges in $\hh$ have colour $\eta_1$, and so $\{a,a',b,c\}$ has colour $\eta_1$. Therefore the edge $a c$ crosses the edge $a'\, b$ in $J$, and the rotation at this crossing in $J$ is $\cycper{a,a',c,b}$. In view of Lemma~\ref{lem:kon} it follows that $J[\edges{A}{(B\cup C)}]$ is a $\cycper{A\cdot C\cdot B}$-{\outerd}, and so we are done.

Totally analogous arguments show that if $\chi$ is $\eta_2$, then $J[\edges{A}{(B\cup C)}]$ is a $\cycper{A\cdot B\cdot C}$-{\outerd}, and so also in this case we are done.

Thus, in order to finish the proof of ($\dag$) it remains to show that $\chi$ is necessarily either $\eta_1$ or $\eta_2$. To prove this first let $a,a'$ and $a''$ be vertices in $A$ such that $a <_{A}a' <_{A} a''$. Let $b$ be any vertex in $B$, and let $c$ be any vertex in $C$. 

Since $J[\edges{A}{B}]$ is a $\cycper{A\cdot B}$-{\outerd} it follows that $\rotvU{b}{\{a,a',a''\}}{J}$ is $\cycper{a,a',a''}$. Similarly, since $J[\edges{A}{C}]$ is a $\cycper{A\cdot C}$-{\outerd} it follows that $\rotvU{c}{\{a,a',a''\}}{J}$ is also $\cycper{a,a',a''}$. Thus $b$ and $c$ have the same rotation in $J[\edges{\{a,a',a''\}}{\{b,c\}}]$.

Suppose first that $\chi=\eta_0$. This implies that $J[\edges{\{a,a',a''\}}{\{b,c\}}]$ has no crossings. On the other hand, it is straightforward to check that no drawing of $K_{2,3}$ in which the vertices in the $2$-class have the same rotation can have zero crossings. This contradiction implies that $\chi$ cannot be $\eta_0$. 

Therefore $\chi$ is $\eta_i$ for some $i\in\{1,2,3,4\}$. We note that in particular this implies that each $K_{2,2}$ in $J[\edges{\{a,a',a''\}}\{{b,c}\}]$ has one crossing.

We now invoke the following.

\begin{observation}\label{obs:k23two}
{\em If $L$ is a drawing of $K_{2,3}$ such that (i) the rotations at the vertices in the $2$-class are the same, and (ii) each $K_{2,2}$ has one crossing, then $L$ is isomorphic to one of the drawings in Figure~\ref{fig:1745}.}
\end{observation}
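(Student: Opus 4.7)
The proof proceeds by a case analysis on the rotation system of $L$. Let the $2$-class be $\{a,a'\}$ and the $3$-class be $\{b_1,b_2,b_3\}$. By hypothesis (i) the common rotation at $a$ and $a'$ is some $3$-cyclic permutation of $\{b_1,b_2,b_3\}$; after relabelling I may assume this common rotation is $\cycper{b_1,b_2,b_3}$.

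First I would enumerate, for each pair $\{i,j\}\subset[3]$, the unique crossing of the subdrawing $L[\{a,a',b_i,b_j\}]$ guaranteed by hypothesis (ii): since $K_{2,2}$ has only two pairs of independent edges, this crossing is either between $ab_i$ and $a'b_j$, or between $ab_j$ and $a'b_i$. This yields $2^3=8$ a priori combinations of crossing patterns, which under the natural symmetry group generated by the cyclic permutation of the indices $\{1,2,3\}$ (compatible with our choice of vertex rotation at $a$ and $a'$) together with the involution $a\leftrightarrow a'$ collapses to a much shorter list of representative cases.

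Next, for each surviving case I would show that the entire rotation system of $L$ is then forced: the vertex rotations at $b_1,b_2,b_3$ are each cyclic permutations of $\{a,a'\}$ and so are trivially determined, while the rotation at each of the three crossings is pinned down by the local picture in Figure~\ref{fig:355} once the vertex rotations at $a,a'$ and the crossing pattern are fixed. Since two simple drawings on $\sphere$ are isomorphic if and only if they induce the same rotation system on the planarisation (the plane graph obtained by replacing each crossing by a degree-$4$ vertex), it follows that each combinatorially consistent case corresponds to at most one drawing up to isomorphism.

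The main obstacle will be realisability: some of the combinatorial possibilities produced by the enumeration are not achievable by any actual drawing on $\sphere$ and must be excluded. For this I would either exhibit the drawing directly (in the surviving cases) or, in the excluded ones, apply Euler's formula to the planarisation to derive a contradiction from the prescribed rotations. After this verification, the surviving cases are in bijection with the drawings displayed in Figure~\ref{fig:1745}, which completes the proof.
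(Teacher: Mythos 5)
The paper does not actually supply a proof of Observation~\ref{obs:k23two}; it is stated bare and used in the proof of Lemma~\ref{lem:big}, with the tacit understanding (made explicit for the sister Observation~\ref{obs:k23one}) that it "follows from a tedious but straightforward exercise." So there is no argument in the paper against which to compare your proposal; you are filling a gap the authors left to the reader, and your overall strategy — fix the common rotation at the $2$-class, enumerate the $2^3$ crossing patterns, cut down by symmetry, then decide realizability — is the right kind of finite case check.

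There is, however, a genuine gap in the middle step. You assert that "the rotation at each of the three crossings is pinned down \ldots once the vertex rotations at $a,a'$ and the crossing pattern are fixed," citing Figure~\ref{fig:355}. This is not a consequence of that figure, which is merely a lookup table for the colouring in Lemma~\ref{lem:big}; it encodes the crossing rotation, it does not derive it. For a single crossing between $ab_i$ and $a'b_j$ the rotation is a priori one of two opposite cyclic orders, and the degree-$2$ rotations at $b_i,b_j$ give no information, so locally nothing is pinned down. What constrains the crossing rotations is the global requirement that the planarization embed in $\sphere$ (equivalently, that the rotation system have genus zero), and that constraint involves the other crossings simultaneously. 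Your sketch also omits another piece of data needed to reconstruct the planarization's rotation system, namely the order in which a single edge meets its several crossings: in most of your eight patterns some edge $ab_i$ crosses both $a'b_j$ and $a'b_k$, and the two orders of those two crossings along $ab_i$ yield a priori distinct rotation systems. Both of these choices must either be enumerated alongside the crossing pattern or shown to be forced by the genus-zero condition; as written, the proposal silently assumes they are forced, which is exactly the nontrivial content of the observation. The fix is conceptually minor — fold the crossing rotations and crossing orders into the enumeration and let the Euler-characteristic computation on the planarization do the elimination — but as stated the "pinned down" claim is unjustified and the case list is too short.
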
 

\def\ta#1{{\Scale[3.0]{{#1}}}}
\def\tarm#1{{\Scale[3.0]{\text{\rm #1}}}}
\begin{figure}[ht!]
\centering
\scalebox{0.29}{\input{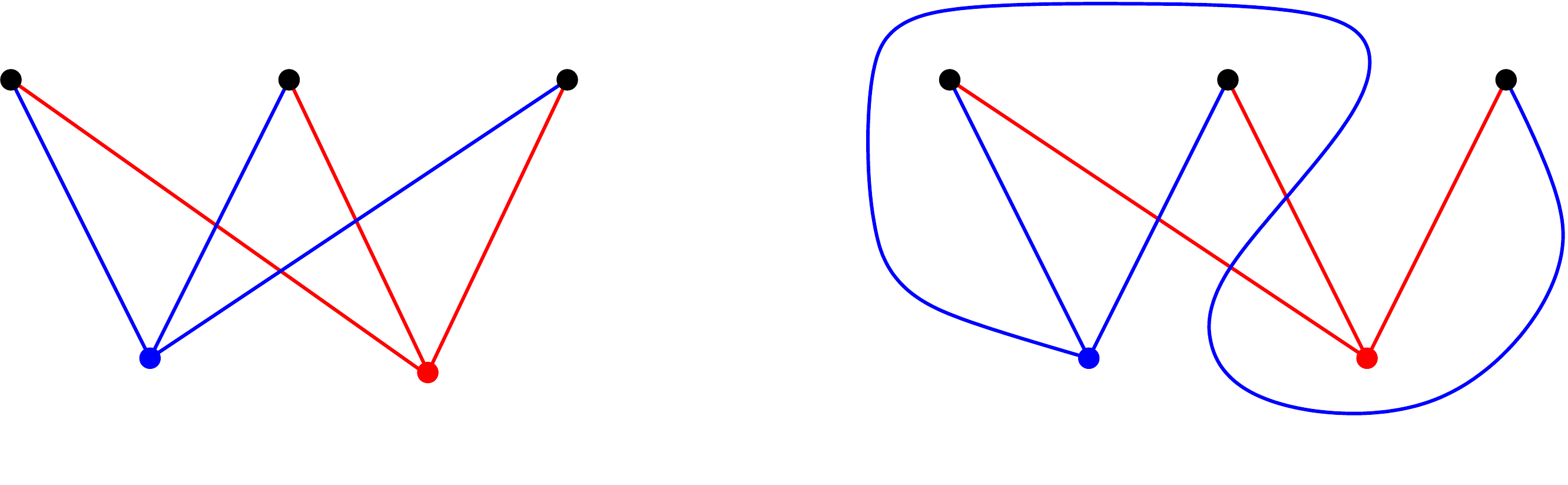_t}}
\caption{The only two drawings (up to isomorphism) of $K_{2,3}$ in which the vertices in the $2$-class have the same rotation, and each $K_{2,2}$ has one crossing.}
\label{fig:1745}
\end{figure}

In view of this observation the previous remarks imply that $J[\edges{\{a,a',a''\}}{\{b,c\}}]$ is isomorphic to one of the drawings in Figure~\ref{fig:1745}. This leaves open only the question of which vertex (blue or red) is $b$ and which is $c$, and which black vertex is $a$, which one is $a'$, and which one is $a''$. We recall that by assumption all $4$-edges in $J[\edges{\{a,a',a''\}}{\{b,c\}}]$ have the same colour $\chi$. 

This yields $2!\cdot 3!=12$ potential labellings for each of the drawings in Figure~\ref{fig:1745}. It is a tedious but straightforward task to verify that the only two labellings that satisfy that all $4$-edges are of the same colour are the ones shown in Figure~\ref{fig:1750}. In the labelling in Figure~\ref{fig:1750}(a) all $4$-edges are of colour $\eta_1$, and in the labelling in (b) all edges are of colour $\eta_2$. Therefore $\chi$ must be either $\eta_1$ or $\eta_2$, as claimed. This completes the proof of ($\dag$), and hence of (I).

\def\ta#1{{\Scale[3.0]{{#1}}}}
\def\tarm#1{{\Scale[3.0]{\text{\rm #1}}}}
\begin{figure}[ht!]
\centering
\scalebox{0.29}{\input{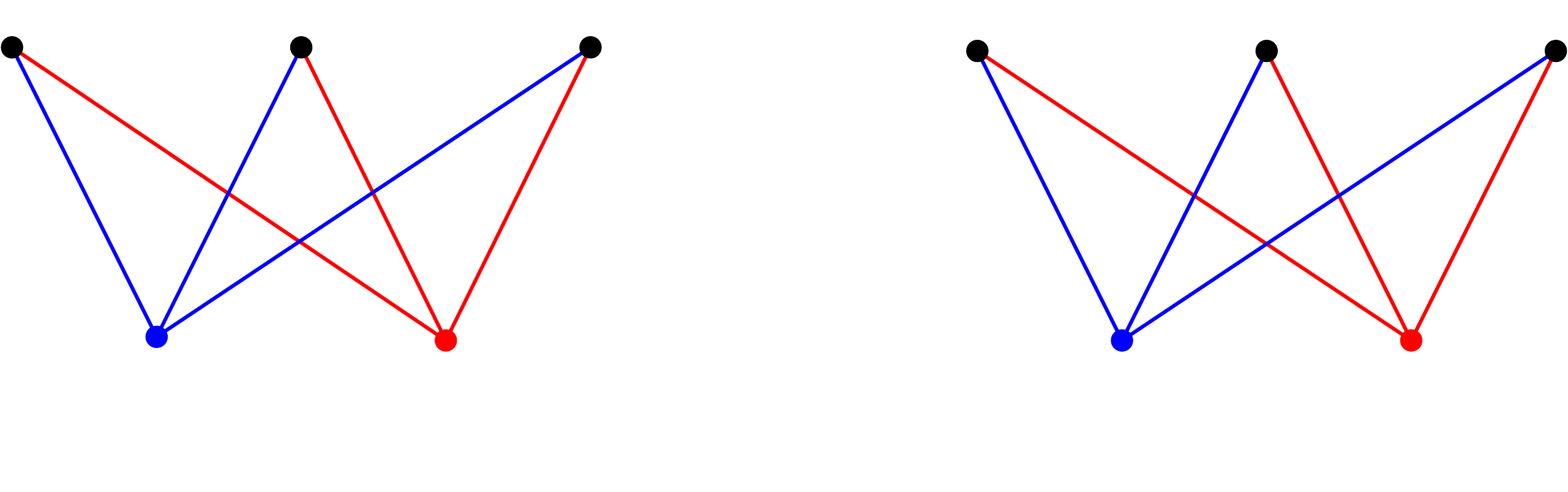_t}}
\caption{Illustration of the conclusion of the proof of ($\dag$) in the proof of Lemma~\ref{lem:big}.}
\label{fig:1750}
\end{figure}

We start the proof of (II) by reviewing where we stand. Having proved (I) we have subpermutations $A,B_1,\ldots,B_r$ such that $J[\Edges{A}{B_i\cup B_j}]$ is either a $\cycper{A\cdot B_i\cdot B_j}$-drawing or a $\cycper{A\cdot B_j\cdot B_i}$-drawing for all distinct $i,j\in[r]$. 

Now (II) claims a transitivity property: if $i,j,k$ are any distinct integers in $[r]$ such that $J[\Edges{A}{B_i\cup B_j}]$ is a $\cycper{A\cdot B_i\cdot B_j}$-drawing and $J[\edges{A}{B_j\cup B_k}]$ is a $\cycper{A\cdot B_j\cdot B_k}$-drawing, then $J[\edges{A}{B_i\cup B_k}]$ is a $\cycper{A\cdot B_i\cdot B_k}$-drawing. 

As in the proof of (I), we may as well simplify the notation and prove the statement after relabeling $B_i\mapsto B, B_j\mapsto C$, and $B_k\mapsto D$. Thus in order to prove (II) we need to prove the following: 

\vglue 0.4 cm

\noindent ($\ddag$) If $J[\Edges{A}{B\cup C}]$ is a $\cycper{A\cdot B\cdot C}$-drawing and $J[\edges{A}{C\cup D}]$ is a $\cycper{A\cdot C\cdot D}$-drawing, then $J[\edges{A}{B\cup D}]$ is a $\cycper{A\cdot B\cdot D}$-drawing.

\vglue 0.4 cm

We prove ($\ddag$) by way of contradiction. We assume that (i) $J[\Edges{A}{B\cup C}]$ is a $\cycper{A\cdot B\cdot C}$-drawing and (ii) $J[\edges{A}{C\cup D}]$ is a $\cycper{A\cdot C\cdot D}$-drawing, and yet $J[\edges{A}{B\cup D}]$ is not a $\cycper{A\cdot B\cdot D}$-drawing. 

To obtain the required contradiction we start by noting that (I) implies that $J[\edges{A}{B\cup D}]$ is either a $\cycper{A\cdot B\cdot D}$-drawing or a $\cycper{A\cdot D\cdot B}$-drawing (recall that we relabelled $B_i\mapsto B$ and $B_k\mapsto D$). Thus the assumption that $J[\edges{A}{B\cup D}]$ is not a $\cycper{A\cdot B\cdot D}$-drawing implies that necessarily (iii) $J[\edges{A}{B\cup D}]$ is a $\cycper{A\cdot D\cdot B}$-drawing. Thus to prove ($\ddag$) we need to derive a contradiction from (i), (ii) and (iii).

Let $a,a',a''$ be vertices in $A$ such that $a<_{A}a'<_{A} a''$. Let $b$ be a vertex in $B$, let $c$ be a vertex in $C$, and let $d$ be a vertex in $D$.

Since by assumption $J[\edges{A}{(B \cup C)}]$ is a $\cycper{A\cdot B \cdot C}$-{\outerd} it follows that $J[\edges{\{a,a',a''\}}{\{b,c\}}]$ is isomorphic to the drawing in Figure~\ref{fig:1765}(a). 

\def\ta#1{{\Scale[3.0]{{#1}}}}
\def\tarm#1{{\Scale[3.0]{\text{\rm #1}}}}
\begin{figure}[ht!]
\centering
\scalebox{0.29}{\input{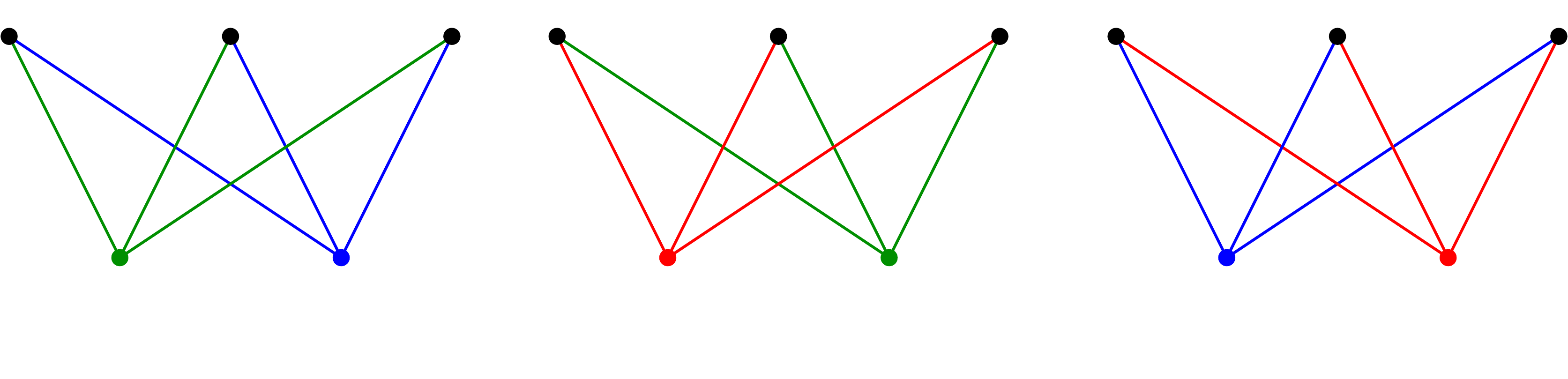_t}}
\caption{Illustration of the conclusion of the proof of ($\ddag$) in the proof of Lemma~\ref{lem:big}.}
\label{fig:1765}
\end{figure}

Similarly, since $J[\edges{A}{(C \cup D)}]$ is a $\cycper{A\cdot C \cdot D}$-{\outerd} it follows that $J[\edges{\{a,a',a''\}}{\{c,d\}}]$ is isomorphic to the drawing in Figure~\ref{fig:1765}(b). 

Finally, since $J[\edges{A}{(B \cup D)}]$ is a $\cycper{A\cdot D \cdot B}$-{\outerd} it follows that $J[\edges{\{a,a',a''\}}{\{b,d\}}]$ is isomorphic to the drawing in Figure~\ref{fig:1765}(c). 

Thus $J[\edges{\{a,a',a''\}}{\{b,c,d\}}]$ must be a drawing of $K_{3,3}$ that contains drawings isomorphic to the drawings in Figure~\ref{fig:1765} as subdrawings, with the labels shown. But it is straightforward to verify that no such drawing of $K_{3,3}$ can possibly exist. This contradiction proves ($\ddag$) and hence (II).
\end{proof}

\section{Proof of Proposition~\ref{pro:core2}}\label{sec:core2}

Let us recall Proposition~\ref{pro:core2} for easy reference. 

\vglue 0.4 cm
\noindent \textbf{Proposition~\ref{pro:core2}. }
{\em Let $Q > q \ge 1$ be integers. Let $\oA, \oB$, and $\oC$ be pairwise disjoint permutations of vertices in a graph $G$, with $|\oA|=|\oB|=|\oC|=Q$, such that every vertex in $\oA$ is adjacent to every vertex in $\oB\cup\oC$. Let $J$ be a drawing of $G$ such that $J[\Edges{\oA}{\oB}]$ is a $\cycper{\oA\cdot\oB}$-drawing and $J[\Edges{\oA}{\oC}]$ is a $\cycper{\Rev{\oA}\cdot\oC}$-drawing. If $Q$ is sufficiently large compared to $q$, then there exist subpermutations $A\preceq \oA, B\preceq \oB$, and $C\preceq\oC$ with $|A|=|B|=|C|=q$ such that no edge in $J[\Edges{A}{B}]$ crosses an edge in $J[\Edges{A}{C}]$.}

\begin{proof}[Proof of Proposition~\ref{pro:core2}]
As in the proof of Lemma~\ref{lem:big}, we construct an auxiliary $4$-uniform hypergraph $\hh$, where each $4$-edge consists of two vertices in $a,a'$ in $\oA$, one vertex $b$ in $\oB$, and one vertex $c$ in $\oC$. We colour each $4$-edge $\{a,a',b,c\}$ in $\hh$ with one of the five colours $\eta_0,\eta_1,\eta_2,\eta_3$, or $\eta_4$, using the same rules as in the proof of Lemma~\ref{lem:big}.

By Ramsey's theorem if $Q$ is sufficiently large compared to $q$ then there exist $A\preceq \oA, B\preceq\oB$, and $C\preceq \oC$ with $|A|=|B|=|C|=q$ such that all $4$-edges contained in $A\cup B\cup C$ have the same colour $\chi$. 

We claim that $\chi$ is necessarily $\eta_0$. We note that this completes the proof, as this implies precisely that no edge in $J[\edges{A}{B}]$ crosses an edge in $J[\edges{A}{C}]$. 

For a contradiction, suppose that $\chi$ is $\eta_i$ for some $i\in\{1,2,3,4\}$.

Let $a,a'$ and $a''$ be vertices in $A$ such that $a<_{A }a' <_{A } a''$. Let $b$ be any vertex in $B$, and let $c$ be any vertex in $C$. 

Since $J[\edges{A}{B}]$ is a $\cycper{A\cdot B}$-drawing it follows that $\rotvU{b}{\{a,a',a''\}}{J}$ is $\cycper{a,a',a''}$, and since $J[\edges{A}{C}]$ is a $\cycper{\Rev{A}\cdot C}$-drawing it follows that $\rotvU{b}{\{a,a',a''\}}{J}$ is $\cycper{a'',a',a}$. 

We also note that the assumption that $\chi$ is $\eta_i$ for some $i\in\{1,2,3,4\}$ implies that each $K_{2,2}$ in $J[\edges{\{a,a',a''\}}{\{b,c\}}]$ has one crossing.

We now invoke the following observation, which follows from a tedious but straightforward exercise.

\begin{observation}\label{obs:k23one}
{\em If $L$ is a drawing of $K_{2,3}$ such that (i) the rotations at the vertices in the $2$-class are distinct, and (ii) each $K_{2,2}$ has one crossing, then $L$ is isomorphic to the drawing in Figure~\ref{fig:1775}.}
\end{observation}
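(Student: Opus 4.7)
The strategy mirrors that of Observation \ref{obs:k23two}, and reduces to a finite case analysis once the rotations at the 2-class are normalized.

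Denote the 2-class by $\{u,v\}$ and the 3-class by $\{1,2,3\}$. Since $\{1,2,3\}$ admits exactly two cyclic orderings, condition (i) forces, after possibly swapping $u$ and $v$ and relabeling the 3-class, that $\rotvU{u}{\{1,2,3\}}{L}=\cycper{1,2,3}$ and $\rotvU{v}{\{1,2,3\}}{L}=\cycper{1,3,2}$. Observe that the rotations at the degree-$2$ vertices $1,2,3$ are trivial, so the vertex rotation system of $L$ is completely determined by this normalization.

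By (ii), each sub-$K_{2,2}$ $L[\{u,v,i,j\}]$ has exactly one crossing, hence is of one of the four combinatorial types depicted in Figure \ref{fig:445}, classified by which pair of edges crosses together with the rotation at the crossing. I would run through the three choices $\{i,j\} \in \{\{1,2\},\{1,3\},\{2,3\}\}$ and, using Lemma \ref{lem:outerplanar} applied to the restrictions of the rotations at $u$ and $v$ to $\{i,j\}$, read off the type of each sub-$K_{2,2}$. This pins down, for each pair, which pair of edges crosses and what the rotation at that crossing is. Together with the vertex rotations, this yields the full rotation system of $L$.

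To conclude, I would invoke the (elementary, for a graph as small as $K_{2,3}$) fact that a consistent spherical rotation system determines a simple drawing up to an orientation-preserving self-homeomorphism of the sphere, and verify by inspection that the drawing of Figure \ref{fig:1775} realizes the rotation system computed above. This gives $L \cong$ Figure \ref{fig:1775}.

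The main obstacle is the bookkeeping in the middle paragraph: one has $2^3=8$ a priori choices for which edge of each sub-$K_{2,2}$ crosses, and one must verify that exactly one of these is compatible with the normalized rotations at $u$ and $v$. This is the analogue of the twelve-labeling check in the proof of Observation \ref{obs:k23two}; there, the coincidence of the rotations at the 2-class left a twofold ambiguity corresponding to the two drawings of Figure \ref{fig:1745}. Here, the distinctness of the rotations asserted in (i) eliminates precisely the symmetry responsible for that ambiguity, leaving a single isomorphism class; verifying that collapse is the crux of the ``tedious but straightforward'' exercise acknowledged in the statement.
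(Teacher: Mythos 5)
Your overall plan---normalize the rotations at the $2$-class, then do a finite case check---is exactly the ``tedious but straightforward exercise'' the paper alludes to (it gives no proof of Observation~\ref{obs:k23one}). The normalization step in your first paragraph is sound, and your last paragraph correctly locates the real work in the $2^3=8$ crossing configurations. The trouble is the middle paragraph: Lemma~\ref{lem:outerplanar} cannot be used to ``read off the type of each sub-$K_{2,2}$'' from the ``restrictions of the rotations at $u$ and $v$ to $\{i,j\}$.'' A cyclic rotation restricted to a $2$-element subset is trivial (there is only one cyclic order on two elements), so those restrictions carry no information; and indeed a one-crossing drawing of $K_{2,2}$, all of whose vertices have degree $2$, has trivial vertex rotations yet still admits two choices of crossing pair. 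What actually constrains the crossing data is the global requirement that the three sub-$K_{2,2}$'s fit together into a single simple drawing of $K_{2,3}$ with the normalized rotations at $u$ and $v$, not a per-pair application of Lemma~\ref{lem:outerplanar}. Your last paragraph quietly corrects for this by switching to the direct $8$-case check, which is the right move.

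One further caveat if you execute that check by reconstructing $L$ from discrete data: in several of the eight configurations some edge carries two crossings, and then the order of the crossings along that edge is additional information beyond ``which pairs cross'' and the vertex rotations. Either incorporate that datum into the reconstruction argument, or bypass it by checking realizability of each configuration directly (for a graph this small the latter is painless). Also, since the paper's notion of isomorphism allows orientation-reversing self-homeomorphisms of $\sphere$, you should not insist on \emph{orientation-preserving} in the final step; it does no harm here but is an unnecessary restriction. With these repairs the argument is correct and matches the intended proof.
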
 

\def\ta#1{{\Scale[3.0]{{#1}}}}
\def\tarm#1{{\Scale[3.0]{\text{\rm #1}}}}
\begin{figure}[ht!]
\centering
\scalebox{0.29}{\input{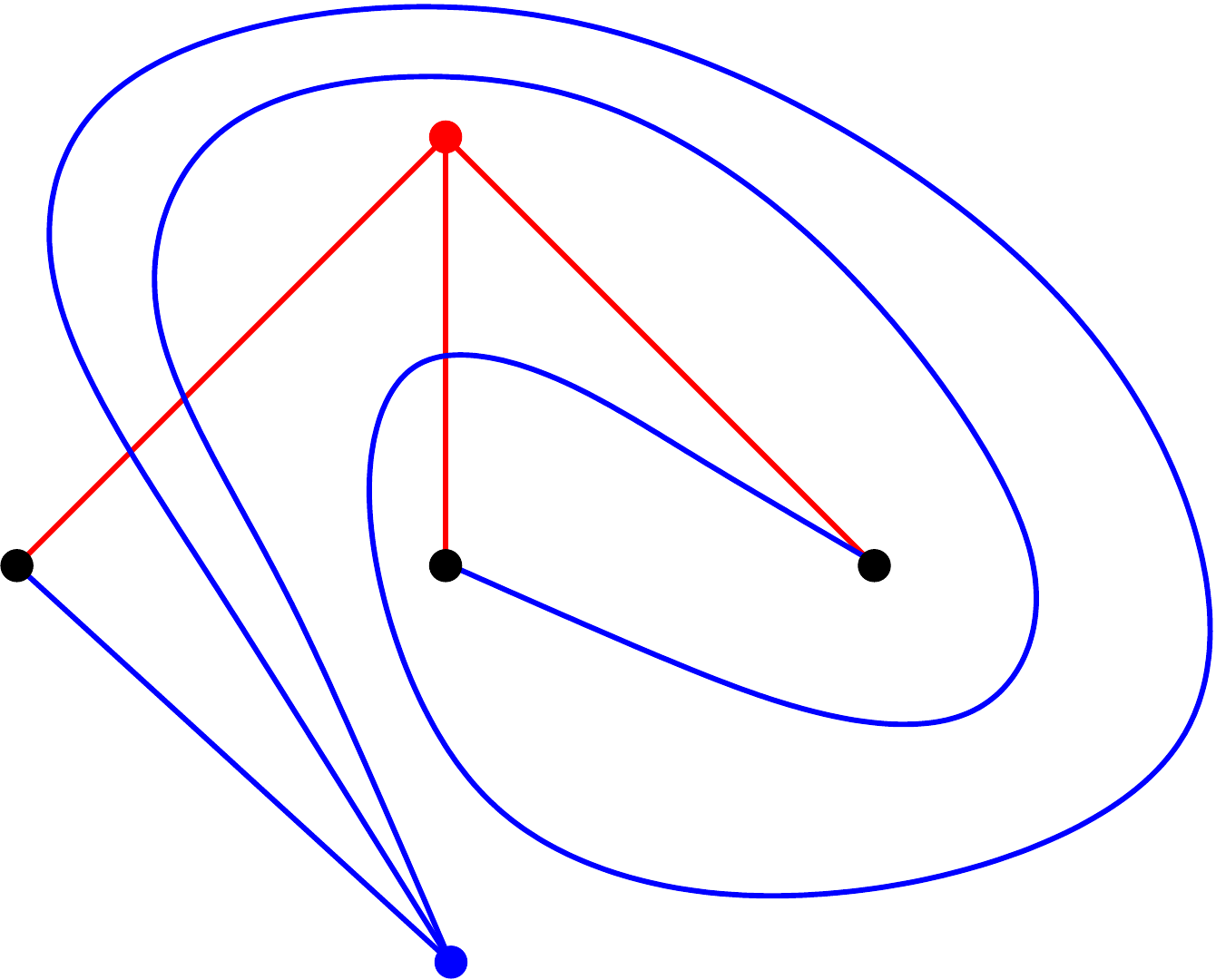_t}}
\caption{The unique (up to isomorphism) drawing of $K_{2,3}$ in which the vertices in the $2$-class have different rotations, and each $K_{2,2}$ has one crossing.}
\label{fig:1775}
\end{figure}

The discussion before Observation~\ref{obs:k23one} implies that  $J[\edges{\{a,a',a''\}}{\{b,c}\}]$ satisfies the hypotheses of Observation~\ref{obs:k23one}. Therefore $J[\edges{\{a,a',a''\}}{\{b,c}\}]$ is isomorphic to the drawing in Figure~\ref{fig:1775}. This leaves open only the question of which vertex (blue or red) is $b$ and which is $c$, and which black vertex is $a$, which one is $a'$, and which one is $a''$. We recall that by assumption all $4$-edges in $J[\edges{\{a,a',a''\}}{\{b,c}\}]$ have the same colour $\chi$.

This yields $2!\cdot 3!=12$ potential labellings for the drawing in Figure~\ref{fig:1775}. It is a tedious but straightforward task to verify that no labelling satisfies that all $4$-edges are of the same colour. We have thus obtained the required contradiction.
\end{proof}

\section{Proof of Lemma~\ref{lem:templates1}}\label{sec:prooftemplates1}

The proof of the lemma relies on the following trivial observation. We recall that if $J$ is a \outer drawing, then saying that $J$ has bounding order $\rho$ is equivalent to saying that $J$ is a $\rho$-\outerd.

\begin{remark}\label{rem:outerp}
Suppose that $I$ and $J$ are \outer drawings of the same graph with the same bounding order. Then two edges cross each other in $I$ if and only if they cross each other in $J$.
\end{remark}

\begin{proof}[Proof of Lemma~\ref{lem:templates1}]

Let $C,D$ be canonical of drawings of $K_n^m$ with the same template $\Gamma=\bigl((1^+,1_{-}),$ $\ldots,$ $(m^+,m_{-})\bigr)$, and let $\sigma$ be the sign function of $\Gamma$. Let $i^+=\perm{i^1,\ldots,i^{|\sigma^+(i)|}}$ and $i_{-}=\perm{i_1,\ldots,i_{|\sigma_{-}(i)|}}$ for each $i\in[m]$. The assumption that $C$ and $D$ are canonical means that:

\begin{enumerate}

\item[(C1)] $C[\edges{\bom{i}}{(\bom{i^1}\cup\cdots\cup\bom{i^{|\sigma^+(i)|}})}]$ is a $\cycper{\bom{i} \cdot \sigma(i,i^1)\, \bom{i^1}\cdot\,\,\,\cdots\,\,\,\cdot \sigma(i,i^{|\sigma^+(i)|})\, \bom{i^{|\sigma^+(i)|}}}$-\outerd; 

\item[(D1)] $D[\edges{\bom{i}}{(\bom{i^1}\cup\cdots\cup\bom{i^{|\sigma^+(i)|}})}]$ is a $\cycper{\bom{i} \cdot \sigma(i,i^1)\, \bom{i^1}\cdot\,\,\,\cdots\,\,\,\cdot \sigma(i,i^{|\sigma^+(i)|})\, \bom{i^{|\sigma^+(i)|}}}$-\outerd; 

\item[(C2)] $C[\edges{\bom{i}}{({\bom{i_1}}\cup\cdots\cup\bom{i_{|\sigma_{-}(i)|}})}]$ is a $\cycper{\Rev{\bom{i}} \cdot \sigma(i,i_1)\, \bom{i_1}\cdot\,\,\,\cdots\,\,\,\cdot \sigma(i,i_{|\sigma_{-}(i)|})\, \bom{i_{|\sigma_{-}(i)|}}}$-\outerd; 

\item[(D2)] $D[\edges{\bom{i}}{({\bom{i_1}}\cup\cdots\cup\bom{i_{|\sigma_{-}(i)|}})}]$ is a $\cycper{\Rev{\bom{i}} \cdot \sigma(i,i_1)\, \bom{i_1}\cdot\,\,\,\cdots\,\,\,\cdot \sigma(i,i_{|\sigma_{-}(i)|})\, \bom{i_{|\sigma_{-}(i)|}}}$-\outerd; 

\item[(C3)] no edge in $C[\edges{\bom{i}}{(\bom{i^1}\cup\cdots\cup\bom{i^{|\sigma^+(i)|}})}]$ crosses an edge in $C[\edges{\bom{i}}{({\bom{i_1}}\cup\cdots\cup\bom{i_{|\sigma_{-}(i)|}})}]$; and

\item[(D3)] no edge in $D[\edges{\bom{i}}{(\bom{i^1}\cup\cdots\cup\bom{i^{|\sigma^+(i)|}})}]$ crosses an edge in $D[\edges{\bom{i}}{({\bom{i_1}}\cup\cdots\cup\bom{i_{|\sigma_{-}(i)|}})}]$.

\end{enumerate}

In order to prove that $C$ and $D$ are weakly isomorphic we let $e$ and $e'$ be two edges in $K_n^m$, and show that $e$ and $e'$ cross in $C$ if and only if they cross in $D$.

Let $i,j,k,\ell\in[m]$ be integers such that $e$ is in $\edges{\bom{i}}{\bom{j}}$ and $e'$ is in $\edges{\bom{k}}{\bom{\ell}}$. Thus $i\neq j$ and $k\neq \ell$, since no edge has both endvertices in the same partite class.

We break the analysis into two cases.

\vglue 0.2cm
\noindent{\sc Case 1. }{\sl $i,j,k,\ell$ are not all distinct from each other.}
\vglue 0.2cm

We may assume without loss of generality that $i=k$. There are two possibilities: either $j=\ell$ or $j\neq \ell$.

\vglue 0.2cm
\noindent{\sc Subcase 1.1. }{\sl $j=\ell$.}
\vglue 0.2cm

In this case $e$ and $e'$ are both in $\edges{\bom{i}}{\bom{j}}$. Now $j$ is either in $i^+$ or in $i_-$. We assume that $j\in i^+$, as the other possibility is handled similarly. 

Since $j\in i^+$, (C1) implies that $C[\Edges{\bom{i}}{\bom{j}}]$ is a $\cycper{\bom{i}\cdot\,\sigma(i,j)\,\bom{j}}$-\outerd. Similarly, (D1) implies that $D[\edges{\bom{i}}{\bom{j}}]$ is also a $\cycper{\bom{i}\cdot\,\sigma(i,j)\,\bom{j}}$-\outerd. Thus, Remark~\ref{rem:outerp} implies that $e$ and $e'$ cross in $C$ if and only if they cross in $D$.

\vglue 0.2cm
\noindent{\sc Subcase 1.2. }{\sl $j\neq\ell$.}
\vglue 0.2cm

In this case there are four possibilities for $j$ and $\ell$, as each of them may belong to either $i^+$ or to $i_-$.

Suppose that they both belong to $i^+$, and that $j<_{i^+}\ell$ (the case in which $\ell<_{i^+}j$ is handled in a totally analogous manner). Then (C1) implies that $C[\edges{\bom{i}}{(\bom{j}\cup\bom{\ell})}]$ is a $\cycper{\bom{i}\cdot\,\sigma(i,j)\,\bom{j}\cdot\,\sigma(i,\ell)\,\bom{\ell}}$-\outerd. Similarly, (D1) implies that $D[\edges{\bom{i}}{(\bom{j}\cup\bom{\ell})}]$ is also a $\cycper{\bom{i}\cdot\,\sigma(i,j)\,\bom{j}\cdot\,\sigma(i,\ell)\,\bom{\ell}}$-\outerd. Thus, Remark~\ref{rem:outerp} implies that $e$ and $e'$ cross in $C$ if and only if they cross in $D$.

The case in which $j$ and $\ell$ both belong to $i_-$ is similarly handled, using (C2) and (D2) instead of (C1) and (D1).

Suppose now that one of $j$ and $\ell$ belongs to $i^+$, and the other belongs to $i_-$. Without loss of generality, suppose that $j\in i^+$ and $\ell\in i_-$. In this case $e$ is in $C[\edges{\bom{i}}{(\bom{i^1}\cup\cdots\cup\bom{i^{|\sigma^+(i)|}})}]$ and $e'$ is in $C[\edges{\bom{i}}{({\bom{i_1}}\cup\cdots\cup\bom{i_{|\sigma_{-}(i)|}})}]$, and so by (C3) $e$ and $e'$ do not cross each other in $C$.

Similarly, the assumptions that $j\in i^+$ and $\ell\in i_-$ imply that $e$ is in $D[\edges{\bom{i}}{(\bom{i^1}\cup\cdots\cup\bom{i^{|\sigma^+(i)|}})}]$ and $e'$ is in $D[\edges{\bom{i}}{({\bom{i_1}}\cup\cdots\cup\bom{i_{|\sigma_{-}(i)|}})}]$, and so by (D3) $e$ and $e'$ do not cross each other in $D$. Thus in this case we are also done, since $e$ and $e'$ cross neither in $C$ nor in $D$.

\vglue 0.2cm
\noindent{\sc Case 2. }{\sl $i,j,k,\ell$ are all distinct from each other.}
\vglue 0.2cm

Label $v_i$ the endvertex of $e$ that is in $\bom{i}$ and label $v_j$ the endvertex of $e$ that is in $\bom{j}$. Similarly, label $v_k$ the endvertex of $e'$ that is in $\bom{k}$ and label $v_\ell$ the endvertex of $e'$ that is in $\bom{\ell}$. Finally, for each $p\in[m]\setminus\{i,j,k,\ell\}$ let $v_p$ be any vertex in partite class $\bom{p}$.

We consider $C[\{v_1,v_2,\ldots,v_m\}]$, the subdrawing of $C$ induced by the vertices $v_1,\ldots,v_m$ and the edges that have both endvertices in $\{v_1,\ldots,v_m\}$. This is a drawing of a complete graph on $m$ vertices labelled $v_1,v_2,\ldots,v_m$. It is straightforward to verify that (C1),(C2), and (C3) together imply that for each $i\in[m]$, $\rotvU{v_i}{\{v_1,\ldots,v_{i-1},v_{i+1},\ldots,v_m\}}{C}=\cycper{v_{i^1},\ldots,v_{i^{|\sigma^+(i)|}},v_{i_1},\ldots,v_{i_{|\sigma_{-}(i)|}}}$.

We now consider $D[\{v_1,v_2,\ldots,v_m\}]$, which is also a drawing of a complete graph on $m$ vertices labelled $v_1,v_2,\ldots,v_m$. It is straightforward to verify that (D1),(D2), and (D3) together imply that for each $i\in[m]$, $\rotvU{v_i}{\{v_1,\ldots,v_{i-1},v_{i+1},\ldots,v_m\}}{D}=\cycper{v_{i^1},\ldots,v_{i^{|\sigma^+(i)|}},v_{i_1},\ldots,v_{i_{|\sigma_{-}(i)|}}}$.

Thus, $C[\{v_1,v_2,\ldots,v_m\}]$ and $D[\{v_1,v_2,\ldots,v_m\}]$ are drawings of complete graphs on $m$ vertices with the same labels and with identical rotation systems. It follows from~\cite{kynclenumeration} that two edges cross in the former drawing if and only if they cross in the latter drawing. In particular, $e$ (whose endvertices are $v_i$ and $v_j$) and $e'$ (whose endvertices are $v_k$ and $v_\ell$) cross each other in $C$ if and only if they cross each other in $D$.
\end{proof}

\section{Realizable and non-realizable templates}\label{sec:realizable}

As we pointed out in Section~\ref{sub:realizable}, Theorem~\ref{thm:main} fully identifies the unavoidable drawings of $K_n^m$, in the sense that it establishes that they are all canonical. On the other hand, it does not address an essential question: which templates arise from a canonical drawing? That is, which templates are realizable? We now fully answer this question.


\begin{lemma}\label{lemma:realizable}
Let $m,n$ be positive integers. Let $\Gamma=\bigl((1^+,1_{-}),\ldots,(m^+,m_{-})\bigr)$ be a template, where $i^+=\perm{i^1,\ldots,i^{|\sigma^+(i)|}}$ and $i_-=\perm{i_1,\ldots,i_{|\sigma_{-}(i)|}}$ for each $i\in[m]$. Then there is a canonical drawing of $K_n^m$ with template $\Gamma$ if and only if $\{\cycper{1^+\cdot 1_-},\ldots,\cycper{m^+\cdot m_-}\}$ is the rotation system of a drawing of the complete graph $K_m$ with the vertices labelled $1,\ldots,m$.
\end{lemma}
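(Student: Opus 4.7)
My plan is to prove the two directions separately. For the forward direction (necessity), suppose $C$ is a canonical drawing of $K_n^m$ with template $\Gamma$. I would pick any representative $v_i\in\bom{i}$ for each $i\in[m]$ and consider the drawing $C^*:=C[\{v_1,\ldots,v_m\}]$ of $K_m$ on the labelled vertices $v_1,\ldots,v_m$. The calculation already carried out inside the proof of Lemma~\ref{lem:templates1} (in the case where the two edges under consideration span four distinct partite classes) shows that properties (C1), (C2), and (C3) together force the rotation at $v_i$ in $C^*$ to equal $\cycper{v_{i^1},\ldots,v_{i^{|\sigma^+(i)|}},v_{i_1},\ldots,v_{i_{|\sigma_-(i)|}}}$, which after the relabeling $v_j\mapsto j$ is precisely $\cycper{i^+\cdot i_-}$. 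Hence $C^*$ itself witnesses that $\{\cycper{1^+\cdot 1_-},\ldots,\cycper{m^+\cdot m_-}\}$ is the rotation system of some drawing of $K_m$, which establishes the ``only if'' direction.

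For the reverse direction (sufficiency), the plan is to construct a canonical drawing $C$ of $K_n^m$ with template $\Gamma$ by a controlled blow-up of a given drawing $D$ of $K_m$ having the prescribed rotation system. First, I surround each vertex $i$ of $D$ with a small closed disk $\Delta_i$ (the disks pairwise disjoint) such that the exit points of the edges of $D$ through $\partial\Delta_i$ appear in the cyclic order $i^1,\ldots,i^{|\sigma^+(i)|},i_1,\ldots,i_{|\sigma_-(i)|}$; this is possible precisely because the rotation at $i$ in $D$ is $\cycper{i^+\cdot i_-}$. Inside each $\Delta_i$ I pick an arc $\alpha_i$ through $i$ that splits $\partial\Delta_i$ into an upper sub-arc containing exactly the exit points to $\sigma^+(i)$ (in the order given by $i^+$) and a lower sub-arc containing exactly the exit points to $\sigma_-(i)$ (in the order given by $i_-$), and place the $n$ vertices $i(1),\ldots,i(n)$ of $\bom{i}$ along $\alpha_i$ in that order. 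Each edge $ij$ of $D$ is then replaced by a thin ribbon $R_{ij}$ hugging the trace of $ij$, and inside $R_{ij}$ I draw the $n^2$ edges of $\edges{\bom{i}}{\bom{j}}$ as a Negami-style \outer drawing of $K_{n,n}$; working on $\sphere$ allows me to twist the ribbon freely, and by the right choice of twist and sides of attachment this can be tuned so that $C[\edges{\bom{i}}{\bom{j}}]$ becomes exactly a $\cycper{\sigma(j,i)\,\bom{i}\cdot\sigma(i,j)\,\bom{j}}$-\outerd (one of the four patterns of Figure~\ref{fig:445}).

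Finally, I would verify the three defining properties of a canonical drawing. Property (C1) follows because the union of the upper half of $\Delta_i$ with the ribbons $R_{i,i^1},\ldots,R_{i,i^{|\sigma^+(i)|}}$ and the relevant halves of $\Delta_{i^1},\ldots,\Delta_{i^{|\sigma^+(i)|}}$ is a topological disk on $\sphere$ whose boundary visits the vertices in the cyclic order $\cycper{\bom{i}\cdot\sigma(i,i^1)\,\bom{i^1}\cdot\,\cdots\,\cdot\sigma(i,i^{|\sigma^+(i)|})\,\bom{i^{|\sigma^+(i)|}}}$; property (C2) is verified symmetrically using the lower half; and property (C3) holds because positive-neighbor and negative-neighbor edges exit $\Delta_i$ on opposite sides of $\alpha_i$ and the ribbons $R_{ij}$ and $R_{ik}$ for distinct $j,k$ are disjoint outside the disks (two edges of $D$ sharing endpoint $i$ do not cross in $D$). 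The main obstacle is the geometric bookkeeping in the ribbon step: simultaneously realizing the correct sign pattern $(\sigma(i,j),\sigma(j,i))$ inside every ribbon requires orchestrating the orientations of $\alpha_i$ and $\alpha_j$ together with the twist of $R_{ij}$ in a consistent way. This is a standard but fiddly construction, made possible by the unrestricted freedom to twist ribbons on the sphere.
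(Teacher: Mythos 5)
Your proposal is correct and follows essentially the same route as the paper: the ``only if'' direction picks one representative per class and reads off the rotation system of the induced $K_m$ (exactly as in the paper, which reuses the rotation computation underlying Lemma~\ref{lem:templates1}), and the ``if'' direction is the paper's blow-up construction, with your disks $\Delta_i$ and ribbons $R_{ij}$ playing the role of the paper's axis-parallel rectangles and thick segments. The ``fiddly'' sign/twist bookkeeping you flag is handled in the paper at the same informal, figure-driven level of detail, so there is no gap relative to the published argument.
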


\begin{proof}
To illustrate the ``if'' part we refer the reader to Figures~\ref{fig:180} and~\ref{fig:205}. These figures illustrate how to construct a canonical drawing of $K_3^5$ from the template $\Gamma_5=\bigl((1^+,1_-),\ldots,(5^+,5_-)\bigr)$, where $1^+=\perm{3,2},1_-=\perm{4,5}$, $2^+=\perm{}$, $2_-=\perm{3,1,5,4}$,  $3^+=\perm{1,2}$, $3_-=\perm{5,4}$, $4^+=\perm{2}$, $4_-=\perm{3,5,1}$, $5^+=\perm{3,1,4}$, and $5_-=\perm{2}$.

\def\te#1{{\Scale[4.2]{#1}}}
\begin{figure}[ht!]
\centering
\scalebox{0.2}{\input{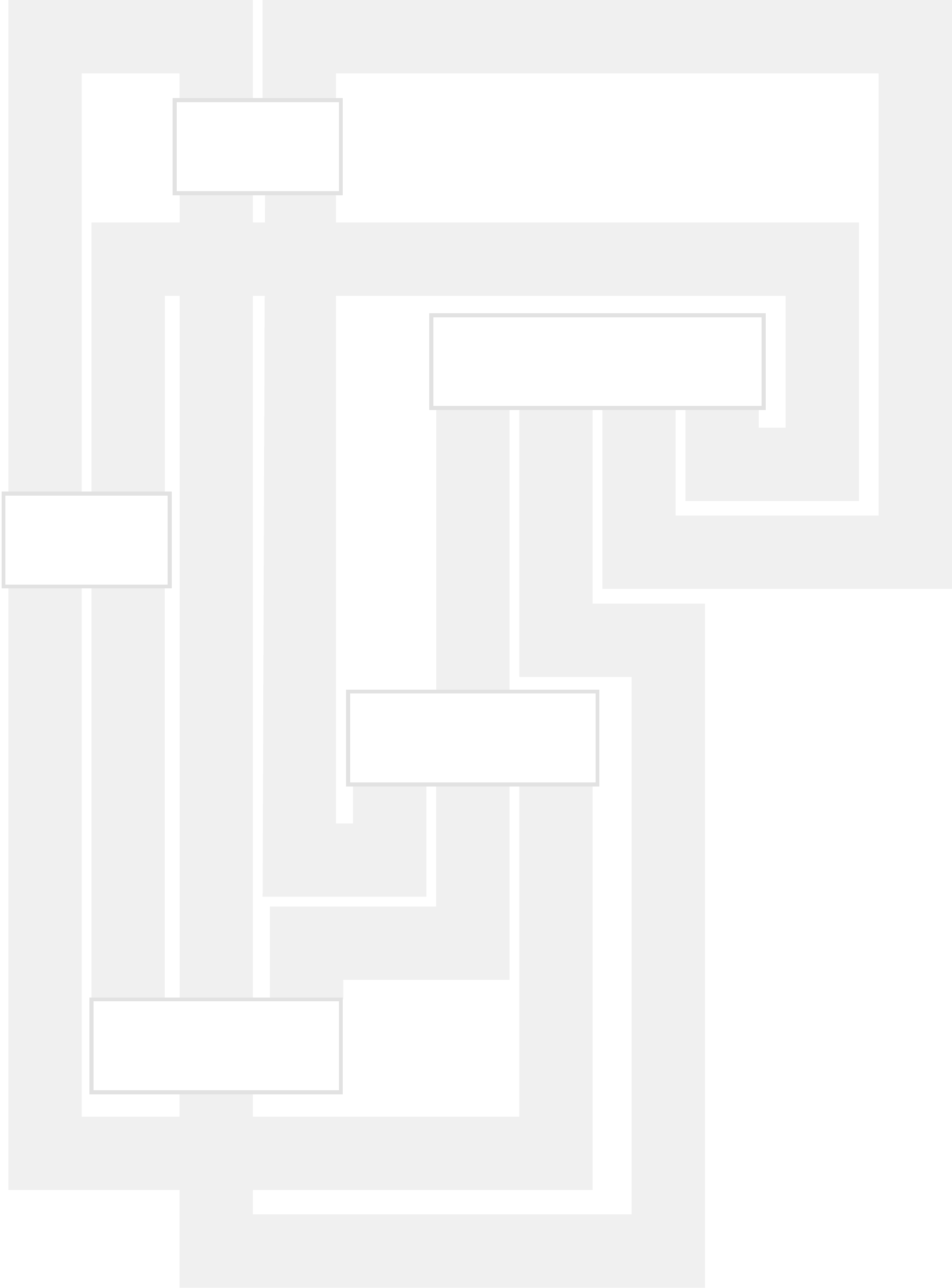_t}}
\caption{A drawing of $K_5$ where the vertices are represented by rectangles and the edges are thick segments that join the rectangles. This drawing serves as a roadmap for the canonical drawing of $K_3^5$ in Figure~\ref{fig:180}.}
\label{fig:205}
\end{figure}

Note that $\{\cycper{1^+\cdot 1_-},\ldots,\cycper{5^+\cdot 5_-}\}$ is indeed the rotation system of a drawing of the complete graph $K_5$ with the vertices labelled $1,\ldots,5$: the drawing in Figure~\ref{fig:205} has precisely this rotation system. The reason to draw $K_5$ in such an unconventional way will be clear in a moment.

In general, suppose we are given a template $\Gamma=\bigl((1^+,1_{-}),\ldots,(m^+,$ $m_{-})\bigr)$, where $i^+=\perm{i^1,\ldots,i^{|\sigma^+(i)|}}$ and $i_-=\perm{i_1,\ldots,i_{|\sigma_{-}(i)|}}$ for each $i\in[m]$, and $\{\cycper{1^+\cdot 1_-},\ldots,\cycper{m^+\cdot m_-}\}$ is the rotation system of a drawing of the complete graph $K_m$ with the vertices labelled $1,\ldots,m$.

To obtain a canonical drawing of $K_n^m$ with template $\Gamma$ we first construct a drawing of $K_m$ (such as the one in Figure~\ref{fig:205}) in which the vertices are represented as axis-parallel rectangles, and the edges are represented by thick segments. The edges (segments) incident with the $i$-th rectangle $R_i$ leave $R_i$ according to the following rules. First, the segments that leave to rectangles $R_{i^1},\ldots,R_{i^{|\sigma^+(i)|}}$ intersect $R_i$ on its top side, and the intersections occur in this order from left to right. Finally, the segments that leave to rectangles $R_{i_1},\ldots,R_{i_{|\sigma_{-}(i)|}}$ intersect  $R_i$ on its bottom side, and the intersections occur in this order from right to left. 

We refer the reader again to Figure~\ref{fig:205}, where we illustrate a drawing of $K_5$ based on the template $\Gamma_5$ following these rules.

For each $i\in [m]$ we let $\bom{i}$ be the partite class (permutation) $\perm{i(1),\ldots,i(n)}$, and place these vertices horizontally inside $R_i$, in the order $i(n),\ldots,i(1)$ from left to right. See Figure~\ref{fig:180} for an illustration of this process for the drawing of $K_3^5$ obtained from the template $\Gamma_5$.

Finally, for each distinct $i,j\in[m]$ we route the edges in $\edges{\bom{i}}{\bom{j}}$ through the segment that joins rectangles $R_i$ and $R_j$, as shown in Figure~\ref{fig:180}.

At the end of this process we obtain a canonical drawing of $K_n^m$ with template $\Gamma$.

Regarding the ``only if'' part, suppose that $C$ is a canonical drawing of $K_n^m$ with template $\Gamma=\bigl((1^+,1_{-}),\ldots,(m^+,m_{-})\bigr)$. For each $i\in [m]$, choose any vertex in partite class $\bom{i}$ and label it simply with $i$. 

Thus, $C[\{1,\ldots,m\}]$ is a drawing of a complete graph $K_m$ with the vertices labelled $1,\ldots,m$. For each $i\in[m]$, the rotation at vertex $i$ is $\cycper{i^+\cdot i_-}$. Thus, $\{\cycper{1^+\cdot 1_-},\ldots,\cycper{m^+\cdot m_-}\}$ is the rotation system of a drawing of the complete graph $K_m$ with the vertices labelled $1,\ldots,m$.
\end{proof}


\section*{Acknowledgments}

The first author is supported in part by NSF grants DMS-1764123 and RTG DMS-1937241, FRG DMS-2152488, the Arnold O. Beckman Research Award (UIUC Campus Research Board RB 24012), and  by IBS-R029-C4.


\bibliographystyle{abbrv}
\bibliography{refs.bib}


\end{document}